\newcounter{claim-counter}
\theoremstyle{plain}
\newtheorem{thm}{Theorem}[section]
\newtheorem{lem}[thm]{Lemma}
\newtheorem{lemma}[thm]{Lemma}
\newtheorem{prop-defi}[thm]{Definition \& Proposition}
\newtheorem{prop}[thm]{Proposition}
\newtheorem{por}[thm]{Porism}
\newtheorem*{thm*}{Theorem}
\newtheorem*{prop*}{Proposition}
\newtheorem*{cor*}{Corollary}
\newtheorem{proposition}[thm]{Proposition}
\declaretheorem[style=theorem,name={Theorem}]{theoremletter}
\theoremstyle{definition}
\newtheorem{defi}[thm]{Definition}
\newtheorem{definition}[thm]{Definition}
\newtheorem{ex}[thm]{Example}
\newtheorem{rem}[thm]{Remark}
\newtheorem{remark}[thm]{Remark}
\newtheorem{claim}[claim-counter]{Claim}
\newtheorem*{claim*}{Claim}
\newcommand{\NN}{{\mathbb N}}
\newcommand{\ZZ}{{\mathbb Z}}
\newcommand{\RR}{{\mathbb R}}
\newcommand{\CC}{{\mathbb C}}
\newcommand{\Z}{{\mathcal Z}}
\renewcommand{\H}{\mathcal{H}}
\newcommand{\R}{{\mathcal R}}
\newcommand{\C}{{\mathscr C}}
\newcommand{\B}{{\mathscr B}}
\renewcommand{\max}{{\operatorname{max}}}
\newcommand{\ip}[2]{\left\langle {#1}\hspace{0.05cm}, \hspace{0.05cm}{#2}\right \rangle}
\newcommand{\varps}{{\varepsilon}}
\newcommand{\To}{\longrightarrow}
\newcommand{\id}{\operatorname{id}}
\renewcommand{\leq}{\leqslant}
\renewcommand{\geq}{\geqslant}
\newcommand{\bbb}{{\mathbbm{1}}}
\newcommand{\covol}{{\operatorname{covol}}}
\renewcommand{\S}{{\mathcal{S}}}
\renewcommand{\restriction}{\rvert}
\title[Measure equivalence and coarse equivalence for  unimodular lc groups]{Measure equivalence and coarse equivalence for unimodular locally compact  groups}
\author{Juhani Koivisto, David Kyed and Sven Raum}
\address{Juhani Koivisto, Department of Mathematics and Computer Science, University of Southern Denmark, Campusvej 55, DK-5230 Odense M, Denmark}
\email{koivisto@imada.sdu.dk}
\address{David Kyed, Department of Mathematics and Computer Science, University of Southern Denmark, Campusvej 55, DK-5230 Odense M, Denmark}
\email{dkyed@imada.sdu.dk}
\address{Sven Raum,
Department of Mathematics, Stockholm University, SE-106 91 Stockholm, Sweden}
\email{raum@math.su.se}
\subjclass[2010]{20F65, 22D05, 57M07}
\keywords{Locally compact groups, amenability, measure equivalence, coarse equivalence, quasi-isometry}
\begin{document}

\begin{abstract}
This article is concerned with measure equivalence and uniform measure equivalence of locally compact, second countable groups. We show that two unimodular, locally compact, second countable groups are measure equivalent if and only if they admit free, ergodic, probability measure preserving actions whose cross section equivalence relations are stably orbit equivalent.  Using this we  prove that in the presence of amenability any two  such groups are measure equivalent and that both amenability and property (T) are preserved under measure equivalence,  extending results of Connes-Feldman-Weiss and Furman. Furthermore, we introduce a notion of uniform measure equivalence for  unimodular, locally compact, second countable groups,  and prove that under the additional assumption of amenability this notion coincides with coarse equivalence, generalizing results of Shalom and Sauer. Throughout the article we rigorously treat measure theoretic issues arising in the setting of non-discrete groups. 
\end{abstract}

\maketitle

\section{Introduction}

Measure equivalence for countable discrete groups was originally introduced by Gromov \cite{gromov-asymptotic-invariants} as a measurable analogue of quasi-isometry and has since then proven to be an important tool in geometric group theory with connections to ergodic theory and operator algebras. Notably, measure equivalence was used by Furman in \cite{furman-gromovs-measure-equivalence, Furman-OE-rigidity} to prove strong rigidity results for lattices in higher rank simple Lie groups,  {and continuing this line of investigation}, Bader, Furman and Sauer \cite{BFS-integrable} introduced measure equivalence in the setting of unimodular\footnote{See also \cite{deprez-li-ME}  and the \cite{KKR18} for the non-unimodular case.}, locally compact, second countable groups.
The first aim of the present paper is to establish a rigorous understanding of this notion of measure equivalence and its relationship to other established notions of equivalence between locally compact, second countable groups, such as orbit equivalence of probability measure preserving actions and stable orbit equivalence of cross section equivalence relations associated with such actions.  We obtain proofs of results that might be considered folklore by parts of the community, but the present piece of work has the virtue of being largely self-contained and furthermore offers a careful treatment of the subtle measure theoretic issues arising  from the fact that the saturation of a null set (respectively Borel set) with respect to a non-discrete locally compact group is not necessarily null (respectively Borel).  It is precisely these measurability issues  that form the gap between folklore results and the rigorous treatment offered here.
Our first main theorem establishes an equivalence between the notions of measure equivalence and (stable) orbit equivalence 
within the framework of unimodular, locally compact, second countable groups.
{For discrete groups the equivalence of (i) and (ii) below can be found in \cite[Proposition 1.22]{carderi-maitre}; and the equivalence of (i) and (iii) for discrete groups can be found in \cite[Theorem 2.3]{gaboriau} (see also \cite[Lemma 3.2 and Theorem 3.3]{Furman-OE-rigidity} for related earlier results).
\begin{theoremletter}[Theorem \ref{thm:me-and-soe-equivalence}]  \label{thm:me-oe-soe}
For unimodular, locally compact, second countable groups $G$ and $H$ the following are equivalent.
\begin{itemize}
\item[(i)] $G$ and $H$ are measure equivalent.
\item[(ii)] $G \times \mathrm S^1$ and $H \times \mathrm S^1$ admit orbit equivalent, essentially free, ergodic, probability measure preserving actions on standard Borel probability spaces.
\item[(iii)] $G$ and $H$ admit essentially free, ergodic, probability measure preserving  actions on standard Borel probability spaces whose cross section equivalence relations
  are stably orbit equivalent.
\end{itemize}
\end{theoremletter}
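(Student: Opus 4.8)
The plan is to run the cycle of implications $(\mathrm i)\Rightarrow(\mathrm{iii})\Rightarrow(\mathrm{ii})\Rightarrow(\mathrm i)$. The backbone is the cross section machinery for unimodular locally compact second countable groups, which provides a dictionary between an essentially free, ergodic, probability-measure-preserving (pmp) action of such a group $G$ and its cross section equivalence relation: a cross section $\mathcal{C}$ carries a canonical finite measure, the induced relation $\mathcal{R}_{\mathcal{C}}$ is a countable, ergodic, pmp equivalence relation, and the original action is recovered as the suspension of $\mathcal{R}_{\mathcal{C}}$ along the canonical $G$-valued return cocycle. I would use this dictionary in both directions: to discretise a coupling into a countable relation, and to re-inflate a countable relation carrying a $G$-valued cocycle into a pmp $G$-action.

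For $(\mathrm i)\Rightarrow(\mathrm{iii})$ I start from an ergodic measure equivalence coupling $(\Omega,m)$ with its commuting, free, measure preserving actions of $G$ and $H$ and finite-measure fundamental domains. Fixing a Borel fundamental domain $X\subset\Omega$ for $H$, the commuting $G$-action descends to an action on $X\cong\Omega/H$ which, after normalising the finite measure, is an ergodic pmp action; freeness of the joint $G\times H$-action on $\Omega$ guarantees that this induced action is essentially free, and symmetrically for $H\curvearrowright Y\cong\Omega/G$. Choosing cross sections for these two induced actions and realising both as transversals to the single $G\times H$-orbit relation inside $\Omega$, one checks that the cross section relations $\mathcal{R}_X$ and $\mathcal{R}_Y$ are obtained from the common cross section relation of $G\times H\curvearrowright\Omega$ by collapsing the $H$- respectively $G$-direction; comparing the two normalisations exhibits a stable orbit equivalence whose compression constant is the coupling index $m(\Omega/G)/m(\Omega/H)$.

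For $(\mathrm{iii})\Rightarrow(\mathrm{ii})$ I would use the circle factor to turn a stable orbit equivalence of countable relations into a genuine orbit equivalence of the enlarged groups. Given essentially free, ergodic, pmp actions of $G$ and $H$ whose cross section relations are stably orbit equivalent, restrict to a common model $\mathcal{R}:=\mathcal{R}_G|_A\cong\mathcal{R}_H|_B$, which then carries both a $G$-valued cocycle (the return cocycle of $G$) and, transported through the stable orbit equivalence, an $H$-valued cocycle. Suspending $\mathcal{R}$ along $G\times\mathrm S^1$ and along $H\times\mathrm S^1$, with the compact circle serving as the continuous suspension direction, produces essentially free, ergodic, pmp actions of $G\times\mathrm S^1$ and $H\times\mathrm S^1$ on standard probability spaces; since both suspensions are built over the very same relation $\mathcal{R}$ they have identical orbit structure, and the continuous $\mathrm S^1$-direction is precisely what lets the compression constant be renormalised to $1$, upgrading the stable orbit equivalence to an honest orbit equivalence.

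Finally $(\mathrm{ii})\Rightarrow(\mathrm i)$ is the soft step: orbit equivalent pmp actions always produce a measure equivalence coupling of the acting groups, so $G\times\mathrm S^1$ and $H\times\mathrm S^1$ are measure equivalent; as $\mathrm S^1$ is compact it is measure equivalent to the trivial group, whence $G\times\mathrm S^1$ is measure equivalent to $G$ and likewise for $H$, and transitivity of measure equivalence yields $(\mathrm i)$. I expect the main obstacle to lie in $(\mathrm i)\Rightarrow(\mathrm{iii})$, namely the rigorous treatment of cross sections for non-discrete group actions: verifying that the relevant transversals, return cocycles and disintegration measures are genuinely Borel and behave correctly under the identifications, all while contending with the failure, emphasised in the introduction, of saturations of Borel (respectively null) sets to remain Borel (respectively null). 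Establishing essential freeness of the induced actions $G\curvearrowright\Omega/H$ is a further point demanding care, ultimately guaranteed by freeness of the joint action on the coupling, while the role of the circle factor throughout is to convert stable orbit equivalence into genuine orbit equivalence.
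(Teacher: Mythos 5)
Your cycle $(\mathrm i)\Rightarrow(\mathrm{iii})\Rightarrow(\mathrm{ii})\Rightarrow(\mathrm i)$ and the machinery behind it coincide with the paper's proof of Theorem \ref{thm:me-and-soe-equivalence}: the paper likewise discretises a free, ergodic, strict coupling (Proposition \ref{prop:free-ergodic-coupling}) by viewing $j(\{e_H\}\times X_0)$ and $i(\{e_G\}\times Y_0)$ as two cross sections of the single $G\times H$-action on $\Omega$ and comparing them (Lemma \ref{lem:ME-implies-SOE-cross}), uses the $\mathrm S^1$-factor to make both groups non-discrete so that stable orbit equivalence at the level of cross sections upgrades to genuine orbit equivalence of the amplified actions (Lemma \ref{lem:SOE-cross-eq-implies-OE-actions}), and closes the loop via ``orbit equivalence implies measure equivalence'' (Lemma \ref{oe-implies-me}) together with cocompactness of $G$ in $G\times\mathrm S^1$. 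So the architecture is right.

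The two places you treat as routine are, however, exactly where the paper's work lies. First, in $(\mathrm{iii})\Rightarrow(\mathrm{ii})$ the claim that the two suspensions ``have identical orbit structure'' because they are built over the same relation $\mathcal R$ is not automatic: they live on different spaces, and producing a measure-scaling Borel isomorphism between them that matches orbits is precisely the content of Lemma \ref{lem:SOE-cross-eq-implies-OE-actions}. Its proof chooses precompact identity neighbourhoods $U$ and $V$ in the two (now non-discrete) groups with injective action maps, a measure isomorphism $f\colon U\to V$ for the normalised Haar measures --- this is where the $\mathrm S^1$-factor actually enters --- and then verifies by an explicit covolume computation that $\beta\circ(f\times\Delta)\circ\alpha^{-1}$ carries the normalised measures correctly. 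You correctly identify the role of $\mathrm S^1$ in absorbing the compression constant, but you do not supply this verification. Second, $(\mathrm{ii})\Rightarrow(\mathrm i)$ is not the soft step in the locally compact setting: Lemma \ref{oe-implies-me} is the longest argument of the section, requiring strictification of the orbit-equivalence cocycles, near actions and their Mackey point realisations, and Lemma \ref{lem:invariance} to recognise the transported measure as a product $\lambda_H\times b\nu$; none of this is needed for discrete groups, which is presumably why it reads as soft. Neither point is a wrong turn --- both steps are true and proved in the paper --- but as written your proposal asserts, rather than proves, the two lemmas that carry the theorem.
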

Here, and in what follows, $\mathrm S^1$ denotes the circle group and we refer to Sections \ref{sec:me} and \ref{sec:me-oe} for definitions of the various notions appearing in the statement and for remarks on item (ii).  Note that in case both $G$ and $H$ are  non-discrete, the amplification by $\mathrm S^1$ in statement (ii) is not necessary;  see Theorem \ref{thm:me-and-soe-equivalence} for a precise statement.  A particular instance of the rigorous treatment provided by this work is Theorem \ref{thm:strict-coupling}, worth mentioning in its own right, which shows that one can always pass from a measure equivalence coupling to a strict measure equivalence coupling; i.e.~one where all maps involved are Borel isomorphisms and genuinely equivariant. \\

An important property of measure equivalence for discrete groups is the fact that all countably infinite, amenable discrete groups are pairwise measure equivalent, which follows from the work by Ornstein and Weiss in \cite{ornstein-weiss} as 
proven by Furman in \cite{furman-gromovs-measure-equivalence}.
As a consequence of Theorem \ref{thm:me-oe-soe} we deduce a similar result in the non-discrete setting.

\begin{theoremletter}[Theorems \ref{thm:amenability-ME-invariant} and \ref{thm:amenable-groups-are-ME}]\label{thm:amenable-grps-are-me-intro}
  All non-compact, amenable, unimodular, locally compact, second countable groups are pairwise measure equivalent.  Conversely, 
if $G$ and $H$ are measure equivalent, unimodular, locally compact, second countable groups and one of them is amenable then so is the other.
\end{theoremletter}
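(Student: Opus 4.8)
The two halves of the statement pull in opposite directions and should be handled by different machinery. The backward direction (amenability is a measure-equivalence invariant) is the one I would attack via Theorem~A, while the forward direction (all non-compact amenable groups are mutually measure equivalent) is the one I expect to be genuinely hard and would attack via cross sections and the Ornstein--Weiss theorem.

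\emph{Amenability is preserved.} Suppose $G$ and $H$ are measure equivalent and $G$ is amenable; I want to conclude $H$ is amenable. By Theorem~A, $G$ and $H$ admit essentially free, ergodic, p.m.p.\ actions whose cross section equivalence relations $\R_G$ and $\R_H$ are stably orbit equivalent. The key point is that amenability of a unimodular, locally compact, second countable group should be equivalent to amenability (in the sense of Zimmer) of the cross section equivalence relation of any of its free p.m.p.\ actions; this is the natural locally-compact analogue of the discrete dictionary between group amenability and orbit-equivalence-relation amenability, and I would isolate it as a lemma. Granting it, amenability of $\R_G$ transfers to $\R_H$ because amenability of a measured equivalence relation is a stable-orbit-equivalence invariant (it is preserved under restriction to a positive-measure subset and under amplification), and then amenability of $\R_H$ forces amenability of $H$. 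This direction is essentially a bookkeeping argument once the group/relation amenability dictionary is in place.

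\emph{All such groups are mutually measure equivalent.} For the forward direction I would again route everything through cross sections and Theorem~A, reducing the claim to: any two cross section equivalence relations coming from free, ergodic, p.m.p.\ actions of non-compact amenable unimodular groups are stably orbit equivalent. The cross section relations $\R_G, \R_H$ are ergodic, measured equivalence relations that are amenable (by the dictionary above) and of type $\mathrm{II}_1$ after normalizing, and the decisive input is the Connes--Feldman--Weiss theorem: an amenable ergodic measured equivalence relation is hyperfinite, hence all such relations of the same type are isomorphic up to stable orbit equivalence (uniqueness of the amenable type $\mathrm{II}_1$ relation). This uniqueness gives a stable orbit equivalence between $\R_G$ and $\R_H$, and Theorem~A then upgrades it to a measure equivalence between $G$ and $H$.

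\emph{Where the difficulty lies.} The main obstacle is establishing the group-to-relation amenability dictionary in the non-discrete setting, together with verifying that the cross section relation of an amenable group action is genuinely amenable and hyperfinite. In the discrete case this is classical, but here one must contend precisely with the measure-theoretic subtleties flagged in the introduction --- the saturation of a Borel or null set need not be Borel or null --- so I expect the real work to be in checking that the cross section construction interacts well with amenability and that Connes--Feldman--Weiss applies to the resulting relation. The actual transfer arguments (invariance under restriction and amplification, and the final appeal to Theorem~A) are then comparatively routine.
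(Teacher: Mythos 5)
Your proposal is correct and follows essentially the same route as the paper: the forward direction goes through cross sections, the amenability dictionary between a group and its cross section equivalence relation (Proposition \ref{prop:cross-section}(4), quoted from \cite{KPV}), the Connes--Feldman--Weiss/Dye uniqueness of the ergodic amenable relation, and then Theorem \ref{thm:me-and-soe-equivalence}; the backward direction uses Theorem \ref{thm:me-and-soe-equivalence} to produce stably orbit equivalent cross section relations and transfers amenability through the same dictionary. The only cosmetic difference is that the paper dispatches the discrete case separately via Ornstein--Weiss and the coupling $\ZZ\sim\RR$ before running the cross-section argument for non-discrete groups, whereas your argument treats all cases uniformly.
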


Further, we clarify the role of measure equivalence  in connection with property (T) by extending \cite[Corollary 1.4]{furman-gromovs-measure-equivalence} to the locally compact case, again carefully treating measure theoretic issues, by passing to the aforementioned strict coupling provided by Theorem \ref{thm:strict-coupling}.
\begin{theoremletter}\label{thm:me-and-property-T}
If $G$ and $H$ are measure equivalent, unimodular, locally compact, second countable groups and one has property \emph{(T)} then so does the other.
\end{theoremletter}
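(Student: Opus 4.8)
The plan is to use the symmetry of measure equivalence and prove the implication in one direction, say assuming that $H$ has property (T) and deducing the same for $G$; by the Delorme--Guichardet theorem I may freely switch between (T) and the existence of almost invariant vectors. The decisive preliminary step is to replace the given coupling by a \emph{strict} coupling as provided by Theorem \ref{thm:strict-coupling}: this yields a standard Borel space $(\Omega,m)$ with a $\sigma$-finite measure carrying genuinely (not merely almost everywhere) commuting, free, measure preserving actions of $G$ and $H$, together with honest Borel fundamental domains of finite measure. Writing $X$ for a fundamental domain for the $G$-action, the identification $\Omega/G\cong X$ equips $X$ with an ergodic, probability measure preserving $H$-action and a genuinely defined Borel cocycle $\alpha\colon H\times X\to G$, where unimodularity of $G$ and $H$ is exactly what guarantees that this action is measure preserving and that no modular corrections enter. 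The point of the strict coupling is that all subsequent manipulations with $\alpha$ and with fields over $X$ and over $\Omega$ become literal pointwise identities rather than almost-everywhere statements, which is precisely what circumvents the measurability pathologies emphasised throughout the paper.

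Next I would set up the induction of unitary representations. Given a unitary representation $\pi\colon G\to\mathcal U(\mathcal K)$, define the induced representation $\tilde\pi$ of $H$ on $L^2(X,m;\mathcal K)$ by twisting the translation action of $H$ on $X$ by $\pi\circ\alpha$; unitarity is immediate since the $H$-action preserves $m$. The first substantial claim is that induction transports almost invariant vectors: testing $\tilde\pi$ against the constant fields $x\mapsto v$ and estimating $\int_X\|\pi(\alpha(h,x))v-v\|^2\,dm(x)$ shows that almost invariance of $v$ propagates to $\tilde\pi$. The only care required is that, although $\alpha$ need not be compactly valued, for each compact $Q\subseteq H$ and each $\varepsilon>0$ a Lusin-type argument produces a compact $C\subseteq G$ and a subset $X'\subseteq X$ with $m(X\setminus X')<\varepsilon$ on which $\alpha(Q\times X')\subseteq C$; finiteness of $m$ then lets one split the integral and conclude. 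Invoking property (T) of $H$ now furnishes a nonzero $\tilde\pi$-invariant vector $\eta\in L^2(X,m;\mathcal K)$.

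It remains to descend from $\eta$ to an invariant vector of $\pi$. After unwinding the cocycle, invariance of $\eta$ is an $\alpha$-equivariance of the field $\eta$, and since $\|\eta(\cdot)\|$ is $H$-invariant it is essentially constant and nonzero by ergodicity of the coupling. Using freeness and the strict structure one extends $\eta$ to a measurable field $\hat\eta\colon\Omega\to\mathcal K$ that is genuinely $G$-equivariant, $\hat\eta(g\omega)=\pi(g)\hat\eta(\omega)$, and $H$-invariant; restricting $\hat\eta$ to a fundamental domain $Y$ for the $H$-action and using that $G$ acts on $Y\cong\Omega/H$ in a measure preserving, ergodic fashion yields a unit field $\zeta\colon Y\to\mathcal K$ satisfying the \emph{untwisted} equivariance $\zeta(g\cdot y)=\pi(g)\zeta(y)$. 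Averaging the associated rank-one field produces a nonzero, positive, $\pi(G)$-invariant trace class operator $T=\int_Y|\zeta(y)\rangle\langle\zeta(y)|\,dm(y)$, and hence a nonzero finite-dimensional $\pi(G)$-invariant subspace.

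The main obstacle is exactly this last step: upgrading the invariant finite-dimensional subspace to a genuine nonzero invariant \emph{vector} for $\pi$, equivalently ruling out that the averaged vector $\int_Y\zeta\,dm$ cancels and that the only invariant data is a nontrivial finite-dimensional subrepresentation. I expect to resolve this by analysing the $G$-invariant kernel $(y,y')\mapsto\langle\zeta(y),\zeta(y')\rangle$ on $Y\times Y$: the identity $\Tr(T^2)=\int_{Y\times Y}|\langle\zeta(y),\zeta(y')\rangle|^2\,dm\,dm>0$ forces this kernel not to vanish identically, and a weak-mixing-type analysis of the $G$-action on $Y$, combined once more with property (T) of $H$, then pins down a nonzero $\pi(G)$-invariant vector. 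Throughout, the genuine rather than almost-everywhere equivariance afforded by the strict coupling of Theorem \ref{thm:strict-coupling} is what makes the passages between fields on $X$, on $\Omega$, and on $Y$ rigorous, and it is in controlling these measure theoretic passages that the real work lies.
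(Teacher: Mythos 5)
Your overall route coincides with the paper's: pass to a strict coupling via Theorem \ref{thm:strict-coupling}, induce the representation through the (strict, locally bounded) cocycle, transport almost invariant vectors using finiteness of the base measure, invoke property (T) to get an invariant vector of the induced representation, and descend it through the coupling to an equivariant field $\zeta\colon Y\to\mathcal K$ with $\zeta(g.y)=\pi(g)\zeta(y)$, i.e.\ an invariant vector for $\lambda_Y\otimes\pi$. Up to that point the argument is sound (one small caveat: the coupling is not assumed ergodic, so $\|\zeta(\cdot)\|$ need not be essentially constant; the paper instead restricts to the non-negligible invariant set $\{\delta<\|\zeta\|<M\}$, and you would need either that device or to first pass to an ergodic coupling via Proposition \ref{prop:free-ergodic-coupling}).

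The genuine gap is the final step, which you yourself flag as the main obstacle. An invariant vector for $\lambda_Y\otimes\pi$ only tells you (via \cite[Proposition A.1.12]{BHV}) that $\pi$ contains a nonzero \emph{finite-dimensional} subrepresentation; it cannot yield an invariant vector for $\pi$, and no weak-mixing analysis of the $G$-action on $Y$ will change that --- consider $\pi=\sigma\oplus\pi'$ with $\sigma$ a fixed nontrivial finite-dimensional representation and $\pi'$ almost having but not having invariant vectors. The standard and necessary fix, which is exactly what the paper does, is to run the argument in contrapositive form: if the target group lacked property (T), it would admit (by \cite[Remark 2.12.11]{BHV}) a representation with almost invariant vectors and \emph{no} nonzero finite-dimensional subrepresentations, and the induction--descent machinery then manufactures such a subrepresentation, a contradiction. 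Equivalently, you could keep your direct formulation but finish by citing the characterization of property (T) as ``every representation almost having invariant vectors contains a nonzero finite-dimensional subrepresentation''; either way, the step ``pin down a nonzero $\pi(G)$-invariant vector'' must be abandoned, not repaired.
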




Uniform measure equivalence for finitely generated, countable discrete groups was introduced by Shalom in \cite{shalom-rigidity}, combining the notions of measure equivalence and quasi-isometry, with the purpose of capturing the important situation of two cocompact, finitely generated lattices in a common Lie group.  
With  the notion of measure equivalence extended to the class of unimodular, locally compact, second countable groups, it is natural to also extend uniform measure equivalence to this setting, in such a way that if two such groups are closed, cocompact subgroups of the same locally compact, second countable group then they are uniformly measure equivalent.  We introduce such a notion in Definition \ref{def:UME},  show that the situation just described indeed gives rise to uniformly measure equivalent groups in Proposition \ref{prop:cocompact-ume} and prove that our definition reduces to the existing one \cite[Definition 2.23]{sauer-thesis}\footnote{In \cite{sauer-thesis} the term \emph{bounded} measure equivalence is used.} in the case of finitely generated, discrete groups.
For finitely generated, discrete, amenable groups it was shown by  Shalom \cite{shalom-harmonic-analysis} and Sauer \cite{sauer-thesis}
that uniform measure equivalence coincides with quasi-isometry {(equivalently coarse equivalence; cf.~\cite[Corollary 4.7]{yves-survey})} and we show that this result also carries over to the locally compact setting by means of the following theorem.

\begin{theoremletter}[Theorem \ref{thm:uniform-me-amenable-groups}]\label{thm:ume-equals-ce-for-amenable}
Two amenable, unimodular, locally compact, second countable groups are uniformly measure equivalent if and only if they are coarsely equivalent. 
\end{theoremletter}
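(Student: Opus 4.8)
The plan is to prove the two implications separately, with amenability entering only in the passage from coarse equivalence to uniform measure equivalence; throughout, coarse equivalence is understood with respect to the canonical coarse structure generated by the relatively compact subsets. For the implication from uniform measure equivalence to coarse equivalence I would start from a coupling $(\Omega, m)$ as in Definition \ref{def:UME}, namely commuting measure-preserving actions of $G$ and $H$ on $\Omega$ with finite-measure fundamental domains $X$ for the $G$-action and $Y$ for the $H$-action, subject to the uniformity constraint distinguishing uniform measure equivalence from plain measure equivalence. From these data I extract the exchange map between the two fundamental domains and the associated measure equivalence cocycle; restricting it to a fixed relatively compact neighbourhood of the identity and reading it through the identifications $X \cong \Omega/H$ and $Y \cong \Omega/G$ produces a map $G \to H$ together with a companion map $H \to G$. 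The uniformity built into Definition \ref{def:UME} is precisely what guarantees that these maps send relatively compact sets to relatively compact sets in a controlled fashion, hence are coarse; finiteness of the fundamental domains makes the images coarsely dense, and the two maps are mutually coarse inverse. This direction uses no amenability.

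For the converse, the first step is to manufacture a \emph{topological coupling} out of a coarse equivalence $f \colon G \to H$: a locally compact, second countable space $\Omega$ carrying commuting, continuous, proper, cocompact actions of $G$ and $H$ whose induced coarse structure reproduces $f$. This is the locally compact analogue of Gromov's dictionary between quasi-isometries and topological couplings, already exploited in the cocompact case of Proposition \ref{prop:cocompact-ume}. I would realise $\Omega$ as a suitable completion of the graph of $f$, using that $f$ is controlled to ensure that the induced $H$-action is proper and cocompact, and symmetrically for the $G$-action.

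It then remains to equip this topological coupling with a Radon measure $m$ that is invariant under both actions and finite on the fundamental domains, which yields a uniform measure equivalence coupling, the uniformity being inherited from the coarse control of the previous step. Since $G$ is unimodular and acts properly with compact stabilisers, a $G$-invariant Radon measure $\mu$ exists, obtained by integrating the Haar measure against a Bruhat cutoff function over the compact quotient $\Omega/G$. In general $\mu$ fails to be $H$-invariant, but properness and cocompactness render the family of translates $\{ h_* \mu : h \in H \}$ uniformly comparable on the compact orbit spaces. \emph{Here amenability of $H$ enters}: averaging this bounded family along a F{\o}lner net, equivalently applying an invariant mean, yields a finitely additive $H$-invariant functional, which weak-$*$ compactness of the probability measures on the compact quotient $\Omega/H$ lets me realise as a genuine $H$-invariant Radon measure still invariant under $G$.

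I expect the averaging step to be the main obstacle, precisely because of the measure-theoretic delicacy stressed throughout the paper: one must check that the mean-averaged invariant functional is an honest Radon measure, finite on the fundamental domains, and that the failure of saturations of Borel or null sets to remain Borel or null does not obstruct either the joint invariance or the identification of fundamental domains. A secondary difficulty is constructing the topological coupling with genuinely proper, cocompact actions from a mere coarse, rather than quasi-isometric, equivalence between groups that need not be compactly generated a priori, which forces one to argue consistently with the coarse structure generated by relatively compact sets.
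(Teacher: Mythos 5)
Your overall architecture matches the paper's: the implication from uniform measure equivalence to coarse equivalence is amenability-free (the paper does this in Proposition \ref{prop:UME-implies-QI} via the set-theoretic criterion of Lemma \ref{lem:set-coupling-to-qi}, applied to $Z=i(\{e_G\}\times Y)\cup j(\{e_H\}\times X)$, which is essentially your ``exchange map'' idea), and the converse passes through a topological coupling and uses amenability to produce an invariant measure. However, your execution of the amenability step contains a genuine confusion, and several technical ingredients that the paper needs are missing. First, the averaging argument is aimed at the wrong quotient: $H$ acts trivially on $\Omega/H$, so ``weak-$*$ compactness of the probability measures on the compact quotient $\Omega/H$'' cannot produce the invariant object you want. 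The space on which $H$ acts continuously and whose compactness you should exploit is $\Omega/G$; the paper's route is to apply the fixed-point characterisation of amenability directly to $H\curvearrowright \Omega/G$ to get an $H$-invariant probability measure $\nu$ there, and then to define $\eta:=i_*(\lambda_G\times\nu)$, whose $G\times H$-invariance follows from unimodularity of $G$ and $H$-invariance of $\nu$ via the explicit cocycle form of the $H$-action on $G\times Y$. Your plan of averaging the translates $h_*\mu$ of an infinite $G$-invariant Radon measure on $\Omega$ along a F{\o}lner net can be salvaged only by first disintegrating over $\Omega/G$, at which point it reduces to the paper's argument; as stated it does not close.

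Second, three steps you gloss over are genuinely needed for the statement to make sense in the framework of Definitions \ref{def:me} and \ref{def:UME}. (a) The maps $(g,y)\mapsto g.s_Y(y)$ and $(h,x)\mapsto h.s_X(x)$ are Borel \emph{isomorphisms} only if the $G\times H$-action on $\Omega$ is free; the paper arranges this by amplifying the Bader--Rosendal coupling with a free action of $G\times H$ on a compact metrizable space \cite{adams-stuck}. (b) The topological coupling must be second countable to stay within standard Borel spaces; this is not automatic from \cite{bader-rosendal} and is the content of Lemma \ref{lem:sc-coupling}. Relatedly, constructing the coupling as a ``completion of the graph of $f$'' is precisely the nontrivial content of the Bader--Rosendal theorem, and deferring it is fine only if you cite it. (c) It is not enough to produce a doubly invariant measure finite on fundamental domains: Definition \ref{def:me} requires that under $j$ the measure be the \emph{product} $\lambda_H\times\mu$ for a finite measure $\mu$ on $X$. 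The paper verifies this by showing that $B\mapsto (j^{-1})_*\eta(\,\cdot\times B)$ is a left-invariant measure on $H$, hence a multiple $c_B\lambda_H$ of Haar measure, and that $B\mapsto c_B$ is a finite measure (here properness of $j^{-1}\circ i$ is used to get local finiteness). Without this step, or an appeal to Lemma \ref{lem:invariance}, you have not produced a measure equivalence coupling in the sense of the paper.
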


In addition to the introduction, this article has five sections. 
In Section \ref{sec:me} we introduce measure equivalence after Bader-Furman-Sauer \cite{BFS-integrable}, paying particular attention to measure theoretic aspects, and in Section \ref{sec:me-oe} we prove Theorem \ref{thm:me-oe-soe}.  In Section \ref{sec:me-amenable-groups} we use Theorem~\ref{thm:me-oe-soe} in order to show how amenability behaves with respect to measure equivalence as stated in Theorem \ref{thm:amenable-grps-are-me-intro}, likewise in Section \ref{sec:T} we show that property (T) is invariant under measure equivalence of (unimodular) locally compact second countable groups.  In the final  Section \ref{sec:uniform-me}, we provide a definition of uniform measure equivalence, prove Theorem \ref{thm:ume-equals-ce-for-amenable} and check several statements that are expected to hold true for a good notion of uniform measure equivalence.

\subsection*{Notation for locally compact groups}\label{sec:notation}
All locally compact groups are assumed to be Hausdorff and we will abbreviate `locally compact second countable' by `lcsc'.
Moreover,  all group actions on spaces will be left actions.

\subsection*{Acknowledgments} The authors would like to thank Uri Bader, Yves Cornulier, Damien Gaboriau and Christian Rosendal for interesting comments and suggestions, and the anonymous referees for suggesting a number of improvements and for pointing out a mistake in a previous version of the paper.
DK and JK gratefully acknowledge the financial support from  the Villum Foundation (grant no.~7423) and the DFF-Research project  \emph{Automorphisms and Invariants of Operator Algebras} (grant no.~7014-00145B).
SR acknowledges the financial support from Vetenskapsr{\r{a}}det's Starting Grant no. 2018-04243.  DK and SR would like to thank the Isaac Newton Institute for Mathematical Sciences for support and hospitality during the programme {\it Operator Algebras: Subfactors and their Applications} (supported by EPSRC Grant Number EP/K032208/1) where work on this paper was undertaken.

\section{Measure equivalence couplings}
\label{sec:me}

In this section we fix our measure theoretical notation and recall relevant facts about measure equivalence couplings between unimodular lcsc  groups.  Furthermore, we prove  that any measure equivalence coupling can be replaced by a strict coupling.  In order to be able to treat the measure theoretical aspects at a sufficiently rigid level, we begin by defining our notion of measurable spaces and group actions on such.

\begin{defi}
  \label{def:measurable-sapce}
By a \emph{measurable space} we mean a set $X$ endowed with a $\sigma$-algebra $\B$ {whose elements are  called the \emph{measurable subsets} of $X$}.  If $X$ is furthermore endowed with a measure $\mu \colon \B \to [0,\infty]$, the triple $(X,\B,\mu)$ is referred to as a \emph{measure space}, and when the $\sigma$-algebra is clear from the context we will {often suppress it notationally and} simply write $(X,\mu)$.  We will use the standard measure theoretic lingo and refer to a subset $N\subset X$ as a \emph{null set} if $N$ is contained in a measurable subset of measure zero. Similarly, the complement of a null set will be referred to as being \emph{conull} and any non-null subset is referred to as \emph{non-negligible}. 
If $\mu$ and $\nu$ are two measures on $\B$ with the same null sets we write $\mu\sim \nu$ and refer to the two measures as being \emph{equivalent}, and we furthermore denote by $[\mu]$ the \emph{measure class} of $\mu$; i.e.~the set of all measures equivalent to it.
An isomorphism of measurable spaces $(X,\B)$ and $(Y,\C)$ is a bijective measurable map $f\colon X \to Y $ whose inverse is also measurable.    If $(X, \B)$ and $(Y, \C)$ are moreover endowed with  measure classes  $[\mu]$ and $[\nu]$ of  measures $\mu$ and $\nu$, respectively, a \emph{non-singular isomorphism} between $(X, \B, [\mu])$ and $(Y, \C, [\nu])$ is a measurable map $f: X \to Y$ for which there are conull, measurable subsets $X_0 \subset X$ and $Y_0 \subset Y$ such that $f$ restricts to a measurable isomorphism $f_0\colon X_0 \to Y_0$ and $f_*([\mu]) = [\nu]$.  An \emph{isomorphism} of measure spaces $(X,\B,\mu)$ and $(Y,\C, \nu)$ is a measurable map $f\colon X\to Y$ for which there exist conull, measurable subsets $X_0\subset X$ and $Y_0\subset Y$ such that $f$ restricts to an isomorphism of measurable spaces $f_0\colon X_0\to Y_0$ and  $f_*(\mu)=\nu$. Lastly, for a topological space $X$ the $\sigma$-algebra generated by the open sets is referred to as the \emph{Borel $\sigma$-algebra} and its sets are called \emph{Borel sets}.
\end{defi}

\begin{remark}\label{rem:borel-vs-non-borel} 
  The definition of  a non-singular isomorphism  between  spaces $(X, [\mu])$ and $(Y, [\nu])$ is required to be compatible at all levels with the measurable structures of $X$ and $Y$.  This is no restriction of generality when compared with other possible notions.  Indeed, let $X_0 \subset X$ and $Y_0 \subset Y$ be two not necessarily measurable conull subsets and let $f\colon X_0 \rightarrow Y_0$ a measurable isomorphism for the restricted $\sigma$-algebras that satisfies $f_*[\bar{\mu}\restriction_{X_0}] = [\bar{\nu}\restriction_{Y_0}]$  where $\bar{\mu}$ and $\bar{\nu}$ denote the completed measures.  Since $X_0 \subset X$ is conull, there is a measurable conull subset $X_{00} \subset X_0$.  Then also $Y_{00} := f(X_{00})$ is conull in $Y_0$  -- and hence in $Y$  -- so there is a measurable subset $Y_{000} \subset Y_{00}$ which is conull in $Y$.  We set $X_{000} := f^{-1}(Y_{000})$, which is measurable with respect to the relative $\sigma$-algebra of $X_{00}$.  Since $X_{00}$ is measurable in $X$, it follows that $X_{000}$ is measurable as a subset of $X$, and  it is  conull by construction.  We can therefore restrict $f$ to a measurable isomorphism $X_{000} \rightarrow Y_{000}$ and extend this to a measurable map from $X$ to  $Y$, which is an isomorphism of non-singular spaces in the sense of Definition \ref{def:measurable-sapce}.  For technical reasons, this definition is preferable to superficially more general ones.  A similar reasoning applies to isomorphisms between measure spaces.
\end{remark}

\begin{defi}
  Let $(X,\B)$ be a measurable space and $G$ be an lcsc group. Then $X$ is said to be a \emph{measurable $G$-space} if its endowed with an action of $G$ for which the action map $G\times X \to X$ is measurable; here $G$ is equipped with the Borel $\sigma$-algebra and $G\times X$ with the product $\sigma$-algebra.   Moreover, if the measurable $G$-space $X$ is endowed with  a  measure $\mu$ and the $G$-action is \emph{non-singular}, i.e.~$[g_*\mu]=[\mu]$ for all $g\in G$, then $(X, \B, \mu)$ is called a \emph{non-singular $G$-space}.  
 If the action is actually \emph{measure preserving}, i.e.~$g_*\mu=\mu$ for all $g\in G$,   then $(X,\B,\mu)$ is called a \emph{measure $G$-space}. 
Lastly,  an action preserving a given probability measure is referred to as a \emph{pmp action}.
\end{defi}

\begin{rem}
If $(X,\B)$ is a measurable space and $\eta$ is a $\sigma$-finite measure on it, then there exists a probability measure $\mu$ on $X$ which is equivalent to $\eta$; if $E_n\subset X$ are disjoint, measurable, of positive and finite $\eta$-measure and with conull union in $X$, one may for instance take the measure
\[
\mu(A):=\sum_{n=1}^\infty 2^{-n} \frac{\eta(A\cap E_n)}{\eta(E_n)}, \ A\in \B.
\]
This standard fact will be used repeatedly throughout the paper --- in particular we shall apply it to the Haar measures on a given lcsc group, which are all $\sigma$-finite since every lcsc group is $\sigma$-compact.

\end{rem}

Recall  that a topological space is said to be \emph{Polish} if it is homeomorphic to a complete, separable metric space and that a measurable space $(X,\B)$ is said to be a \emph{standard Borel space} if $X$ admits a topology with respect to which it is Polish and $\B$ is the Borel $\sigma$-algebra; i.e.~the $\sigma$-algebra generated by the open subsets. In this situation, elements of $\B$ are often referred to as \emph{Borel sets} and a measure defined on the Borel $\sigma$-algebra is referred to as a \emph{Borel measure}.  As the following two results show, the class of standard Borel spaces possesses a number of pleasant features.  Lemma \ref{lem:injective-borel}, which follows from an important theorem of Suslin, will be used repeatedly throughout the paper, while Theorem \ref{thm:lusin-novikov} will only be used in Section \ref{sec:me-oe}.

\begin{lem}[see  {\cite[Corollary 13.4 \& 15.2]{kechris-book}}]
  \label{lem:injective-borel}
Any Borel subset of a standard Borel space $X$ is itself standard Borel. Moreover, if $f\colon X\to Y$ is an injective, measurable map between standard Borel spaces, its image $f(X)$ is again standard Borel and  the inverse map $f^{-1}\colon f(X)\to X $ is measurable. 
\end{lem}
 
\begin{thm}[Lusin-Novikov, see {\cite[Theorem 18.10]{kechris-book}}]
  \label{thm:lusin-novikov}
  Let $X, Y$ be standard Borel spaces and let $E \subset X \times Y$ be a Borel subset such that $\pi_X^{-1}(x)$ is countable for every $x \in X$.  Then there is a countable partition into Borel subsets $E = \bigcup_{n \in \mathbb{N}} E_n$ such that $\pi_X|_{E_n}$ is injective for all $n \in \mathbb{N}$. 
\end{thm}



 In what follows,  we shall thus refer to a bijective measurable  map between standard Borel spaces as a  \emph{Borel isomorphism}, and the adjective `standard Borel' will always indicate that the underlying space is a standard Borel space. Thus, a \emph{standard Borel $G$-space} is a standard Borel space which is also a measurable $G$-space, and a \emph{standard Borel measure $G$-space} is a standard Borel $G$-space equipped with a {Borel} measure with respect to which the action is measure preserving. Lastly, a standard Borel measure space {with a Borel probability measure} will be referred to as a \emph{standard Borel probability space}.

\begin{defi}[{\cite{zimmer-book}}]
If $(X,\mu)$ and $(Y,\nu)$ are measure $G$-spaces, then $X$ and $Y$ are said to be  \emph{isomorphic} if there exist conull, $G$-invariant measurable subsets $X_0\subset  X$ and $Y_0\subset  Y$ and a $G$-equivariant isomorphism of measurable spaces $f\colon X_0\to Y_0$ such that $f_*\mu\restriction_{X_0}=\nu\restriction_{Y_0}$. As is standard,
we will often write $f\colon X\to Y$ in this situation.
\end{defi}

With these preliminaries taken care of, we can now introduce the notion of measure equivalence following Bader-Furman-Sauer.

\begin{defi}[{\cite{BFS-integrable}}]\label{def:me}
  Two  unimodular lcsc groups $G$ and $H$ with a choice of Haar measures $\lambda_G$ and $\lambda_H$ are said to be \emph{measure equivalent} if there exist a standard Borel measure $G\times H$-space $(\Omega,\eta)$  and two standard Borel measure spaces $(X,\mu)$ and $(Y,\nu)$ such that: 
  \begin{itemize}
  \item[(i)] both $\mu$ and $\nu$ are finite measures and $\eta$ is non-zero;
  \item[(ii)] there exists an isomorphism of measure $G$-spaces  $i\colon (G\times Y, \lambda_G\times \nu) \To(\Omega,\eta)$, where $\Omega$ is considered a measure $G$-space for the restricted action and $G\times Y$ is considered a measure $G$-space for the action $g.(g',y)=(gg',y)$;

  \item[(iii)] there exists an isomorphism of measure $H$-spaces $j:(H\times X, \lambda_H\times \mu) \to (\Omega,\eta)$, where $\Omega$ is considered a measure $H$-space for the restricted action and $H\times X$ is considered a measure $H$-space for the action $h.(h',x)=(hh',x)$.
  \end{itemize}
  A standard Borel space $(\Omega,\eta)$ with these properties is called a \emph{measure equivalence coupling} between $(G, \lambda_G)$ and $(H, \lambda_h)$, and whenever needed we will specify the additional data by writing  $(\Omega,\eta, X,\mu, Y,\nu, i,j)$.
\end{defi}

\begin{rem}
  Since the Haar measure on a locally compact group is unique up to scaling, the existence of a measure equivalence is  independent of the choice of Haar measures on the groups in question.  For a proof that measure equivalence is indeed an equivalence relation, the reader is referred to \cite[Appendix A]{BFS-integrable} where the composition of measure equivalence couplings is discussed; for the details, see also \cite[Appendix B]{zimmer-book} and \cite{mackey-point-realization}.
 Given a measure equivalence coupling  $(\Omega,\eta, X,\mu, Y,\nu, i,j)$ between two unimodular lcsc groups with Haar measure, $(G, \lambda_G)$ and $(H, \lambda_H)$, one can define its coupling constant as $\mu(X)/\nu(Y)$ and observe that it is multiplicative under composition and compatible with scaling of Haar measures.  So the \emph{fundamental group} of a unimodular lcsc group can be defined as the group of coupling constants of its self-couplings for a fixed choice of Haar measure.
By convention, two unimodular lcsc groups $G$ and $H$ are called \emph{measure equivalent} if there is a measure equivalence coupling $(\Omega,\eta, X,\mu, Y,\nu, i,j)$ between $(G, \lambda_G)$ and $(H, \lambda_H)$ for some choice of Haar measures.  The previous discussion makes it clear that, without loss of generality, one may assume that $\mu$ and $\nu$ are probability measures. Also note that, under the hypotheses in Definition \ref{def:me}, $(\Omega,\eta)$ is automatically $\sigma$-finite and
if $A\subset  G\times Y$ is a $G$-invariant subset then $A=G\times Y_0$ where $Y_0=\{y\in Y \mid \exists g\in G: (g,y)\in A\}$. Thus, the requirement on the map $i$ amounts to saying that there exist a measurable conull subset $Y_0\subset  Y$ and a measurable conull $G$-invariant subset $\Omega_0\subset \Omega$ such that $i\colon G\times Y_0 \to \Omega_0$ is a measure preserving, $G$-equivariant isomorphism of Borel spaces (and similarly for $j$).
\end{rem}

A recurring issue when working with group actions on measure spaces is that we often encounter only near actions in the sense of the following definition.
\begin{defi}
  A  measure preserving \emph{near action} of a lcsc group $G$ on a measure space $(X,  \mu)$ is a measurable map $G \times X \To X, (g,x)\mapsto g.x$, such that:
\begin{itemize}
\item[(i)] $e.x=x$ for almost all $x \in X$;
\item[(ii)] for all $g_1,g_2\in G$: $(g_1g_2).y=g_1.(g_2.y)$ for almost all $x\in X$;
\item[(iii)] For all $g\in G$ and all measurable subsets $A\subset X$ one has  $\mu(g.A)=\mu(A)$.
\end{itemize}
\end{defi}

Appealing  to a result of Mackey, we may actually replace a near action by a measure preserving action which agrees with the near action almost everywhere as long as the the group is second countable and the underlying Borel space is standard.
\begin{lemma}[\mbox{\cite[Lemma 3.2]{ramsey}, cf. \cite[Theorem 1]{mackey-point-realization}}]
  \label{lem:near-action-to-Borel-action}
  Let $(X, \mu)$ be a standard Borel probability space and let $G \times X \to X$ be a measure preserving near action of a lcsc group.  Then there is a standard Borel measure $G$-space $Y$ and an isomorphism of measure spaces $f\colon X \to Y$ such that for all $g \in G$ the equality $g.f(x) = f(g.x)$ holds for almost all $x \in X$.  
\end{lemma}

Recall that if $G$  and $H$ are topological groups and $(X,\mu)$ is a measure $G$-space then a  measurable map $\omega\colon G\times X \to H$ is called a \emph{measurable cocycle} if for all $g_1,g_2\in G$ there exists a conull subset $X_0\subset X$ such that the \emph{cocycle relation} $\omega(g_1g_2,x)=\omega(g_1,g_2.x)\omega(g_2,x)$ holds for all $x\in X_0$.  A measurable cocycle is said to be \emph{strict} if the cocycle relation is satisfied for all $x\in X$.  
Assume now that $G$ and $H$ are measure equivalent, unimodular, lcsc groups and denote by $(\Omega,\eta)$ a measure equivalence coupling with associated finite {standard Borel} measure spaces $(X,\mu)$ and $(Y,\nu)$ and measure space isomorphisms $i$ and $j$.  That is, there exist conull, measurable subsets $X_0\subset  X$, $Y_0\subset Y$, $\Omega_0,\Omega_0'\subset \Omega$ such that $\Omega_0$ and $\Omega_0'$ are $G$- and $H$-invariant, respectively, and the restrictions
\begin{align*}
i_0&:=i\restriction_{G\times Y_0} \colon G\times Y_0 \To \Omega_0\\
j_0&:=j\restriction_{H\times X_0} \colon H\times X_0 \To \Omega_0'
\end{align*}
are $G$- and $H$-equivariant, measure preserving, measurable isomorphisms, respectively. 
Then for every $h\in H$ there exists a conull, measurable subset $Y_h\subset Y_0$ such that for all $y\in Y_h$  and all $g\in G$, one has $h.i_0(g,y)\in \Omega_0$, and thus $i_0^{-1}(h.i_0(g,y))\in G\times Y_0$ makes sense. For such $y\in Y_h$ we now define $\omega_G({h},y)^{-1}\in G$ to be the  $G$-coordinate of   $i_0^{-1}(h.i_0({e_G},y))$ and $h.y$ to be the $Y$-coordinate of $i_0^{-1}(h.i_0({e_G},y))$.  Extending $h\colon Y_h\to Y $ to $Y$ by setting $h(y)=y$ for $y\in Y\setminus Y_h$, a direct computation shows that this defines a measure preserving near action.  

Another direct computation reveals that any {measurable}  extension of $\omega_G$ to $H\times Y$ is a measurable cocycle.
Note also that if $i$ itself were a measurable isomorphism and  $G$-equivariant at every point, then the associated near action of $H$ on $(Y,\nu)$ is a genuine action and the associated measurable cocycle $\omega_G$ is automatically strict.
In exactly the same manner we get a measure preserving near action $G\curvearrowright (X,\mu)$ with associated measurable cocycle $\omega_H\colon G\times X\to H$. 
\\

The following theorem shows that one may always replace a measure equivalence coupling with one where all the defining properties are satisfied pointwise, in contrast to almost everywhere.

\begin{thm}\label{thm:strict-coupling}
Let $(\Omega, \eta, X,\mu, Y,\nu, i , j)$ be a measure equivalence coupling between unimodular lcsc groups $G$ and $H$ with action $(g,h,t)\mapsto (g,h).t$.  Then there are conull, Borel subsets $\Omega' \subset \Omega$, $X' \subset X$ and $Y' \subset Y$ and a measure preserving action $G\times H\times \Omega' \To \Omega'$, $(g,h,t)\mapsto (g,h)\triangleright t$, such that the following hold:
\begin{itemize}
\item[(i)] for all $(g,h)\in G\times H$ one has $(g,h).t=(g,h)\triangleright t$ for $\eta$-almost all $t\in \Omega'$;
\item[(ii)] there exists a Borel isomorphism  $i'\colon G\times Y' \to \Omega'$  with $i'_*(\lambda_G\times \nu\restriction_{Y'})=\eta\restriction_{\Omega'}$ and $i'(g_0g, y)= (g_0,e_H)\triangleright i'(g,y)$ for all $g_0,g\in G$ and all $y\in Y'$;
\item[(ii)] there exists a Borel isomorphism  $j'\colon H\times X' \to \Omega'$  with $j'_*(\lambda_H\times \mu\restriction_{X'})=\eta\restriction_{\Omega'}$ and $j'(h_0h, x)= (e_G,h_0)\triangleright j'(h,x)$ for all $h_0,h\in H$ and all $x\in X'$.
\end{itemize}

\end{thm}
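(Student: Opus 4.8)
The plan is to reduce the coupling to explicit skew-product models on $G\times Y$ and $H\times X$ built from the cocycle data constructed just above, and then to strictify the identifications $i$ and $j$ by a device that circumvents the fact, stressed in the introduction, that saturations of null sets under a non-discrete group need not be null.

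First I would record the two strict models. Using the genuine measure preserving action of $H$ on $(Y,\nu)$, written $(h,y)\mapsto h.y$, and the strict cocycle $\omega_G'\colon H\times Y\To G$ obtained above, define
\[
(g_0,h_0)\triangleright_Y(g,y):=\bigl(g_0\,g\,\omega_G'(h_0,y)^{-1},\,h_0.y\bigr).
\]
A direct computation shows that this is a genuine, everywhere-defined $G\times H$-action: the $G$- and $H$-parts commute by inspection, $e_H.y=y$ together with strictness forces $\omega_G'(e_H,y)=e_G$ so that the identity acts trivially, and associativity of the $H$-part is exactly the (strict) cocycle identity. That $\triangleright_Y$ preserves $\lambda_G\times\nu$ is precisely where unimodularity of $G$ enters: the $H$-action multiplies the $G$-coordinate on the right by $\omega_G'(h_0,y)^{-1}$, and right translation preserves the left Haar measure $\lambda_G$ exactly because the modular function of $G$ is trivial; one then concludes by Fubini. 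Symmetrically, from the action of $G$ on $(X,\mu)$ and a strict cocycle $\omega_H'\colon G\times X\To H$ one obtains a genuine measure preserving action $\triangleright_X$ on $(H\times X,\lambda_H\times\mu)$, for which $(e_G,h_0)\triangleright_X(h,x)=(h_0h,x)$. By construction $i$ intertwines $\triangleright_Y$ with the original $G\times H$-action off a null set, and likewise $j$ intertwines $\triangleright_X$; thus $i$ and $j$ are almost-everywhere $G\times H$-equivariant isomorphisms between \emph{genuine} $G\times H$-spaces.

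The technical heart is a device upgrading conull Borel sets to genuinely invariant ones. For a measurable action of an lcsc group $K$ preserving a $\sigma$-finite measure $\eta$ on a standard Borel space, fix a probability measure equivalent to Haar measure on $K$ and, for a conull Borel set $P$, put $P^{*}:=\{t : k.t\in P\text{ for almost every }k\in K\}$. Then $P^{*}$ is Borel by measurability of the action and Fubini; it is conull because $\{(k,t): k.t\notin P\}$ is null by invariance of $\eta$ and Fubini; and it is genuinely $K$-invariant because right translation preserves Haar-null sets (the modular function being strictly positive), so no saturation is ever formed. Combined with an essential-value argument this strictifies any almost-everywhere equivariant measure isomorphism $\Theta$ between genuine $K$-spaces: on a genuinely invariant conull Borel set one sets $\Theta_1(t)$ to be the essential value of $k\mapsto k^{-1}.\Theta(k.t)$, which is almost surely independent of $k$ by the almost-everywhere equivariance, depends Borel-measurably on $t$, is genuinely equivariant after the substitution $k\mapsto kk_0$, and agrees with $\Theta$ almost everywhere.

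Finally I would assemble the coupling. Applying the strictification of the previous paragraph to $i$ — with $K=G\times H$, source action $\triangleright_Y$ and target the original action — yields genuinely invariant conull Borel sets and a measure preserving Borel isomorphism $i'$ from a set of the form $G\times Y'$ onto a genuinely invariant conull Borel set $\Omega'\subset\Omega$, which is genuinely $G\times H$-equivariant. Take $\triangleright$ to be the original action restricted to $\Omega'$; then (i) holds on the nose, and since $(g_0,e_H)\triangleright_Y(g,y)=(g_0g,y)$ we obtain $i'(g_0g,y)=(g_0,e_H)\triangleright i'(g,y)$ for all $g_0,g,y$, which is the $G$-equivariance. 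Strictifying $j$ in the same way produces $j_1$ onto a genuinely invariant conull Borel set $\Omega''\subset\Omega$ from a set of the form $H\times X'$; intersecting, $\Omega'\cap\Omega''$ is again genuinely invariant, conull and Borel, and its $i'$- and $j_1$-preimages are of the form $G\times Y''$ and $H\times X''$ with $Y''$, $X''$ genuinely invariant conull Borel. Shrinking $\Omega'$ to $\Omega'\cap\Omega''$ and correspondingly $Y'$, $X'$, and setting $j':=j_1$, the identity $(e_G,h_0)\triangleright_X(h,x)=(h_0h,x)$ gives the $H$-equivariance $j'(h_0h,x)=(e_G,h_0)\triangleright j'(h,x)$, with all measure conditions inherited. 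The step requiring by far the most care — and the one where the non-discreteness of $G$ and $H$ is felt — is establishing the strictification device itself: one must verify that $P\mapsto P^{*}$ really returns a Borel, conull, genuinely invariant set and that the essential value $k\mapsto k^{-1}.\Theta(k.t)$ is genuinely well defined, Borel in $t$, and equivariant, all without ever forming the saturation of a null (or Borel) set, since such saturations need be neither null nor Borel. Granting this device, the remaining assembly, including the coordination of a common $\Omega'$ by intersection, is routine.
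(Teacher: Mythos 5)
Your proof is correct in outline but takes a genuinely different route from the paper at the decisive step. The first stage is the same in both: replace the near action of $H$ on $Y$ by a genuine action via \cite[Lemma 3.2]{ramsey}, make the cocycles strict via \cite[Theorem B.9]{zimmer-book}, and form the skew-product actions on $G\times Y$ and $H\times X$, with unimodularity giving invariance of $\lambda_G\times\nu$ and $\lambda_H\times\mu$. From there the paper does \emph{not} strictify $i$ or $j$ against the original action on $\Omega$; instead it pushes the skew-product actions forward through $i_0$ and $j_0$ to obtain two genuine $G\times H$-actions on $\Omega_0$ and $\Omega_0'$ (each agreeing with the original one only almost everywhere), and reconciles these two actions by passing to Boolean algebras and invoking Mackey's point-realization uniqueness theorem \cite[Theorem 2]{mackey-point-realization}; the final $\triangleright$ is the transported action, which is why condition (i) is a non-vacuous statement there. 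You keep the original action on $\Omega$ throughout and strictify $i$ and $j$ directly by the essential-value device, so your $\triangleright$ is the original action restricted to $\Omega'$ and (i) holds trivially. Your device is essentially \cite[Proposition B.5]{zimmer-book} (used elsewhere in the paper, in the property (T) argument), and the $P^{*}$ construction and the substitution $k\mapsto kk_0$ are exactly right. The one point your sketch leaves genuinely open is the upgrade from ``strictly equivariant Borel map agreeing with $\Theta$ almost everywhere'' to ``Borel isomorphism onto a genuinely invariant conull Borel set'': you still need injectivity of $\Theta_1$ on a genuinely invariant conull Borel set (take $N$ conull Borel on which $\Theta_1=\Theta$ and $\Theta$ is injective, pass to $N^{*}$, and observe that $\Theta_1(w_1)=\Theta_1(w_2)$ forces $\Theta(k.w_1)=\Theta(k.w_2)$ for almost every $k$), together with the fact that the image is Borel (Lusin--Souslin), genuinely invariant and conull. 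These gaps close with the same Fubini arguments you already deploy, so the proof goes through. What your route buys is a more self-contained argument and a trivial condition (i); what the paper's route buys is outsourcing the delicate measure theory to a single citation of Mackey, at the price of replacing the action on $\Omega$.
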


\begin{proof}
By definition, there exist conull, Borel subsets $X_0\subset X$, $Y_0\subset Y$, $\Omega_0,\Omega_0'\subset \Omega$ such that $\Omega_0$ and $\Omega_0'$ are $G$- and $H$-invariant, respectively, and such that the restrictions
\begin{align*}
i_0&:=i\restriction_{G\times Y_0} \colon G\times Y_0 \To \Omega_0\\
j_0&:=j\restriction_{H\times X_0} \colon H\times X_0 \To \Omega_0'
\end{align*}
are $G$- and $H$-equivariant, measure preserving Borel isomorphisms, respectively.  As shown above, we obtain a measure preserving near action of  $H$ on $(Y_0,\nu)$ and by  Lemma~\ref{lem:near-action-to-Borel-action} there exists a measure preserving action,$(h,y)\mapsto h.y $,  of $H$ on $Y_0$ which agrees with the near action almost everywhere. Denote by $\omega_G$ and $\omega_H$ the measurable cocycles associated with the coupling, and assume without loss of generality \cite[Theorem B.9]{zimmer-book} that both cocycles are strict.  We then get a measure preserving $H$-action on $G\times Y_{0}$ by defining $h.(g,y)=(g\omega_G(h,y)^{-1}, h.y)$ which induces a measure preserving $H$-action  on
$\Omega_{0} :=i_0(G\times Y_{0}  )$ given by $h.t=i_0(h.i_0^{-1}(t))$ that agrees with the original $H$-action on $\Omega$ almost everywhere.
Now clearly $\Omega_{0}$ is both $G$-and $H$-invariant (for the original action of $G$ and the new action of $H$) and $i_{0}$ is a measure preserving, $G\times H$-equivariant Borel isomorphism with respect to these actions. 
 Symmetrically, we obtain a measurable $G$-action on $X_0$ which pushes forward to a $G$-action on $\Omega_{0}':=j_0(H\times X_{0})$ that agrees  with the original action $G\curvearrowright \Omega$ almost everywhere and with respect to which $j_{0}$ is a $G\times H$-equivariant, measure preserving, Borel isomorphism.\\
 
  Next, replace $\eta$ with a probability measure $\zeta$ in the same measure class, and note that both the new actions $G\times H\curvearrowright \Omega_{0}$ and $G\times H\curvearrowright \Omega_{0}'$ are non-singular with respect to $\zeta$. The inclusions $\Omega_{0}\subset \Omega$ and $\Omega_{0}'\subset \Omega$ induce $G\times H$-equivariant isomorphisms at the level of measure algebras by \cite[Lemma 3.1]{ramsey}  (here $\Omega$ is considered with the original $G\times H$-action and $\Omega_{0}$ and $\Omega_{0}'$ with the ones just constructed) associated with $\zeta$ 
and in total we therefore obtain a $G\times H$-equivariant isomorphism of measure algebras $\Phi\colon B(\Omega_{0}, [\zeta])\to B(\Omega_{0}', [\zeta]).$ By Mackey's uniqueness theorem \cite[Theorem 2]{mackey-point-realization} there exist $\zeta$-conull (and thus $\eta$-conull) $G\times H$-invariant Borel subsets $\Omega_{00}\subset \Omega_{0}$ and $\Omega_{00}'\subset \Omega_{0}'$ and a $G\times H$-equivariant Borel isomorphism $\varphi \colon \Omega_{00}\to \Omega_{00}'$ which dualizes  to $\Phi$. Since $\Phi$ preserves the  measure $\eta$ the same is true for $\varphi$. Now pull $\Omega_{00}'$ back via $j_0$ to an $H$-invariant subset of $H\times X_{0}$ which is then of the form $H\times X_{00}$ for a conull Borel subset $X_{00}\subset X_{0}$ and, similarly, pull $\Omega_{00}$ back via $i_0$ to a set of the form $G\times Y_{00}$. Then replacing $(\Omega, \eta)$ with $(\Omega_{00}, \eta\restriction_{\Omega_{00}} )$ we obtain measure preserving Borel isomorphisms
\begin{align*}
i_{00}&\colon (G\times Y_{00}, \lambda_G \times \nu\restriction_{Y_{00}} ) \To (\Omega_{00},\eta\restriction_{\Omega_{00}})\\
\varphi^{-1} \circ j_{00} &\colon (H\times X_{00}, \lambda_H\times \mu\restriction_{X_{00}} ) \To  (\Omega_{00},\eta\restriction_{\Omega_{00}}),
\end{align*}
which are,  respectively,  pointwise $G$- and $H$-equivariant.  Putting $\Omega' = \Omega_{00}$, $X' = X_{00}$, $Y' = Y_{00}$, $i'=i_{00}$ and $j':=\varphi^{-1} \circ j_{00} $  we obtain a coupling with the claimed properties.
\end{proof}
\begin{rem}
A measure coupling  $(\Omega, \eta, X, \mu, Y,\nu, i,j)$ in which $i$ and $j$ are Borel isomorphisms and globally equivariant is called a \emph{strict measure coupling}, and the theorem just proven shows that there is always a strict measure coupling between measure equivalent groups.
\end{rem}

The following lemma  will be used in the proof of Proposition \ref{prop:free-ergodic-coupling} and in Section~\ref{sec:me-oe}.

\begin{lemma}
  \label{lem:invariance}
  Let $G$ be an lcsc group and $X$ be a standard Borel space and endow $G \times X$ with the  structure of a measurable $G$-space given by multiplication on the first factor.
  \begin{enumerate}
  \item[(i)] If $[\mu]$ is a probability measure class on $X$ and $[\rho]$ is a $G$-invariant class of a $\sigma$-finite measure on $G \times X$ that projects to $[\mu]$ via the right leg projection $p_X\colon G\times X \to X $ then $[\rho] = [\lambda_G \times \mu]$.
  \item[(ii)] If $\mu$ is a probability measure on $X$ and $\rho$ is a $G$-invariant $\sigma$-finite measure on $G \times X$ which is equivalent to a probability measure projecting to $\mu$, then there is a measurable function $b\colon X \rightarrow [0,\infty [$ such that $\rho = \lambda_G \times b \mu$.
  \end{enumerate}
\end{lemma}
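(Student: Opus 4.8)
The plan is to reduce both statements to the classical fact that a nonzero $\sigma$-finite measure on an lcsc group which is quasi-invariant under all left translations is equivalent to Haar measure, and to extract this fact \emph{fibrewise} by disintegrating over the projection $p_X\colon G\times X\to X$. The feature that makes this work is that the $G$-action fixes every fibre $p_X^{-1}(x)=G\times\{x\}\cong G$ and acts on it by left translation. For (i) I would first choose a convenient representative: since $[\rho]$ projects to $[\mu]$ and $\mu$ is a probability measure, any probability measure $\rho_1\in[\rho]$ (which exists by $\sigma$-finiteness) has projection $p_{X*}\rho_1\sim\mu$, and replacing $\rho_1$ by $(\tfrac{d\mu}{d\,p_{X*}\rho_1}\circ p_X)\cdot\rho_1$ produces a probability measure $\rho\in[\rho]$ with $p_{X*}\rho=\mu$ exactly. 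Disintegrating over $p_X$ then gives $\rho=\int_X\rho_x\,d\mu(x)$ with each $\rho_x$ a probability measure on the fibre $G$.

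Next I would transfer $G$-invariance of the class to the fibres. Fix $g\in G$. As the action preserves fibres, $g_*\rho$ again disintegrates over $\mu$, now with fibre measures $g_*\rho_x$ (left translation by $g$); since $[\rho]$ is $G$-invariant we have $g_*\rho\sim\rho$, and essential uniqueness of the disintegration forces $g_*\rho_x\sim\rho_x$ for $\mu$-almost every $x$. A Fubini argument then swaps the quantifiers: for $\mu$-a.e.\ $x$ the set $Q_x=\{g\in G: g_*\rho_x\sim\rho_x\}$ is conull in $G$. Since $Q_x$ is a subgroup and a conull set $A$ satisfies $AA^{-1}=G$, we conclude $Q_x=G$; that is, $\rho_x$ is left-quasi-invariant for almost every $x$.

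It then remains to invoke the characterization of Haar measure: a nonzero left-quasi-invariant $\sigma$-finite measure $m$ on $G$ satisfies $m\sim\lambda_G$. One direction runs as follows: if $\lambda_G(A)=0$, writing $\int_G m(gA)\,d\lambda_G(g)=\int_G\int_G\mathbbm{1}_A(g^{-1}h)\,d\lambda_G(g)\,dm(h)$ and using that $g\mapsto g^{-1}h$ pushes $\lambda_G$ to a measure equivalent to $\lambda_G$, the inner integral vanishes, so $m(gA)=0$ for a.e.\ $g$ and quasi-invariance gives $m(A)=0$; the reverse inclusion is analogous, using $m\neq 0$. Applying this to $\rho_x$ yields $\rho_x\sim\lambda_G$ for a.e.\ $x$, whence $\rho=\int_X\rho_x\,d\mu(x)\sim\int_X\lambda_G\,d\mu(x)=\lambda_G\times\mu$, i.e.\ $[\rho]=[\lambda_G\times\mu]$, proving (i).

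For (ii), part (i) applies since the hypotheses furnish a $G$-invariant class projecting to $[\mu]$, so $\rho\sim\lambda_G\times\mu$ and I may write $\rho=f\cdot(\lambda_G\times\mu)$ for a measurable $f\colon G\times X\to[0,\infty[$ that is a.e.\ positive and finite. Here \emph{genuine} invariance enters: since $g_*(\lambda_G\times\mu)=\lambda_G\times\mu$ by left-invariance of Haar measure, the identity $g_*\rho=\rho$ becomes $f(gg',x)=f(g',x)$ for $(\lambda_G\times\mu)$-a.e.\ $(g',x)$, for each $g$, and Fubini upgrades this to a.e.\ $(g,g',x)$. Ergodicity of the left-translation action of $G$ on $(G,\lambda_G)$ then forces $f(\cdot,x)$ to be $\lambda_G$-a.e.\ equal to a constant $b(x)$, and fixing a probability density $w$ on $G$ and setting $b(x)=\int_G f(g,x)\,w(g)\,d\lambda_G(g)$ exhibits $b\colon X\to[0,\infty[$ as measurable; hence $\rho=(b\circ p_X)\cdot(\lambda_G\times\mu)=\lambda_G\times b\mu$. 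I expect the main obstacle to be the second paragraph: transferring quasi-invariance of the ambient class to the fibre measures through the disintegration while tracking $\sigma$-finiteness, and performing the quantifier swap so that each $\rho_x$ is quasi-invariant under \emph{all} of $G$ rather than merely almost every $g$; the Haar characterization and the ergodicity step are then routine once this is in place.
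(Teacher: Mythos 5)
Your overall architecture for (i) is the same as the paper's: disintegrate over $p_X$, transfer quasi-invariance of the class to the fibre measures $g$ by $g$, upgrade from ``for each $g$, a.e.\ $x$'' to ``for a.e.\ $x$, all $g$'', and then invoke the classical fact that a nonzero $\sigma$-finite quasi-invariant measure on $G$ is equivalent to Haar measure. The load-bearing step is the quantifier swap, and this is where your argument has a genuine gap. You apply Fubini to the set $S=\{(g,x): g_*\rho_x\sim\rho_x\}$, but Fubini requires $S$ to be product-measurable, and this is not automatic: equivalence of measures is a condition quantified over \emph{all} Borel subsets of $G$, so it is not determined by a countable generating algebra, and without measurability the implication ``every $g$-slice is conull $\Rightarrow$ almost every $x$-slice is conull'' simply fails (Sierpi\'nski-type sets under CH have all horizontal slices null and all vertical slices conull). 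The gap is fixable --- one can show that $\{(m,m')\in P(G)^2: m\sim m'\}$ is Borel, e.g.\ via $m\ll m'\Leftrightarrow\inf_{t\in\NN}\sup_{E\in\mathcal A}(m(E)-tm'(E))=0$ for a countable generating algebra $\mathcal A$, and compose with the Borel maps $x\mapsto\rho_x$ and $(g,m)\mapsto g_*m$ --- but as written the step is unjustified, and it cannot be bypassed by restricting to a countable dense subgroup $G_0$, since your conull-subgroup trick ($Q_xQ_x^{-1}=Q_x$ and $AA^{-1}=G$ for conull $A$) needs $Q_x$ conull, not merely dense. The paper avoids the issue entirely by convolving with a bump function $\delta\in C_c(G)$: for fixed $x$ and $E$ the function $g\mapsto\delta*\pi^x(gE)$ is \emph{continuous}, so invariance along a countable dense subgroup (which only needs one conull set of $x$'s per group element, no joint measurability) propagates to all of $G$.

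Two further remarks. First, once the quantifier swap is repaired, your endgame for (i) is fine: your self-contained proof that a quasi-invariant $\sigma$-finite measure is equivalent to $\lambda_G$ is correct (the paper instead cites Mackey), and passing the fibrewise equivalence through the disintegration is routine. Second, your treatment of (ii) is a genuinely different and cleaner route than the paper's: you use (i) to write $\rho=f\cdot(\lambda_G\times\mu)$ and then exploit genuine invariance plus ergodicity of the translation action to show $f$ depends only on $x$; here the Fubini applications are legitimate because the relevant sets are defined by equality of jointly measurable functions. The paper instead re-runs the disintegration and proves genuine (not just class-) invariance of each $\rho^x$ via a lower-semicontinuity argument with truncated Radon--Nikodym derivatives, which is considerably more work. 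Your version of (ii) would be a worthwhile simplification; only mind that $b(x)=\int_G f(g,x)w(g)\,\mathrm d\lambda_G(g)$ may be infinite on a $\mu$-null set and should be redefined there to land in $[0,\infty[$.
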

Note that part (i) may be seen as a generalization of   \cite[Lemma 3.3]{mackey-stone-von-neumann}, and that part (ii) boils down to uniqueness of the Haar measure in the situation where  $X$ is a one-point space. 

\begin{proof}
  We start by proving (i).  
  Since $\rho$ is $\sigma$-finite it is equivalent to a probability measure $\rho_0$ and we have $\mu \sim (p_X)_*(\rho)\sim (p_X)_*(\rho_0) $, so upon replacing $\mu$ with  $(p_X)_*(\rho_0)$ we may assume that $(p_X)_*(\rho_0)=\mu$. So Theorem 2.1 in \cite{hahn-haar-measure-groupoids} applies and we find a disintegration $\rho = \int_X \rho^x \mathrm d \mu(x)$, where $\rho^x$ are Borel measures supported on $G \times \{x\}$ for each $x \in X$.  Further, the classes $[\rho^x]$ are uniquely determined $\mu$-almost everywhere and since $\rho$ is $\sigma$-finite, $\rho^x$ is $\mu$-almost everywhere $\sigma$-finite too.  We may therefore assume that $\rho^x$ is $\sigma$-finite for all $x\in X$.  \\
  Fix now a $g\in G$ and consider the measure $g_*\rho$. The family $(g_*\rho^x)_{x\in X}$ then provides a disintegration of $g_*\rho$, and since $g_*\rho\sim \rho$ by assumption, we conclude from the uniqueness part of  \cite[Theorem 2.1]{hahn-haar-measure-groupoids} that $g_*\rho^x\sim \rho^x$ for almost all $x\in X$. Thus, by Fubini's theorem, we conclude that for almost all $x\in X$, $g_*\rho^x\sim \rho^x$ for almost all $g\in G$.  Fixing an $x\in X$ such that $g_*\rho^{x}\sim \rho^{x}$ for almost all $g\in G$, one observes that the set $\{g\in G \mid g_*\rho^x\sim \rho^x \}$ is closed under the multiplication in $G$, and hence has to be equal to $G$ by \cite[Proposition B.1]{zimmer-book}. This shows that, $\mu$-almost everywhere, the measure class $[\rho^x]$ is $G$-invariant.  By Mackey's result \cite[Lemma 3.3]{mackey-stone-von-neumann}, this implies that $[\rho^x] = [\lambda_G\times \delta_x]$, where $\delta_x$ denotes the Dirac mass at $x$.  We therefore obtain that
\begin{equation*}
  \rho
  \sim
  \int_X \rho^x \mathrm d \mu(x)
  \sim 
  \int_X \lambda_G \times \delta_x \mathrm d \mu(x)
  =
  \lambda_G \times \mu
  \text{,}
\end{equation*}
which finishes the proof of (i). \\

We next prove (ii).  By Theorem 2.1 in \cite{hahn-haar-measure-groupoids} there exists a disintegration $\rho = \int_X \rho^x \mathrm d \mu(x)$ where, for all $x \in X$, each $\rho^x$ is a Borel measure on $G \times X$ whose support lies in $G \times \{x\}$ and the map  $x\mapsto \rho^x$ is unique up to measure zero.  For  notational convenience, whenever $x\in X$ we will identify $G$ with $G\times \{x\}$ in the sequel.
For fixed $g \in G$, the equality
\begin{gather*}
  \int_X \rho^x \mathrm d \mu(x) = \rho = g_* \rho = \int_X g_*\rho^x \mathrm d \mu(x)
\end{gather*}
shows that $g_* \rho^x = \rho^x$ for $\mu$-almost all $x \in X$.  By Fubini, this implies that for $\mu$-almost all $x \in X$ the subgroup $\{g \in G \mid g_* \rho^x = \rho^x\} \leq G$ is co-negligible and must therefore equal $G$ by \cite[Proposition B.1]{zimmer-book}.  We therefore obtain that $\rho^x$ is $G$-invariant for $\mu$-almost all $x \in X$.  Replacing $\rho^x$ by $\lambda_G$ on a $\mu$-negligible subset, we may assume that $\rho^x$ is $G$-invariant for all $x \in X$.  Hence, uniqueness of the Haar measure implies that for all $x \in X$ there is some $b(x) \in [0, \infty[$ such that $\rho^x = b(x) \lambda_G$.  Choosing a measurable subset $E \subset G$ satisfying $\lambda_G(E) = 1$, we find that that $b(x) = \rho^x(E \times X)$ for all $x \in X$, which proves measurability of $b: X \to [0,\infty[$.  We infer that $\rho = \lambda_G \times b \mu$.  This finishes the proof of the lemma.\end{proof}

We end this section with a result showing that in addition to strictness, one can also obtain freeness and ergodicity of measure couplings.  In order to clarify the measure theoretical subtleties, we recall the following definitions.
\begin{defi}
  \label{def:free-ergodic}
  Let $G$ be an lcsc group and $(X,\mu)$ a non-singular $G$-space.
  \begin{itemize}
  \item We say that $G \curvearrowright X$ is \emph{essentially free} if the set of all elements in $X$ whose stabiliser is non-trivial is conull.
  \item We say that $G \curvearrowright X$ is \emph{ergodic} if every measurable subset $A \subset X$, for which the set $A \Delta g A \subset X$ is a null set  for all $g \in G$, is either null or conull in $X$.
  \end{itemize}
\end{defi}
\begin{rem}\label{rem:strong-ergodicity}
Note that when $G$ acts non-singularly on a standard Borel probability space $(X,\mu)$, then ergodicity of the action is equivalent to the formally weaker statement:  any  $G$-invariant Borel set $A\subset X$ is either null or conull (cf.~\cite[Proposition 7.7]{grigorchuk-de-la-harpe}). In particular, when given a standard Borel $G$-space with  a $\sigma$-finite $G$-invariant Borel measure $\eta$, we may find a probability measure $\mu$ on $X$ such that $[\eta]=[\mu]$ and for which the action is therefore non-singular. Hence in this situation there is no difference between the two notions of ergodicity.

\end{rem}

\begin{prop}
  \label{prop:free-ergodic-coupling}
Let $G$ and $H$ be measure equivalent, unimodular, lcsc groups. Then there exists a free, ergodic and strict measure equivalence coupling between $G$ and $H$.
\end{prop}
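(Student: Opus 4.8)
The plan is to secure the three properties one at a time, ordering the steps so that the later constructions do not destroy what was gained earlier: strictness is already available, freeness I obtain by an amplification, and ergodicity I obtain by an ergodic decomposition. First, by Theorem \ref{thm:strict-coupling} I may assume the given coupling $(\Omega,\eta,X,\mu,Y,\nu,i,j)$ is already strict, so that $i$ and $j$ are globally equivariant, measure preserving Borel isomorphisms and the genuine $H$-action on $Y$ and $G$-action on $X$ constructed in Section \ref{sec:me} are available.

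To force essential freeness I amplify by an auxiliary free action. I fix a free, probability measure preserving action $G\times H\curvearrowright(Z,\zeta)$ on a standard Borel probability space (such actions exist for every lcsc group, e.g. suitable Gaussian actions) and pass to $\Omega\times Z$ with the diagonal $G\times H$-action. Since the stabiliser of a point $(t,z)$ is contained in the stabiliser of $z$, essential freeness of $Z$ makes the diagonal action essentially free. To see that $\Omega\times Z$ is still a coupling I untwist: on the $G$-side, $i\times\id_Z$ identifies $\Omega\times Z$ with $G\times Y\times Z$ carrying $g.(g',y,z)=(gg',y,g.z)$, and the Borel isomorphism $(g',y,z)\mapsto(g',y,(g')^{-1}.z)$ conjugates this to translation in the first coordinate; as $\zeta$ is $G$-invariant this map is measure preserving, exhibiting $G\times(Y\times Z)$ as the required $G$-side, and the symmetric construction handles the $H$-side. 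All maps involved are global Borel isomorphisms, so the amplification preserves strictness. Thus I arrive at a free, strict coupling, at the cost of replacing $Y,X$ by $Y\times Z,X\times Z$; I rename these and keep the notation $(\Omega,\eta,X,\mu,Y,\nu,i,j)$.

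Finally I arrange ergodicity, using that restricting a free strict coupling to a $G\times H$-invariant Borel piece keeps it free and strict. Through the strict identification $\Omega\cong G\times Y$ a $G\times H$-invariant subset is exactly $G\times Y_0$ for an $H$-invariant $Y_0\subset Y$, so $G\times H\curvearrowright\Omega$ is ergodic precisely when $H\curvearrowright(Y,\nu)$ is, and by Remark \ref{rem:strong-ergodicity} I may work with this pmp action on a standard probability space. I take its ergodic decomposition $\nu=\int_\Theta\nu_\omega\,\mathrm d\mathbb P(\omega)$ into ergodic $H$-invariant probability measures and, for each $\omega$, set $\eta_\omega:=i_*(\lambda_G\times\nu_\omega)$, supported on the invariant piece $\Omega_\omega:=i(G\times\supp\nu_\omega)$. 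Invariance of $\lambda_G\times\nu_\omega$ under the skew $H$-action $h.(g,y)=(g\omega_G(h,y)^{-1},h.y)$ uses unimodularity of $G$ together with $H$-invariance of $\nu_\omega$, so $\eta_\omega$ is $G\times H$-invariant and $\sigma$-finite, and $H$-ergodicity of $\nu_\omega$ makes the action on $(\Omega_\omega,\eta_\omega)$ ergodic. What remains is to recognise $(\Omega_\omega,\eta_\omega)$ as a genuine coupling, and the only non-obvious point is finiteness of the fundamental domain on the $H$-side.

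This last point is where Lemma \ref{lem:invariance} enters and is the main obstacle. Transporting $\eta_\omega$ through $j$ gives an $H$-invariant, $\sigma$-finite measure on $H\times X$, which by part (ii) of Lemma \ref{lem:invariance} has the product form $\lambda_H\times\tau_\omega$ for a measure $\tau_\omega$ on $X$; the $H$-side fundamental domain is finite exactly when $\tau_\omega(X)<\infty$. To see this holds for almost every $\omega$ I integrate the decomposition: since $\int_\Theta\eta_\omega\,\mathrm d\mathbb P(\omega)=i_*(\lambda_G\times\nu)=\eta$, transporting through $j$ yields $\lambda_H\times\mu=\lambda_H\times\int_\Theta\tau_\omega\,\mathrm d\mathbb P(\omega)$, whence $\mu=\int_\Theta\tau_\omega\,\mathrm d\mathbb P(\omega)$ and $\int_\Theta\tau_\omega(X)\,\mathrm d\mathbb P(\omega)=\mu(X)<\infty$. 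Therefore $\tau_\omega(X)<\infty$ for $\mathbb P$-almost every $\omega$, and fixing such an $\omega$ produces a coupling $(\Omega_\omega,\eta_\omega,X_\omega,\tau_\omega,Y_\omega,\nu_\omega,i,j)$ that is simultaneously strict, free and ergodic. I expect the load-bearing step to be precisely this appeal to Lemma \ref{lem:invariance} identifying each component as a product coupling; the remaining care is measure-theoretic bookkeeping --- the Borel structure of the ergodic decomposition in the non-discrete setting, and checking that $i$ and $j$ stay measure preserving for the new measures $\eta_\omega$ defined as pushforwards.
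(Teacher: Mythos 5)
Your proposal is correct and follows essentially the same route as the paper: strictness via Theorem \ref{thm:strict-coupling}, freeness by amplifying with an essentially free pmp $G\times H$-action and untwisting the diagonal action, and ergodicity via the ergodic decomposition on one side combined with Lemma \ref{lem:invariance} and the integration argument to get finiteness of the other side's measure almost surely (the paper merely decomposes $\mu$ under the $G$-action rather than $\nu$ under the $H$-action, which is immaterial by symmetry). The one point you gloss over is the upgrade from essential freeness to genuine freeness: after the amplification the paper restricts to the conull set of points with trivial stabiliser, which is Borel and $G\times H$-invariant by \cite[Lemma 10]{stefaan-sven-niels}, and whose preimages under $i$ and $j$ are again of product form --- a step you should include, since the proposition is later invoked precisely to produce genuinely free couplings.
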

\begin{proof}
We split the proof into {two} statements.
\begin{itemize}
\item[(i)] There exists a strict measure equivalence coupling between $G$ and $H$ with genuinely free $G\times H$-action. 
\item[(ii)] The measures associated with the coupling in (i) can be replaced by ergodic ones.
\end{itemize}

We first prove (i). Assume, as we may by Theorem \ref{thm:strict-coupling}, that $(\Omega,X,Y,i,j)$ is a strict measure equivalence coupling and let $(Z,\zeta)$ be a standard Borel probability space upon which $G\times H$ acts freely and measure preservingly, the existence of which is guaranteed by \cite[Proposition 1.2]{adams-elliot-giordano} combined with \cite[Lemma 10]{stefaan-sven-niels}.  Now consider $(\Omega',\eta'):=(\Omega\times Z, \eta\times \zeta)$ with the diagonal $G\times H$-action.  The Borel isomorphism $i^{-1}\times \id \colon \Omega'\to G\times Y\times Z$ intertwines the $G$-action on $\Omega'$ with the action on $G \times Y \times Z$ given by $g_0.(g,z,y) \colon=(g_0g,y,g_0.z)$. 
Further, the Borel isomorphism $\alpha\colon G\times Y\times  Z\to G\times Y\times Z$ given by $\alpha(g,y,z):=(g,y,g^{-1}.z)$ preserves $\lambda_G\times \nu \times \zeta$ and intertwines the $G$-action just described with the action given by multiplication on the first leg. Thus, setting  $(Y',\nu'):=(Y\times Z,\nu\times \zeta)$, the map $(i\times \id)\circ \alpha^{-1}\colon G\times Y' \to \Omega'$ is a Borel isomorphism intertwining the $G$-action on the first leg with the $G$-action on $\Omega'$.  By symmetry, this shows that $\Omega'$ is a strict measure equivalence coupling and, by construction, the $G\times H$-action on $\Omega'$ is free.

To prove (ii), assume that $(\Omega,\eta, X,\mu,Y,\nu, i,j)$ is  a strict, free measure equivalence coupling and hence that $i$ and $j$ induce measure preserving actions $G\curvearrowright (X,\mu)$ and $H\curvearrowright (Y,\nu)$.
  By the Ergodic Decomposition Theorem \cite[Theorem 7.8]{grigorchuk-de-la-harpe} there exists a standard Borel probability space $(Z,\zeta)$ and a family of $G$-invariant, ergodic probability measures $(\mu_z)_{z\in Z}$ such that the map $z\mapsto \mu_z(A)$ is measurable for every measurable subset $A\subset X$ and
\[
\mu(A)=\int_Z \mu_z(A) \mathrm{d}\zeta(z).
\]
Since $\mu_z$ is $G$-invariant and $H$ is unimodular, the pushforward $\eta_z:=j_*(\lambda_H\times \mu_z)$ is  $G\times H$-invariant  and hence the same is true for $\rho_z:=(i^{-1})_*\eta_z$ on $G\times Y$ when the latter is considered with the $G\times H$-action induced by $i^{-1}$. As $\eta=j_*(\lambda_H\times \mu)$ we get that $\eta(B)=\int_{Z}\eta_z(B)d\zeta(z)$ for every measurable set $B\subset \Omega$ and since  $\eta$ is $\sigma$-finite,  $\eta_z$ is $\sigma$-finite for almost all $z\in Z$ and hence the same is true for $\rho_z$.  
Denote by $Z_0$ the  conull Borel subset consisting of points $z$ for which $\eta_z$ is $\sigma$-finite
 and pick a $z \in Z_0$; we now show that $\eta_z$ is ergodic. To this end, note first that Remark \ref{rem:strong-ergodicity} applies so it suffices to check ergodicity only on genuinely $G \times H$-invariant measurable subsets $B\subset \Omega$. For such a set $B$, since the coupling is strict  we obtain that $j^{-1}(B)$ is $H$-invariant and hence of the form $H\times B_0$ for a Borel subset $B_0\subset X$. By $G$-invariance of $B$ we conclude that $B_0$ is invariant for the induced action $G \curvearrowright X$ and hence either null or conull with respect to $\mu_z$; thus $B$ is  either null or conull with respect to $\eta_z$. From this we conclude that $\rho_z$ is ergodic for the $G\times H$-action induced via $i$.  Since $\rho_z$ is $\sigma$-finite it is equivalent to a probability measure $\rho_z'$ and applying Lemma \ref{lem:invariance} to the probability measure $\nu_z:=(\pi_Y)_*\rho_z'$ we obtain a measurable function $b_z\colon Y\to [0,\infty[$ such that $\rho_z=\lambda_G\times b_z\nu_z$. Now note that since $\rho_z$ is ergodic for the $G\times H$-action  $\nu_z':=b_z\nu_z$ is ergodic for the $H$-action; to see this let $B\subset Y$ be an $H$-invariant Borel set and note that $G\times B$ is $G\times H$-invariant and hence either null or conull with respect to $\rho_z=\lambda_G\times \nu_z'$.  Moreover, for any Borel set $U\subset G$ we have 
\[
\lambda_G(U)\nu(Y)=(\lambda_G\times \nu)(U\times Y)= \int_{Z}\rho_z(U\times Y)\mathrm d\zeta(z)= \int_{Z}\lambda_G(U)\nu_z'(Y)\mathrm d\zeta(z),
\]
so $\nu_z'$ is finite $\zeta$-almost surely. Hence for any choice of $z \in Z$ such that $\nu_z'$ is finite, the measure equivalence coupling $(\Omega, \eta_z,X,\mu_z, Y,\nu_z', i,j)$ is ergodic.
\end{proof}

\section{Measure equivalence and orbit equivalence}
\label{sec:me-oe}

We now introduce orbit equivalence in the setting of non-discrete lcsc groups, as treated for instance in \cite[Definition 1.12]{carderi-maitre} for ergodic actions. We specify our definition as follows.
\begin{defi}
\label{def:soe}
Let $G$ and $H$ be lcsc groups and let $(X,\mu)$ be an essentially free,
non-singular, standard Borel probability $G$-space and $(Y,\nu)$ be an essentially free, non-singular,  standard Borel probability $H$-space.  

\begin{itemize}
\item The two actions are said to be \emph{orbit equivalent} if there exist a Borel map $\Delta\colon X\to Y$,  conull Borel subsets $X_0\subset X$ and $Y_0\subset Y$ such that $\Delta_*\mu \sim \nu$ and  $\Delta$ restricts to a Borel isomorphism $\Delta_0\colon X_0 \to Y_0$ 
with the property that
\begin{align}\label{eq:OE-defi}
\Delta_0(G.x \cap X_0)=H.\Delta_0(x)\cap Y_0 \text{ for all } x\in X_0.
\end{align}
\item 
If the actions are actually measure preserving then they are said to be \emph{orbit equivalent} if they are so as non-singular  actions and if the map $\Delta$ can be chosen such that $\Delta_*({\mu})={\nu} $. Moreover, the actions are said to be \emph{stably orbit equivalent} if there exist non-negligible Borel subsets $A\subset X$ and $B\subset Y$ such that $G.A\subset X$ is conull, $H.B\subset Y$ is conull and there exists a Borel isomorphism $\Delta \colon A\to B$  such that $\Delta_*(\mu(A)^{-1}\mu\restriction_{A} )=\nu(B)^{-1}\nu\restriction_{B}$ and such that
\begin{align}\label{eq:SOE-defi}
\Delta(G.a \cap A)=H.\Delta(a)\cap B \text{ for all } a\in A.
\end{align}
\end{itemize}
\end{defi}

\begin{rem}
Note that the conull subsets in the definition of orbit equivalence are required to be Borel, but reasoning as in Remark \ref{rem:borel-vs-non-borel} one sees that this is equivalent to the corresponding definition without the Borel requirement (and with measures replaced by completed measures). Note also that if the two actions are orbit equivalent so are their restrictions to any choice of conull, invariant subsets in $X$ and $Y$, respectively.
\end{rem}

\begin{rem}\label{rem:soe}
Note that if $G$ and $H$ are non-discrete lcsc groups that admit stably orbit equivalent, ergodic actions then the original actions are actually already orbit equivalent \cite[Lemma~1.20]{carderi-maitre}. Thus, the notion of stable orbit equivalence is really only of interest for discrete groups.
\end{rem}

\begin{lem}
  \label{lem:cocycles}
 If $G\curvearrowright (X,\mu)$ and $H\curvearrowright (Y,\nu)$ are non-singular, essentially free actions of lcsc groups $G$ and $H$ on standard Borel probability spaces and if $\Delta\colon X \rightarrow Y$ is an orbit equivalence between the two actions, then there exist  measurable cocycles $c\colon G\times X\to H$ and $d\colon H\times Y\to G$ with the properties that:
 \begin{itemize}
\item [(i)] for all $g\in G$  the relation $\Delta(g.x)=c(g,x).\Delta(x)$ holds for almost all $x\in X$;
\item[(ii)]  for all $h\in H$ the relation $\Delta^{-1}(h.y)=d(h,y).\Delta^{-1}(y)$ holds for almost all $y\in Y$.
\end{itemize}  
Here $\Delta^{-1}$ denotes any Borel extension to $Y$ of $\Delta_0^{-1}\colon Y_0\to X_0$.   
\end{lem}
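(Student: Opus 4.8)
The plan is to construct $c$ by exploiting essential freeness of the $H$-action: for a pair of points lying in a common $H$-orbit, freeness singles out a \emph{unique} group element relating them, and the assignment $(g,x)\mapsto c(g,x)$ will be read off from this. Once $c$ is shown to be Borel, the cocycle identity and property~(i) are formal consequences of the orbit-equivalence relation~\eqref{eq:OE-defi}, and the cocycle $d$ is obtained by running the identical argument with the roles of $(X,G)$ and $(Y,H)$ interchanged and $\Delta$ replaced by a Borel extension $\Delta^{-1}$ of $\Delta_0^{-1}$.

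First I would pass to the free part. Since both actions are essentially free, there are conull, invariant Borel subsets $X_1\subseteq X_0$ and $Y_1\subseteq Y_0$ on which the $G$- and $H$-actions are genuinely free, and after replacing $X_1$ by the conull invariant set $X_1\cap\Delta_0^{-1}(Y_1)$ we may assume $\Delta_0(X_1)\subseteq Y_1$. For $x\in X_1$ and $g\in G$ with $g.x\in X_1$, relation~\eqref{eq:OE-defi} gives $\Delta_0(g.x)\in H.\Delta_0(x)$, so by freeness of the $H$-action there is a unique $h\in H$ with $\Delta_0(g.x)=h.\Delta_0(x)$, and I set $c(g,x):=h$.

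The main obstacle is to verify that $c$ is Borel. For this I would consider the Borel map $\theta\colon H\times Y_1\to Y_1\times Y_1$, $(h,y)\mapsto(y,h.y)$, which is injective precisely because the $H$-action on $Y_1$ is free. By the Lusin--Souslin theorem its image $R\subseteq Y_1\times Y_1$ is Borel and $\theta^{-1}\colon R\to H\times Y_1$ is Borel; composing with the projection onto $H$ produces a Borel map $\sigma\colon R\to H$ sending a pair in a common $H$-orbit to the unique element carrying the first point to the second. The map $(g,x)\mapsto(\Delta_0(x),\Delta_0(g.x))$ is Borel (the action map $G\times X\to X$ is measurable and $\Delta_0$ is Borel) and takes values in $R$ on the Borel set $\{(g,x)\in G\times X_1 : g.x\in X_1\}$, so $c(g,x)=\sigma(\Delta_0(x),\Delta_0(g.x))$ is Borel there; I then extend $c$ arbitrarily to a Borel map on all of $G\times X$.

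It remains to check the cocycle relation and property~(i). For fixed $g_1,g_2\in G$, non-singularity guarantees that $x$, $g_2.x$ and $g_1g_2.x$ all lie in $X_1$ for almost every $x$, and for such $x$ the chain $(c(g_1,g_2.x)c(g_2,x)).\Delta_0(x)=c(g_1,g_2.x).\Delta_0(g_2.x)=\Delta_0(g_1g_2.x)=c(g_1g_2,x).\Delta_0(x)$ together with freeness yields $c(g_1g_2,x)=c(g_1,g_2.x)c(g_2,x)$; thus $c$ is a measurable cocycle. Property~(i) holds by construction, since $\Delta=\Delta_0$ on $X_0$ and, for fixed $g$, the set of $x\in X_1$ with $g.x\in X_1$ is conull by non-singularity. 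Applying the same reasoning to $\Delta^{-1}$ produces the cocycle $d\colon H\times Y\to G$ satisfying~(ii).
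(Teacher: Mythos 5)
Your proof is correct and follows essentially the same route as the paper's: pass to the free parts, use freeness of the $H$-action to read off the unique element $c(g,x)$ from the orbit relation on the conull set $\{(g,x): x\in X_0\cap g^{-1}.X_0\}$, verify the cocycle identity pointwise on a conull set for each pair $g_1,g_2$, and obtain $d$ by symmetry. Your explicit Lusin--Souslin verification that $c$ is Borel is a welcome detail that the paper leaves implicit; the only (harmless) imprecision is that the free parts intersected with $X_0$ and $Y_0$ need not be invariant, but invariance is never actually used in your argument.
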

\begin{proof}
The set of points with trivial stabilizer is invariant, conull and Borel  \cite[Lemma 10]{stefaan-sven-niels}, so we may assume that both actions are genuinely free.
Fix $X_0 $ and $Y_0$ as in the definition and a $g\in G$. Then $g^{-1}X_0\cap X_0:=X_g$ is conull in $X$ and for $x\in X_g$ we have $\Delta(g.x)\in H.\Delta(x)\cap Y_0$ so there exists a unique (by freeness) element $c(g,x)\in H$ such that $\Delta(g.x)=c(g,x).\Delta(x)$. This defines a map
\[
c\colon \{(g,x)\in G\times X \mid x\in X_g\} \to H,
\]
and the domain of $c$ is conull in $G\times X$ by Fubini, since each $X_g$ is conull in $X$. Extend $c$ to all of $G\times X$ by mapping elements in the complement of $\{(g,x)\in G\times X \mid x\in X_g\} $ to $e\in H$. Then $c$ is a measurable cocycle, because for all $g_1,g_2\in G$ and all $x$ in the conull subset $X_{g_1g_2}\cap g_2^{-1}.X_{g_1} \cap X_{g_2}$ we have
\[
c(g_1g_2,x).\Delta(x)=\Delta(g_1g_2.x)=\Delta(g_1.(g_2.x))=c(g_1,g_2.x).\Delta(g_2.x)=c(g_1,g_2.x)c(g_2,x).\Delta(x),
\]
and hence $c(g_1g_2,x)=c(g_1,g_2.x)c(g_2,x)$ by freeness. Moreover, by construction $\Delta(g.x)=c(g,x).\Delta(x)$ for all $x$ in the conull subset $X_g$. The existence of the cocycle $d$ follows by symmetry.
\end{proof}

The cocycles constructed in Lemma \ref{lem:cocycles} are compatible with the natural measures on $G \times X$ and $H \times X$ as the next lemma describes.  It will be used in the proof of Lemma \ref{oe-implies-me}
\begin{lem}
  \label{lem:cocycle-measure-preserving}
  Let $G$ and $H$ be lcsc groups and let $(X, \mu)$ be an essentially free, ergodic, pmp action on standard Borel probability $G$-space and $(Y, \nu)$ an essentially free, ergodic, pmp, standard Borel probability $H$-space.
 If $\Delta\colon X \rightarrow Y$ is an orbit equivalence between the two actions and $c\colon G \times X \rightarrow H$ the associated cocycle, then $\Phi \colon G \times X \to H \times Y,  (g,x) \mapsto (c(g,x), \Delta(x)),$ satisfies $\Phi_*[\lambda_G \times \mu] = [\lambda_H \times \nu]$.
\end{lem}
\begin{proof}
  The push-forward $\Phi_*[\lambda_G \times \mu]$ projects onto the class  $\Delta_*[\mu] = [\nu]$.  Further, it satisfies the condition of Definition 2.3 in \cite{hahn-haar-measure-groupoids}, saying that $(H \times Y, \Phi_*[\lambda_G \times \mu])$ is a measure groupoid.  So the uniqueness statement for invariant measures classes on groupoids formulated as Proposition 3.4 of \cite{feldman-hahn-moore} applies and shows that $\Phi_*[\lambda_G \times \mu] = [\lambda_H \times \nu]$.
\end{proof}


\subsection{Cross section equivalence relations}
\label{sec:cross-sections}

In this section we briefly recall the notion of a cross section for an action of a locally compact group, which is originally due to Forrest \cite{Forrest} and more recently treated in \cite{KPV} and \cite{carderi-maitre}.  Let $G$ be an lcsc  group  and let $(X,\mu)$ be a standard Borel probability space endowed with a non-singular, essentially free action $\theta\colon G \curvearrowright (X,\mu)$. It is well known that it is, in general, impossible to choose a Borel subset of $X$ meeting every orbit exactly once, but by \cite[Proposition 2.10]{Forrest} one may find a Borel subset $X_0\subset X$ and an open neighbourhood of the identity $U\subset G$ such that 
\begin{itemize}
\item[(i)] the restricted action map $\theta\restriction \colon U\times X_0 \to X$ is injective, and
\item[(ii)] the subset $G.X_0$ is Borel and conull in $X$.
\end{itemize}
A subset $X_0\subset X$ with the above mentioned properties is called a \emph{cross section} of the action $G\curvearrowright (X,\mu)$. Note that since $G$ is assumed second countable, if $\theta\restriction \colon U\times X_0 \to X$ is injective then $\theta\restriction\colon G\times X_0\to X$ is countable-to-one and hence
the set 
\[
Z:=\{(x,x_0)\in X\times X_0\mid x\in G.x_0 \} 
\]
is Borel and the projection $\pi_l\colon Z\to X$ is countable-to-one. One may therefore  define a $\sigma$-finite Borel measure $\rho$ on $Z$ by setting
\[
\rho(E):=\int_{X}|\pi_l^{-1}(x)\cap E| \, d\mu(x),
\] 
where $|\cdot |$ here, and in what follows, denotes the cardinality of the set in question.
In the situation where $G$ is unimodular and the action is assumed measure preserving one has the following.

\begin{prop}[{cf.~\cite[Proposition 4.3]{KPV}}]\label{prop:cross-section}
Let $G$ be an unimodular lcsc  group and $(X,\mu)$ be a standard Borel probability space endowed with an essentially free, pmp action $G\overset{\theta}{\curvearrowright} (X,\mu)$. Denote by $X_0\subset X$ a cross section and fix a Haar measure $\lambda_G$ on $G $. Then the following hold.
\begin{enumerate}
\item The set $\R_{X_0}:=\{(x,x')\in X_0\times X_0 \mid x\in G.x'\}$ is a Borel equivalence relation with countable orbits.
\item 

There exist a unique Borel probability measure $\mu_0$ on $X_0$ and a number $\covol(X_0)\in ]0,\infty[$ such that the map $\Psi\colon G\times X_0\to Z$ given by $\Psi(g,x_0):=(g.x_0,x_0)$ satisfies
\begin{align*}
\Psi_*(\lambda_G\times \mu_0)=\covol(X_0)\rho,
\end{align*}
 where $Z$ and $\rho$ are as defined above.
Thus,  whenever $U\subset G$ is an open identity neighbourhood with $\theta\restriction\colon U\times X_0 \to X$ injective then 
\[
(\theta\restriction)_* (\lambda_G\restriction_U \times \mu_0)=\covol(X_0)\mu\restriction_{U.X_0}.
\]
Moreover, the measure $\mu_0$ is invariant under the equivalence relation $\R_{X_0}$.
\item  The action $G\curvearrowright (X,\mu)$ is ergodic if and only if $(\R_{X_0},\mu_0)$ is ergodic and in this case the equivalence relation associated with another choice of cross section is stably orbit equivalent to $\R_{X_0}$.
\item The group $G$ is  amenable if and only if $(\R_{X_0},\mu_0)$ is amenable.
\end{enumerate}
\end{prop}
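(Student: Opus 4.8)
The plan is to establish the four assertions in turn, the analytic heart being (2), where Lemma~\ref{lem:invariance} does the decisive work; unimodularity and ergodicity then drive the remaining clauses. For (1), I would observe that the orbit equivalence relation $\{(x,y)\in X\times X\mid y\in G.x\}$ is the image of the Borel map $G\times X\to X\times X$, $(g,x)\mapsto(x,g.x)$, which is countable-to-one on the conull Borel set of points with trivial stabiliser; by the Lusin--Novikov uniformisation theorem this image is Borel. Intersecting with $X_0\times X_0$ exhibits $\R_{X_0}$ as Borel, and countability of its classes is immediate from the recalled fact that $\theta\restriction\colon G\times X_0\to X$ is countable-to-one, so each orbit meets $X_0$ in a countable set.

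The core is (2). Essential freeness makes $\Psi\colon G\times X_0\to Z$, $\Psi(g,x_0)=(g.x_0,x_0)$, a Borel bijection: surjectivity is the definition of $Z$, while the second coordinate recovers $x_0$ and freeness then recovers $g$, giving injectivity; hence $\Psi$ is a Borel isomorphism and it suffices to analyse $\sigma:=(\Psi^{-1})_*\rho$ on $G\times X_0$. Next I would verify that $\rho$ is invariant under the action $g_1.(x,x_0)=(g_1.x,x_0)$ on $Z$: a change of variables in the integral defining $\rho$, using that $\pi_l$ intertwines this action with $\theta$, that $\mu$ is $G$-invariant, and that $G$ is unimodular, yields $\rho(g_1.E)=\rho(E)$. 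Since $\Psi$ conjugates this action to left translation in the first coordinate, $\sigma$ is a $\sigma$-finite, left-translation-invariant measure on $G\times X_0$, and Lemma~\ref{lem:invariance}(ii)---applied after normalising $\sigma$ to a probability measure in its class and projecting to $X_0$---forces $\sigma=\lambda_G\times\tilde\mu_0$ for a measure $\tilde\mu_0$ on $X_0$. Normalising $\tilde\mu_0$ to a probability measure $\mu_0$ and absorbing its total mass into a positive constant $\covol(X_0)$ yields $\Psi_*(\lambda_G\times\mu_0)=\covol(X_0)\rho$. Finiteness and positivity of $\covol(X_0)$ come from testing against $\Psi(U\times X_0)$: local injectivity of $\theta\restriction$ on $U\times X_0$ makes $\pi_l$ injective there, so $\rho(\Psi(U\times X_0))=\mu(U.X_0)$, which lies in $]0,\infty[$ because $G$ is $\sigma$-compact and $G.X_0$ is conull; the same computation, via $\theta=\pi_l\circ\Psi$, produces the displayed identity $(\theta\restriction)_*(\lambda_G\restriction_U\times\mu_0)=\covol(X_0)\mu\restriction_{U.X_0}$. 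Uniqueness follows since two solutions give proportional products $\lambda_G\times\mu_0$ and the probability normalisation pins down both the constant and the measure. For the $\R_{X_0}$-invariance of $\mu_0$ I would write an arbitrary partial $\R_{X_0}$-isomorphism locally as $x\mapsto\gamma(x).x$ for a Borel $\gamma\colon A\to G$ (possible by freeness and Lusin--Novikov) and combine the local product structure with left-invariance of $\lambda_G$; here unimodularity is exactly what guarantees measure preservation.

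For (3), a Borel set $A\subset X_0$ is $\R_{X_0}$-invariant precisely when its $G$-saturation is $G$-invariant in $X$, and the covolume formula matches null and conull sets on the two sides (using Remark~\ref{rem:strong-ergodicity} to reduce ergodicity to genuinely invariant sets); ergodicity therefore passes back and forth. Given two cross sections $X_0,X_1$, I would view $\R_{X_0}$ and $\R_{X_1}$ as the restrictions of the countable Borel equivalence relation generated by the $G$-orbits on $X_0\cup X_1$ to the complete Borel transversals $X_0$ and $X_1$; restriction of an ergodic pmp countable equivalence relation to a complete section yields a stably orbit equivalent relation, and ergodicity ensures both transversals meet almost every class. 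For (4), I would use that for an essentially free pmp action the action groupoid $G\ltimes(X,\mu)$ and the cross-section relation $\R_{X_0}$ are Morita equivalent, so that amenability of one is equivalent to amenability of the other, together with the fact that $G$ is amenable if and only if $G\ltimes(X,\mu)$ is amenable for a pmp action.

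The main obstacle I expect is part (2), specifically the $\R_{X_0}$-invariance of $\mu_0$: identifying the product structure via Lemma~\ref{lem:invariance} is clean, but verifying that the local transformations $x\mapsto\gamma(x).x$ preserve $\mu_0$---where unimodularity enters essentially and the null-set bookkeeping between $X$, $Z$ and $X_0$ must be handled carefully---is the delicate point, as is the analogous null-set transfer underlying the ergodicity equivalence in (3).
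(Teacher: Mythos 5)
The paper does not prove this proposition at all: it is imported verbatim from \cite[Proposition 4.3]{KPV}, so there is no in-text argument to measure you against. Judged on its own, your proposal is essentially correct, and it has the attractive feature of staying inside the paper's own toolkit: deriving the product structure of $(\Psi^{-1})_*\rho$ from Lemma \ref{lem:invariance}(ii) (after normalising to a probability measure and projecting to $X_0$) is a clean substitute for the direct disintegration of $\mu\restriction_{U.X_0}$ over $X_0$ that one would otherwise carry out, and your identification of $\covol(X_0)$ as $\lambda_G(U)/\mu(U.X_0)$ by testing on $\Psi(U\times X_0)$ matches the normalisation the paper uses later in the proof of Lemma \ref{lem:SOE-cross-eq-implies-OE-actions}. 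Your sketches of (3) and (4) --- transferring null sets between $G$-invariant subsets of $X$ and $\R_{X_0}$-invariant subsets of $X_0$ via the covolume identity together with Remark \ref{rem:strong-ergodicity}, realising $\R_{X_0}$ and $\R_{X_1}$ as restrictions of one countable Borel relation on $X_0\cup X_1$ to two complete sections, and comparing amenability through the equivalence of the groupoid $G\ltimes (X,\mu)$ with its reduction $\R_{X_0}$ --- are the standard routes and are sound.

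Two points need tightening. First, the action is only \emph{essentially} free, so $\Psi$ is injective only over the conull $G$-invariant Borel set $X^{\mathrm{fr}}$ of points with trivial stabiliser (Borel by \cite[Lemma 10]{stefaan-sven-niels}, which the paper invokes elsewhere); you should run the argument over $G\times(X_0\cap X^{\mathrm{fr}})$ and then check a posteriori that $\mu_0(X_0\setminus X^{\mathrm{fr}})=0$, which follows since $U.(X_0\setminus X^{\mathrm{fr}})$ lies in the $\mu$-null set $X\setminus X^{\mathrm{fr}}$. Relatedly, in (1) the map to feed into Lusin--Novikov is the countable-to-one map $\theta\restriction\colon G\times X_0\to X$ (countable-to-one with no freeness assumption, by covering $G$ with countably many translates of $U$), not the orbit map on $X$; your phrasing conflates the two. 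Second, your invariance computation for $\rho$ under $g_1.(x,x_0)=(g_1.x,x_0)$ uses only $G$-invariance of $\mu$ --- unimodularity is irrelevant there. It enters exactly where you locate the main difficulty: for the $\R_{X_0}$-invariance of $\mu_0$ one transports $\lambda_G\times\mu_0\restriction_A$ through $(g,x)\mapsto(g\gamma(x)^{-1},\gamma(x).x)$, matches the result against $\lambda_G\times\mu_0\restriction_B$ using the fibrewise bijectivity of $(x,a)\mapsto(x,\phi(a))$ on $\pi_l$-fibres of $Z$, and then needs right-translation invariance of $\lambda_G$ to cancel the $\gamma(x)^{-1}$; writing out that transport explicitly would complete the one step you leave genuinely schematic.
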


The equivalence  relation $\R_{X_0}$ is referred to as a \emph{cross section equivalence relation} for the action $G\curvearrowright (X,\mu)$.
For background material about countable equivalence relations and their properties we refer to \cite{FM1} and \cite{Connes-Feldman-Weiss}. At this point we just single out our definition of stable orbit equivalence, since we were unable to find a suitable reference for this in the non-ergodic case (see eg.~\cite{Furman-OE-rigidity} for the ergodic case).
\begin{defi}\label{def:OE-for-eq-rel}
Let $\R$ and $\S$ be  countable Borel measure preserving equivalence relations on  standard Borel probability spaces $(X,\mu)$ and $(Y,\nu)$. Then $\R$ and $\S$ are said to be \emph{stably orbit equivalent} if there exist non-negligible Borel subsets $A\subset X$ and $B\subset Y$ and a Borel isomorphism $\Delta\colon A\to B$ such that:
\begin{itemize}
\item[(i)] the $\R$-saturation of  $A$, i.e.~the set $\{x\in X\mid \exists a\in A: (x,a)\in \R\}$, is conull in $X$ and the $\S$-saturation of $B$ is conull in $Y$;
\item[(ii)] for all $a,a'\in A$: $(a,a')\in \R$ if and only if $(\Delta(a), \Delta(a'))\in \S$;
\item[(iii)] we have $\Delta_*(\mu(A)^{-1}\mu\restriction_A)=\nu(B)^{-1}\nu\restriction_B$.
\end{itemize}
\end{defi}

\subsection{Equivalence of measure equivalence and stable orbit equivalence}
\label{sec:equivalence-me-soe}
The aim in the section is to provide a proof of Theorem \ref{thm:me-oe-soe}; more precisely we prove the following.

\begin{thm}\label{thm:me-and-soe-equivalence}
For unimodular, lcsc groups $G$ and $H$ the following are equivalent.
\begin{itemize}
\item[(i)] $G$ and $H$ are measure equivalent.
\item[(ii)] $G \times \mathrm S^1$ and $H \times \mathrm S^1$ admit  orbit equivalent, essentially free, ergodic, pmp actions on standard Borel probability spaces.
\item[(iii)] $G$ and $H$ admit essentially free, ergodic, pmp actions on standard Borel probability spaces for which the cross section equivalence relations associated with some (equivalently any) choice of cross sections are stably orbit equivalent.
\end{itemize}
If both groups $G$ and $H$ 
are discrete,  (ii) can be replaced by
\begin{itemize}
\item[(ii)'] $G$ and $H$ admit stably orbit equivalent, essentially free, ergodic, pmp actions on standard Borel probability spaces.
\end{itemize}
If both $G$ and $H$ are non-discrete, (ii) can be replaced by
\begin{itemize}
\item[(ii)''] $G$ and $H$ admit orbit equivalent, essentially free, ergodic, pmp actions on standard Borel probability spaces.
\end{itemize}

\end{thm}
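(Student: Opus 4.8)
My plan is to prove the cyclic chain $(i)\Rightarrow(iii)\Rightarrow(ii)\Rightarrow(i)$ and to read off the two refinements at the end. For the whole argument I first reduce, using Proposition \ref{prop:free-ergodic-coupling}, to a strict, free and ergodic coupling $(\Omega,\eta,X,\mu,Y,\nu,i,j)$; strictness means that $i\colon G\times Y\to\Omega$ and $j\colon H\times X\to\Omega$ are genuinely equivariant Borel isomorphisms. The $H$-action on $\Omega$ then descends through the $G$-fundamental domain $Y\cong\Omega/G$ to a pmp action $H\curvearrowright(Y,\nu)$, and symmetrically one obtains $G\curvearrowright(X,\mu)$; both are essentially free and ergodic, since stabilisers and invariant subsets correspond to those of the free, ergodic $G\times H$-action on $\Omega$ under the fundamental-domain identifications.

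For $(i)\Rightarrow(iii)$ I would realise the stable orbit equivalence on a common transversal inside $\Omega$. Applying Forrest's cross section theorem \cite[Proposition 2.10]{Forrest} to the essentially free action $G\times H\curvearrowright\Omega$ yields a cross section $\Omega_0\subset\Omega$; after discarding a null set and restricting to a non-negligible Borel piece on which the two projections $\Omega\to X$ and $\Omega\to Y$ are injective, the images $X_0\subset X$ and $Y_0\subset Y$ are cross sections for the two factor actions, and the cross section equivalence relations $\R_{X_0}$ and $\S_{Y_0}$ of Proposition \ref{prop:cross-section} are both induced by the single $G\times H$-orbit relation carried on $\Omega_0$. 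This common origin produces the subsets $A\subset X_0$, $B\subset Y_0$ and the Borel isomorphism $\Delta\colon A\to B$ of Definition \ref{def:OE-for-eq-rel}; the saturation conditions are immediate from conullity of $G.X_0$ and $H.Y_0$, while the normalisation $\Delta_*(\mu_0(A)^{-1}\mu_0|_A)=\nu_0(B)^{-1}\nu_0|_B$ is forced by matching the covolume identities of Proposition \ref{prop:cross-section} on the two sides. The clause ``some (equivalently any)'' is exactly Proposition \ref{prop:cross-section}(3).

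For $(iii)\Rightarrow(ii)$ I pass to the amplifications $G\times\mathrm S^1\curvearrowright X\times\mathrm S^1$ and $H\times\mathrm S^1\curvearrowright Y\times\mathrm S^1$, where $\mathrm S^1$ acts by translation on the new coordinate and trivially on $X$, respectively $Y$; these actions are free, ergodic and pmp, and the groups are now non-discrete. A direct check shows that their cross section equivalence relations are again $\R_{X_0}$ and $\S_{Y_0}$, the circle only rescaling the covolume. Via the suspension description underlying Proposition \ref{prop:cross-section}, the stable orbit equivalence $\R_{X_0}\sim\S_{Y_0}$ lifts to a stable orbit equivalence of the two amplified actions, and since $G\times\mathrm S^1$ and $H\times\mathrm S^1$ are non-discrete, Remark \ref{rem:soe} upgrades it to an honest orbit equivalence, giving $(ii)$. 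For $(ii)\Rightarrow(i)$ I run the orbit-equivalence-implies-measure-equivalence construction of Remark \ref{rem:oe-similarity} and Lemma \ref{oe-implies-me}, obtaining a measure equivalence between $G\times\mathrm S^1$ and $H\times\mathrm S^1$; as $G$ sits as a closed cocompact subgroup of $G\times\mathrm S^1$, an explicit coupling (for instance $G\times\mathrm S^1$ itself, with left multiplication and right multiplication on the $G$-leg) shows that $G$ and $G\times\mathrm S^1$ are measure equivalent, and transitivity then yields $(i)$.

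The refinement $(ii)'$ is immediate: for discrete groups a singleton neighbourhood of the identity makes the whole space a cross section, so $\R_{X_0}$ is the orbit relation of $G\curvearrowright X$ and $(iii)$ already asserts stable orbit equivalence of the actions themselves, no amplification being needed. The refinement $(ii)''$ follows because for non-discrete groups Remark \ref{rem:soe} lets one discard $\mathrm S^1$ and conclude genuine orbit equivalence of $G\curvearrowright X$ and $H\curvearrowright Y$ directly. I expect the genuine difficulty to lie in $(i)\Rightarrow(iii)$: one must verify that a cross section for $G\times H\curvearrowright\Omega$ can be cut down so that its two projections are bona fide cross sections for the factor actions with the Borel regularity demanded by Proposition \ref{prop:cross-section}, and then track the Haar normalisations and covolumes through the disintegrations so that the normalised cross section measures match under $\Delta$. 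This is precisely where the measure-theoretic subtleties emphasised in the paper --- non-Borel saturations, the passage to conull invariant subsets, and uniqueness of disintegrations --- must be controlled, and Lemma \ref{lem:invariance} together with Proposition \ref{prop:cross-section} are the tools I would rely on to pin the relevant measures down up to the scalars recording the covolumes.
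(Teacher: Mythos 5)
Your overall architecture --- the cycle (i)$\Rightarrow$(iii)$\Rightarrow$(ii)$\Rightarrow$(i), the amplification by $\mathrm S^1$, and the use of cocompactness of $G$ in $G\times\mathrm S^1$ to close the loop --- coincides with the paper's, and your steps (iii)$\Rightarrow$(ii) and (ii)$\Rightarrow$(i) match Lemma \ref{lem:SOE-cross-eq-implies-OE-actions} and Lemma \ref{oe-implies-me}. The gap is in (i)$\Rightarrow$(iii). First, you propose to take a cross section $\Omega_0$ for $G\times H\curvearrowright\Omega$ and project it to obtain cross sections $X_0\subset X$ and $Y_0\subset Y$ for the factor actions. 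This direction does not work as stated: if $U\times V\subset G\times H$ is an identity neighbourhood for which $(U\times V)\times\Omega_0\to\Omega$ is injective, injectivity of the action map on $U\times\pi_X(\Omega_0)$ does not follow, since two points $u_1.\omega_1$ and $u_2.\omega_2$ may lie in the same $H$-orbit via an element of $H$ far outside $V$. The paper's Lemma \ref{lem:ME-implies-SOE-cross} goes the opposite way: it starts from cross sections $X_0$ and $Y_0$ of the factor actions and checks that $j(\{e_H\}\times X_0)$ and $i(\{e_G\}\times Y_0)$ are cross sections for $G\times H\curvearrowright\Omega$; that is the direction in which injectivity propagates.

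Second, and more seriously, the measure condition of Definition \ref{def:OE-for-eq-rel} --- that $\Delta$ carries $\mu_0(A)^{-1}\mu_0\restriction_A$ to $\nu_0(B)^{-1}\nu_0\restriction_B$ --- is the technical heart of the implication, and you assert it is ``forced by matching the covolume identities'' without an argument. In the paper this is the identity $\covol(X_0)\gamma_r=\covol(Y_0)\gamma_l$ relating the two natural measures on the set of matched pairs of transversal points; its proof occupies the two Claims in Lemma \ref{lem:ME-implies-SOE-cross} and uses unimodularity of $G\times H$ through the measure-preserving reparametrisation $\zeta(k,x_0,y_0)=(k\Lambda(x_0,y_0),x_0,y_0)$. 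Nothing in Proposition \ref{prop:cross-section} alone produces this; it genuinely requires a computation on $\Omega$. Likewise, producing a single partial isomorphism $\Delta\colon A\to B$ whose saturations are conull is handled in the paper by a maximality argument over partial matchings of the two transversals; your appeal to ``conullity of $G.X_0$ and $H.Y_0$'' does not address the saturations of the smaller sets $A$ and $B$ (in the ergodic case this can be rescued by ergodicity of $\R_{X_0}$, but that is a different argument from the one you give). Until these two points are supplied, the implication (i)$\Rightarrow$(iii) is not established.
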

Here, and in what follows, $ \mathrm S^1$ denotes the circle group.
We split up the proof of Theorem \ref{thm:me-and-soe-equivalence} into several lemmas, some of which are, in the interest of future reference, stated in slightly more generality than needed for  Theorem \ref{thm:me-and-soe-equivalence}.

\begin{lemma}
\label{lem:SOE-cross-eq-implies-OE-actions}
If $G$ and $H$ are non-discrete, unimodular, lcsc groups with essentially free, ergodic, pmp actions $G\curvearrowright (X,\mu)$ and $H\curvearrowright (Y,\nu)$ on standard Borel probability spaces  and if $X_0\subset X$ and $Y_0\subset Y$ are cross sections for the two actions whose associated cross section equivalence relations are stably orbit equivalent then the original actions are orbit equivalent.
\end{lemma}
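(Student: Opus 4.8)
The plan is to transfer the stable orbit equivalence of the cross section equivalence relations to the ambient group actions by exploiting the local product structure furnished by Proposition~\ref{prop:cross-section}, and then to upgrade the resulting stable orbit equivalence of the \emph{actions} to a genuine orbit equivalence using non-discreteness.

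First I would unpack the hypothesis: by Definition~\ref{def:OE-for-eq-rel} there are non-negligible Borel sets $A_0\subseteq X_0$ and $B_0\subseteq Y_0$ and a Borel isomorphism $\Delta\colon A_0\to B_0$ with $(a,a')\in\R_{X_0}\Leftrightarrow(\Delta a,\Delta a')\in\R_{Y_0}$, with conull $\R_{X_0}$- and $\R_{Y_0}$-saturations, and with $\Delta_*(\mu_0(A_0)^{-1}\mu_0\restriction_{A_0})=\nu_0(B_0)^{-1}\nu_0\restriction_{B_0}$. I would observe that $A_0$ is again a cross section for $G\curvearrowright X$: injectivity of the action map on $U\times X_0$ restricts to $U\times A_0$, and $G.A_0$ is conull because the $\R_{X_0}$-saturation of $A_0$ is conull in $X_0$ while $G.X_0$ is conull in $X$. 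The same applies to $B_0$, so after replacing $X_0,Y_0$ by $A_0,B_0$ --- and checking, via the uniqueness clause of Proposition~\ref{prop:cross-section}(2) together with $\R$-invariance of the restricted measures, that the normalised restricted measures coincide with the cross section measures of $A_0$ and $B_0$ --- I may assume that $\Delta\colon X_0\to Y_0$ is a Borel isomorphism inducing an isomorphism $\R_{X_0}\cong\R_{Y_0}$ with $\Delta_*\mu_0=\nu_0$.

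The heart of the argument is the transfer step. I would pick identity neighbourhoods $U\subseteq G$ and $V\subseteq H$ on which the action maps are injective, together with a measure isomorphism $\psi\colon U\to V$ carrying the normalised restriction of $\lambda_G$ to that of $\lambda_H$; such $\psi$ exists since both are non-atomic standard probability spaces. By Proposition~\ref{prop:cross-section}(2) the action maps give Borel isomorphisms (``flow boxes'') $U\times X_0\to U.X_0=:A$ and $V\times Y_0\to V.Y_0=:B$ under which $\mu\restriction_A$ and $\nu\restriction_B$ correspond to $\covol(X_0)^{-1}\lambda_G\restriction_U\times\mu_0$ and $\covol(Y_0)^{-1}\lambda_H\restriction_V\times\nu_0$. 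Transporting $\psi\times\Delta$ through these charts yields a Borel isomorphism $F\colon A\to B$. Using the identity $G.a\cap A=U.(G.a\cap X_0)$ and its counterpart on the $H$-side one checks that $F(G.a\cap A)=H.F(a)\cap B$ for all $a\in A$, and a Fubini computation shows that $F$ matches the normalised measures. Since $G.A=G.X_0$ and $H.B=H.Y_0$ are conull, this exhibits $G\curvearrowright(X,\mu)$ and $H\curvearrowright(Y,\nu)$ as stably orbit equivalent in the sense of Definition~\ref{def:soe}.

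Finally, as $G$ and $H$ are non-discrete, I would invoke the fact that stably orbit equivalent essentially free actions of non-discrete lcsc groups are already orbit equivalent (Remark~\ref{rem:soe}, i.e.~\cite[Lemma~1.20]{carderi-maitre}), after reducing to the trivial-stabiliser set \cite[Lemma~10]{stefaan-sven-niels} and, if necessary, passing to ergodic components to meet its hypotheses; this yields the desired orbit equivalence of the original actions. The main obstacle is the measure-theoretic bookkeeping in the transfer step: one must verify on genuinely conull Borel subsets, rather than merely almost everywhere, that the flow box charts intertwine the restricted measures with the product measures and that $F$ carries orbit intersections to orbit intersections. This is precisely where Proposition~\ref{prop:cross-section} and the careful treatment of saturations (null versus Borel) do the work, and it is the step most likely to hide subtleties.
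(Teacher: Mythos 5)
Your proposal is correct and follows essentially the same route as the paper's proof: transport the stable orbit equivalence $\Delta$ of the cross section relations through the flow-box charts $U\times X_0\to U.X_0$ and $V\times Y_0\to V.Y_0$ using a measure isomorphism $U\to V$ of normalised Haar restrictions (available by non-discreteness), verify via the covolume formula of Proposition~\ref{prop:cross-section}(2) that the resulting map is a measure-scaling orbit-preserving isomorphism between non-negligible sets with conull saturations, and then invoke the Carderi--Le Ma\^itre result that stable orbit equivalence of actions of non-discrete groups upgrades to orbit equivalence. The only difference is your preliminary renormalisation replacing $X_0,Y_0$ by $A_0,B_0$, which the paper skips by working directly with $U.A$ and $V.B$; this is cosmetic.
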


\begin{proof}
  Let $U\subset G$ and $V\subset H$ be  precompact, open neighbourhoods of the identities in $G$ and $H$ such that the restricted actions maps $\alpha\colon U\times X_0 \to X$ and $\beta\colon V\times Y_0 \to Y$ are injective and hence Borel isomorphisms onto their image by Lemma \ref{lem:injective-borel}.  Since $G$ and $H$ are assumed non-discrete, we may choose an isomorphism of standard Borel probability spaces $f\colon (U, \lambda_G(U)^{-1}\lambda_G\restriction_U) \to (V, \lambda_H(V)^{-1}\lambda_H\restriction_H )$ (see \cite[Theorem (17.41)]{kechris-book}). By assumption there exist non-negligible Borel subsets $A\subset X_0 $ and $B\subset Y_0$ and a Borel isomorphism $\Delta\colon A\to B$ preserving orbits and the restricted (normalized) measures.  Then the map $\tilde{\Delta}\colon U.A \to V.B$ given by
\[
 U.A \overset{\alpha^{-1}}{\To} U\times A \overset{f\times \Delta}{\To} V\times B \overset{\beta}{\To} V.B
\]
is an isomorphism of probability measure spaces by Proposition \ref{prop:cross-section} (2).  Further, it preserves orbits, since $\Delta$ is an orbit equivalence and for $u_1,u_2\in U$ and $a_1,a_2\in A$ we have $u_1a_1 \sim u_2a_2 $ iff $a_1\sim a_2$.  It follows that $\tilde{\Delta}$ is an orbit equivalence, which shows that the original actions $G\curvearrowright (X,\mu)$ and $H\curvearrowright (Y,\nu)$ are stably orbit equivalent and since both $G$ and $H$ are assumed non-discrete the two actions are actually orbit equivalent by \cite[Lemma 1.19]{carderi-maitre}.
\end{proof}

\begin{lem}
\label{lem:ME-implies-SOE-cross}
If $G$ and $H$ are measure equivalent,  unimodular, lcsc groups then for any strict, essentially free, measure equivalence coupling $(\Omega,\eta,X,\mu,Y,\nu, i,j)$ the associated  pmp actions $G\curvearrowright (X,\mu)$ and $H\curvearrowright (Y,\nu)$ are essentially free and for any choice of cross sections $X_0\subset X$ and $Y_0\subset Y$ the associated cross section equivalence relations  are stably orbit equivalent.
\end{lem}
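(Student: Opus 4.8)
The plan is to treat the two assertions separately, exploiting that strictness of the coupling turns all the almost-everywhere structure into genuine, pointwise structure.

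First I would establish essential freeness. Since the coupling is strict, $i\colon G\times Y\to\Omega$ is a genuine $G$-equivariant Borel isomorphism and the residual $H$-action reads $h.i(g,y)=i(g\,\omega_G(h,y)^{-1},h.y)$, with $\omega_G$ a strict cocycle and $H\curvearrowright Y$ a genuine action; symmetrically for $j$ and $\omega_H$. By \cite[Lemma 10]{stefaan-sven-niels} the set $N\subset\Omega$ of points with non-trivial $G\times H$-stabiliser is Borel and $G\times H$-invariant, and it is $\eta$-null by essential freeness of the coupling. Pulling $N$ back through $i$ gives a null, $G$-invariant Borel set, necessarily of the form $G\times N_Y$; since $\lambda_G(G)>0$ the set $N_Y\subset Y$ is $\nu$-null. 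If now $h.y=y$ for some $y\notin N_Y$ then, taking $g=e_G$ and using $G$-equivariance, the element $(\omega_G(h,y),h)$ fixes $i(e_G,y)\notin N$, which forces $(\omega_G(h,y),h)=(e_G,e_H)$ and hence $h=e_H$. Thus $H\curvearrowright Y$ is essentially free, and the same argument applied to $j$ gives essential freeness of $G\curvearrowright X$.

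For the stable orbit equivalence I would realise both cross section relations as transversal relations of the single action $G\times H\curvearrowright(\Omega,\eta)$. Under $i$ the space $Y$ is identified with $\Omega/G$ together with its residual $H$-action, while under $j$ the space $X$ is identified with $\Omega/H$ together with its residual $G$-action; in particular two points of $X$ lie in one $G$-orbit exactly when the corresponding points of $\Omega$ lie in one $G\times H$-orbit, and likewise for $Y$. Given cross sections $X_0\subset X$ and $Y_0\subset Y$, the Borel sets $\Omega_0:=i(\{e_G\}\times Y_0)$ and $\Omega_0':=j(\{e_H\}\times X_0)$ meet almost every $G\times H$-orbit in a countable set (because $Y_0$, resp.\ $X_0$, meets every $H$-, resp.\ $G$-orbit countably) and have conull $G\times H$-saturation. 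The assignments $y_0\mapsto i(e_G,y_0)$ and $x_0\mapsto j(e_H,x_0)$ then identify $\R_{Y_0}$ with the relation induced by the $G\times H$-orbits on $\Omega_0$, and $\R_{X_0}$ with the one induced on $\Omega_0'$.

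It then remains to show that the countable relations cut out by $G\times H\curvearrowright\Omega$ on the two transversals $\Omega_0$ and $\Omega_0'$ are stably orbit equivalent. I would do this by means of the countable-to-countable Borel correspondence $C=\{(x_0,y_0)\in X_0\times Y_0 : j(e_H,x_0)\text{ and }i(e_G,y_0)\text{ lie in the same }G\times H\text{-orbit}\}$; a Lusin--Novikov uniformisation yields a Borel partial isomorphism, from which one selects non-negligible $A\subset X_0$ and $B\subset Y_0$ with conull saturations implementing the stable orbit equivalence of Definition \ref{def:OE-for-eq-rel}. The crux---and the step I expect to be the main obstacle---is the measure bookkeeping: one must check that under these identifications the cross section measures $\mu_0$ on $\Omega_0'$ and $\nu_0$ on $\Omega_0$ are, up to a single positive constant, the common transverse measure of $G\times H\curvearrowright\Omega$, so that the normalisation $\Delta_*(\mu_0(A)^{-1}\mu_0\restriction_A)=\nu_0(B)^{-1}\nu_0\restriction_B$ holds. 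This is delicate because $\eta$ is infinite (the $\lambda_G$-, resp.\ $\lambda_H$-direction contributes an infinite factor), because no ergodicity is assumed---so one cannot invoke the ergodic uniqueness statement of Proposition \ref{prop:cross-section}(3) and must compare the two transversals directly---and because saturations of null sets need not be null. The covolume identities of Proposition \ref{prop:cross-section}, applied to the probability-measure-preserving actions $G\curvearrowright(X,\mu)$ and $H\curvearrowright(Y,\nu)$, pin down $\mu_0$ and $\nu_0$, and the task is precisely to convert the local product structure $\eta\approx\lambda_{G\times H}\times(\text{transverse measure})$ furnished by $i$ and $j$ into the required comparison constant.
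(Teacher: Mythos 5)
Your overall architecture coincides with the paper's: the essential-freeness argument (pulling the Borel, invariant, null non-free locus back through $i$ and $j$), the identification of $\R_{X_0}$ and $\R_{Y_0}$ with the restrictions of the $G\times H$-orbit relation to the transversals $j(\{e_H\}\times X_0)$ and $i(\{e_G\}\times Y_0)$, and the countable correspondence $C$ (the paper's set $\S$) followed by uniformisation and an exhaustion argument are all exactly the steps the paper takes.

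The problem is that the step you yourself flag as ``the crux'' is the actual mathematical content of the lemma, and you do not supply it. Concretely, one must prove that the two natural $\sigma$-finite measures on $\S$ --- $\gamma_l$, obtained by integrating counting measure along the left projection against $\mu_0'$, and $\gamma_r$, obtained along the right projection against $\nu_0'$ --- satisfy $\covol(X_0)\,\gamma_r=\covol(Y_0)\,\gamma_l$. Without this identity there is no way to verify condition (iii) of Definition \ref{def:OE-for-eq-rel} for the partial isomorphism produced by uniformisation; the ``single positive constant'' you invoke has no proof. The paper obtains it by first establishing covolume formulas expressing the lifted measure $\eta_{\Z_{X_0'}}$ as $\covol(X_0)^{-1}$ times the push-forward of $\lambda_{G\times H}\times\mu_0'$ under $(k,x_0)\mapsto(k.x_0,x_0)$ (and the analogue for $Y_0'$), then parametrising the triple space $\Z$ in two ways, via $(k,x_0,y_0)\mapsto(k.x_0,x_0,y_0)$ and via $(k,x_0,y_0)\mapsto(k.y_0,x_0,y_0)$, and observing that these differ by the map $(k,x_0,y_0)\mapsto(k\Lambda(x_0,y_0),x_0,y_0)$, where $\Lambda(x_0,y_0)$ is the unique group element carrying $y_0$ to $x_0$; that this last map preserves $\lambda_{G\times H}\times\gamma_l$ is precisely where unimodularity of $G\times H$ enters, and it is the analytic heart of the lemma. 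A second, smaller omission: obtaining conull saturations requires not only the maximality argument but also the verification that a $G\times H$-invariant $\eta$-null subset of $\Omega$ meets the transversal in a $\mu_0'$-null set (saturations of null sets need not be null for non-discrete groups, as the paper emphasises throughout); this too is a consequence of the covolume formula and must be checked. As written, your text is a correct plan that stops exactly where the proof begins.
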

Note that measure equivalence couplings with the properties prescribed in Lemma \ref{lem:ME-implies-SOE-cross} always exist by Proposition \ref{prop:free-ergodic-coupling}. The statement in Lemma \ref{lem:ME-implies-SOE-cross} is implicit in the proofs of Proposition 4.3 and Corollary 4.6 in  \cite{KPV}, but for the convenience of the reader we provide a full proof below.
\begin{proof}
Let $(\Omega, \eta, X,\mu, Y,\nu, i,j)$ be an essentially free, strict measure equivalence coupling between $G$ and $H$,
and note that we may assume that it is genuinely free by Proposition \ref{prop:free-ergodic-coupling}.
 Since $G\times H\curvearrowright \Omega$ is free so are both the induced actions $G {\curvearrowright} (X,\mu)$ and $H{\curvearrowright} (Y,\nu)$
  and we can therefore find cross sections $X_0\subset X$ and $Y_0\subset Y$ with associated probability measures $\mu_0$ and $\nu_0$  satisfying the conditions in Proposition \ref{prop:cross-section} (2). 
Denote by $\R_{X_0}$ and $\R_{Y_0}$ the associated cross section equivalence relations; we now need to show that these are stably orbit equivalent.
Note that $X_0':=j(\{e_H\} \times X_0)$ and $Y_0':=i(\{e_G\} \times Y_0)$ are both cross sections\footnote{Strictly speaking, we have only defined cross sections for non-singular actions on probability spaces, but since $\eta$ is $\sigma$-finite we may replace it with an equivalent probability measure with respect to which the action is then non-singular.} for the action $G\times H\curvearrowright \Omega$ 
and we will now consider them with the probability measures $\mu_0':=j_*(\delta_{e_H}\times \mu_0)$ and $\nu_0':=i_*(\delta_{e_G}\times \nu_0)$. 
Note also that the map $x_0\mapsto j(e_H,x_0)$ is an orbit equivalence between $\R_{X_0}$ and the restriction to $X_0'$ of the orbit equivalence relation $\R_{G\times H}$ of $G\times H\curvearrowright \Omega$. It therefore suffices to show that $\R_{G\times H}\restriction_{X_0'}$ is stably orbit equivalent with $\R_{G\times H}\restriction_{Y_0'}$. For notational convenience we put $K:=G\times H$ and define
\begin{align*}
\S&:= \{(x_0,y_0)\in X_0'\times Y_0' \mid y_0\in K.x_0 \},\\
\mathcal{Z}&:=\{(t,x_0,y_0)\in \Omega \times X_0' \times Y_0' \mid K.t=K.x_0=K.y_0\},\\
\mathcal{Z}_{X_0'} &:=\{(t,x_0)\in \Omega \times X_0' \mid t\in K.x_0\},\\
\mathcal{Z}_{Y_0'} &:=\{(t,y_0)\in \Omega \times Y_0' \mid t\in K.y_0\},\\
Z_{X_0} &:= \{(x,x_0)\in X\times X_0 \mid  x\in G.x_0\},\\
Z_{Y_0} &:= \{(y,y_0)\in Y\times Y_0 \mid  y\in H.y_0\}.\\
\end{align*}
Each of these sets is Borel in the product space in question and we now endow them with $\sigma$-finite measures. Note that the projection maps $\pi_l\colon \S\to X_0'$ and $\pi_r\colon \S\to Y_0' $ are both countable-to-one and we can therefore define two measures on $\S$ by integrating the counting measure against the measures on $X_0'$ and $Y_0'$, respectively. In more detail, for a Borel subset $E\subset \S$ we define
\begin{align*}
\gamma_l(E)&:=\int_{X_0'} |\pi_l^{-1}(x_0) \cap E| \mathrm{d}\mu_0'(x_0),\\
\gamma_r(E)&:=\int_{Y_0'} |\pi_r^{-1}(y_0) \cap E| \mathrm{d}\nu_0'(y_0).
\end{align*}
 Similarly, the left leg projection maps from $\Z, \Z_{X_0'}$ and $\Z_{Y_0'}$ onto $\Omega$ are countable-to-one and we obtain $\sigma$-finite measures $\eta_{\Z}, \eta_{\Z_{X_0'}}$ and $\eta_{\Z_{Y_0'}}$ by integrating the counting measure along it against the original measure $\eta$ on $\Omega$.

\begin{claim}\label{pf-formula-1}
The Borel isomorphisms $\alpha\colon K\times X_0' \to \Z_{X_0'}$ and $\beta\colon K\times Y_0'\to \Z_{Y_0'}$ given by $\alpha(k,x_{0}):=(k.x_0,x_0)$ and $\beta(k,y_{0})=(k.y_0,y_0)$ satisfy $\alpha_*(\lambda_K\times \mu_0')= \covol(X_0)\eta_{\Z_{X_0'}}$ and  $\beta_*(\lambda_K\times \nu_0')=\covol(Y_0)\eta_{\Z_{Y_0'}}$.
\end{claim}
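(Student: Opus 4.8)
The two stated identities are symmetric in the roles of $(G,X,i)$ and $(H,Y,j)$, so I would prove the one concerning $\alpha$ and obtain the one concerning $\beta$ by interchanging these roles. The plan is to transport the whole computation from $\Omega$ to $H\times X$ through the strict Borel isomorphism $j$ and then reduce it to Proposition \ref{prop:cross-section}(2) for the action $G\curvearrowright (X,\mu)$. Since the coupling is genuinely free, the $K=G\times H$-action on $\Omega$ is free, and combining $j(h,x)=(e_G,h).j(e_H,x)$ with the defining relation $g.j(e_H,x)=j(\omega_H(g,x)^{-1},g.x)$ gives the formula $(g,h_0).j(h,x)=j(h_0 h\,\omega_H(g,x)^{-1},g.x)$ for the $K$-action read off on $H\times X$; the only feature of the twist $\omega_H(g,x)^{-1}$ that matters below is that, for fixed $(g,x)$, it is a fixed element of $H$.

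First I would identify the relevant sets. Using the displayed formula and freeness of $G\curvearrowright X$, one checks that $K.j(e_H,x_0)=j(H\times G.x_0)$, so that $j(h,x)$ lies in $K.j(e_H,x_0)$ if and only if $x\in G.x_0$. Consequently $j$ induces a Borel isomorphism $\Z_{X_0'}\cong H\times Z_{X_0}$ sending $(j(h,x),j(e_H,x_0))$ to $(h,x,x_0)$. Under this identification I would show that $\eta_{\Z_{X_0'}}$ corresponds to $\lambda_H\times \rho_{X_0}$, where $\rho_{X_0}$ is the measure on $Z_{X_0}$ obtained by integrating the counting measure along $\pi_l$ against $\mu$ (the measure $\rho$ of the paragraph preceding Proposition \ref{prop:cross-section}). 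This is a direct Tonelli computation using $\eta=j_*(\lambda_H\times \mu)$ together with the fact that the fibre of $\pi_l\colon \Z_{X_0'}\to\Omega$ over $j(h,x)$ is carried to $\{h\}\times (G.x\cap X_0)$.

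Next I would compute $\alpha_*(\lambda_K\times \mu_0')$. Since $\mu_0'=j_*(\delta_{e_H}\times \mu_0)$ corresponds to $\mu_0$ on $X_0$ and $\lambda_K=\lambda_G\times\lambda_H$, the map $\alpha$ becomes, under the identification above, $((g,h_0),x_0)\mapsto (h_0\,\omega_H(g,x_0)^{-1},g.x_0,x_0)$. For fixed $(g,x_0)$ the $H$-coordinate is the right translate of $h_0$ by the fixed element $\omega_H(g,x_0)^{-1}$, so integrating $h_0$ against $\lambda_H$ and invoking unimodularity of $H$ removes the cocycle twist entirely. What remains is $\lambda_H$ times the pushforward of $\lambda_G\times\mu_0$ under $(g,x_0)\mapsto (g.x_0,x_0)=\Psi(g,x_0)$, which by Proposition \ref{prop:cross-section}(2) equals $\covol(X_0)\rho_{X_0}$. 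Combining this with the previous paragraph yields $\alpha_*(\lambda_K\times \mu_0')=\covol(X_0)(\lambda_H\times\rho_{X_0})=\covol(X_0)\eta_{\Z_{X_0'}}$, as claimed.

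The one genuine subtlety, and the step I would flag as the crux, is the appearance of the cocycle $\omega_H$ in the $H$-coordinate of $\alpha$: it is precisely unimodularity of $H$ (right-invariance of $\lambda_H$) that allows the Haar integral over $h_0$ to absorb this twist, and symmetrically unimodularity of $G$ is what makes the $\beta$-identity work. Everything else is bookkeeping with $\sigma$-finite measures, for which Tonelli's theorem applies since every measure in sight is $\sigma$-finite; and the constant on the right is forced to be $\covol(X_0)$, rather than some new covolume attached to the $K$-action, exactly because the reduction lands on Proposition \ref{prop:cross-section}(2) for $G\curvearrowright (X,\mu)$.
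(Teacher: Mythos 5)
Your proposal is correct and follows essentially the same route as the paper: the paper's proof is a single chain of integral identities that transports everything through $j$, uses the strict cocycle relation $(g,h).j(e_H,x_0)=j(h\omega_H(g,x_0)^{-1},g.x_0)$, absorbs the twist by right-invariance of $\lambda_H$ (unimodularity), and then invokes Proposition \ref{prop:cross-section}(2) to produce $\covol(X_0)\rho$, exactly the three ingredients you isolate. Your repackaging into the two sub-steps (identifying $\eta_{\Z_{X_0'}}$ with $\lambda_H\times\rho$ under $j$, then computing $\alpha$ in those coordinates) is only an organizational difference, not a different argument.
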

\begin{proof}[Proof of Claim \ref{pf-formula-1}]
For any Borel function $f\colon \Z_{X_0'}\to [0,\infty[$ we get, using the unimodularity of $H$ and the measure $\rho$ associated with $X_0$ via Proposition \ref{prop:cross-section}, that
\begin{align*}
  \int_{\Z_{X_0'}} f  \mathrm{d} \alpha_*(\lambda_K\times \mu_0') &= \int_{G}\int_H\int_{X_0} f((g,h).j(e_H,x_0), j(e_H,x_0))\mathrm{d}\lambda_G(g)\mathrm{d}\lambda_H(h)\mathrm{d}\mu_0(x_0)\\
&= \int_{G}\int_{X_0}\int_{H} f(j(h\omega_H(g,x_0)^{-1},g.x_0), j(e_H,x_0))\mathrm{d}\lambda_H(h)\mathrm{d}\mu_0(x_0) \mathrm{d}\lambda_G(g)\\
&= \int_{G}\int_{X_0}\int_{H} f(j(h,g.x_0), j(e_H,x_0))d\lambda_H(h)\mathrm{d}\mu_0(x_0) \mathrm{d}\lambda_G(g)\\
&= \int_H \int_{G}\int_{X_0} f\circ (j\times j )(h,g.x_0, e_H,x_0) \mathrm{d}\mu_0(x_0)\mathrm{d}\lambda_G(g)\mathrm{d}\lambda_H(h)\\
&= \covol(X_0) \int_H \int_{Z_{X_0}} f\circ (j\times j ) (h,x,e_H, x_0)\mathrm{d}\rho(x,x_0)\mathrm{d}\lambda_H(h)\\
&= \covol(X_0) \int_H \int_X \sum_{\substack{x_0\in X_0\\ x\in G.x_0}} f (j(h,x),j(e_H,x_0))\mathrm{d}\mu(x)\mathrm{d}\lambda_H(h)\\
&=\covol(X_0)\int_{H} \int_{X} \sum_{\substack{x_0\in X_0\\ j(h,x)\in j(H\times G.x_0)}} \hspace{-0.3cm} f(j(h,x_0),j(e_H,x_0))\mathrm{d}\mu(x)\mathrm{d}\lambda_H(h)\\
&=\covol(X_0) \int_\Omega \sum_{\substack{x_0'\in X_0'\\ t\in K.x_0'}} f(t,x_0')  \mathrm{d}\eta(t)\\
&=\covol(X_0)\int_{\Z_{X_0}'} f\mathrm{d}\eta_{\Z_{X_0'}}.
\end{align*}
This proves that $\alpha_*(\lambda_K\times \mu_0')= \covol(X_0)\eta_{\Z_{X_0'}}$ and the formula  $\beta_*(\lambda_K\times \nu_0')=\covol(Y_0)\eta_{\Z_{Y_0'}}$ follows by a similar argument.
\end{proof}

\begin{claim}\label{pf-formula-2}
The Borel isomorphisms $\Phi_1\colon K\times \S \to \Z$ and $\Phi_2\colon K\times \S \to \Z$ given by $\Phi_1(k,x_0,y_0)=(k.x_0,x_0,y_0)$ and $\Phi_2(k,x_0,y_0)=(k.y_0,x_0,y_0)$ satisfy ${\Phi_1}_*(\lambda_K\times \gamma_l)=\covol(X_0)\eta_{\Z}$ and ${\Phi_2}_*(\lambda_K\times \gamma_r)=\covol(Y_0)\eta_{\Z}$.
\end{claim}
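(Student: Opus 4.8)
The plan is to deduce both identities from Claim~\ref{pf-formula-1} by a fibering argument, thereby avoiding a second lengthy computation with the cocycles. The point is that $\Z$ sits over $\Z_{X_0'}$ via the Borel projection $p\colon \Z\to \Z_{X_0'}$, $(t,x_0,y_0)\mapsto (t,x_0)$, whose fibres are countable, while $\S$ sits over $X_0'$ via $\pi_l$. First I would record two factorizations of the relevant measures through these projections. On the one hand, unwinding the definitions of $\eta_\Z$ and $\eta_{\Z_{X_0'}}$ (both obtained by counting along the left leg onto $\Omega$) shows that $\eta_\Z$ is obtained from $\eta_{\Z_{X_0'}}$ by integrating, against $\eta_{\Z_{X_0'}}$, the counting measure on the fibres of $p$; concretely $\eta_\Z(E)=\int_{\Z_{X_0'}} |p^{-1}(c)\cap E|\,\mathrm{d}\eta_{\Z_{X_0'}}(c)$ for $E\subset \Z$, which follows by writing both sides as an integral over $\Omega$ of the double sum over $x_0\in X_0'\cap K.t$ and $y_0\in Y_0'\cap K.t$. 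On the other hand, directly from the definition of $\gamma_l$ one sees that $\lambda_K\times\gamma_l$ is obtained from $\lambda_K\times\mu_0'$ by integrating the counting measure on the fibres of $\id_K\times \pi_l$.

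Next I would verify the compatibility of $\Phi_1$ with these projections, namely $p\circ \Phi_1=\alpha\circ(\id_K\times \pi_l)$, which is immediate from the formulas. The crucial observation is that $\Phi_1$ carries each fibre $(\id_K\times\pi_l)^{-1}(k,x_0)=\{(k,x_0,y_0):y_0\in Y_0'\cap K.x_0\}$ bijectively onto the fibre $p^{-1}(\alpha(k,x_0))=\{(k.x_0,x_0,y_0):y_0\in Y_0'\cap K.x_0\}$; here one uses $K.(k.x_0)=K.x_0$ together with freeness of the action, the latter also guaranteeing that $\Phi_1$ is a Borel isomorphism. Granting this, pushing $\lambda_K\times\gamma_l$ forward and computing fibrewise gives, for $E\subset\Z$, the chain $(\Phi_1)_*(\lambda_K\times\gamma_l)(E)=\int_{K\times X_0'} |p^{-1}(\alpha(b))\cap E|\,\mathrm{d}(\lambda_K\times\mu_0')(b)=\int_{\Z_{X_0'}} |p^{-1}(c)\cap E|\,\mathrm{d}\alpha_*(\lambda_K\times\mu_0')(c)=\covol(X_0)\,\eta_\Z(E)$, where the middle step is the change of variables along the Borel isomorphism $\alpha$ and the last step combines Claim~\ref{pf-formula-1} with the factorization of $\eta_\Z$ recorded above. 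The identity for $\Phi_2$ then follows by the symmetric argument, replacing $p$ by the projection $\Z\to\Z_{Y_0'}$, $\pi_l$ by $\pi_r$, $\gamma_l$ by $\gamma_r$, $\alpha$ by $\beta$, and invoking the second formula of Claim~\ref{pf-formula-1}.

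The main obstacle I anticipate is bookkeeping with the $\sigma$-finite counting measures: one must justify each interchange of summation and integration, which is harmless here since every integrand is non-negative so that Tonelli's theorem applies, and one must check measurability of the fibrewise sums $t\mapsto \sum_{x_0\in X_0'\cap K.t}(\cdots)$ arising when unwinding $\eta_\Z$. Equally, the fibre-bijection step must be checked with care, as it is exactly where freeness of the $K$-action on $\Omega$ and the $K$-invariance of orbits enter; once that is in place, the rest is a routine change of variables. Alternatively, one can avoid the abstract fibering language altogether and simply integrate an arbitrary non-negative Borel $f\colon\Z\to[0,\infty[$ against $(\Phi_1)_*(\lambda_K\times\gamma_l)$, collapse the $y_0$-sum into an auxiliary function on $\Z_{X_0'}$, and apply Claim~\ref{pf-formula-1} to that function, which reproduces the same identity with no additional input.
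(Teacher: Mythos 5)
Your proposal is correct and is essentially the paper's argument: the paper proves the claim by integrating an arbitrary non-negative Borel function $f$ against ${\Phi_1}_*(\lambda_K\times\gamma_l)$, unwinding $\gamma_l$ as a sum over $y_0\in K.x_0\cap Y_0'$, applying Claim~\ref{pf-formula-1} to the resulting function on $\Z_{X_0'}$, and then unwinding $\eta_{\Z_{X_0'}}$ over $\Omega$ to recognize $\eta_\Z$ --- which is exactly the ``alternative'' you describe in your last sentence, and your fibrewise change-of-variables formulation is just a repackaging of the same computation. The only cosmetic quibble is that freeness of the $K$-action is not needed for the fibre bijection itself (only for global injectivity of $\Phi_1$), but this does not affect the argument.
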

\begin{proof}[Proof of Claim \ref{pf-formula-2}]
 For any Borel function $f\colon \Z\to [0,\infty[$ we have 

\begin{align*}
\int_{\Z} f{\mathrm d}{\Phi_1}_*(\lambda_K\times \gamma_l) &= \int_{K}\int_{\S} f(k.x_0,x_0,y_0) \mathrm{d}\gamma_l(x_0,y_0)\mathrm{d}\lambda_K(k)\\
&= \int_K \int_{X_0'} \sum_{y_0\in K.x_0\cap Y_0'}f(k.x_0,x_0,y_0) \mathrm{d}\mu_{0}'(x_0)\mathrm{d}\lambda_K(k)\\
&= \covol(X_0) \int_{\Z_{X_0'}} \sum_{y_0\in K.x_0\cap Y_0'}f(t,x_0,y_0) \mathrm{d}\eta_{\Z_{X_0'}}(t,x_0) \tag{by Claim \ref{pf-formula-1} }\\
&= \covol(X_0) \int_{\Omega} \sum_{x_0\in K.t\cap X_0'} \sum_{y_0\in K.x_0\cap Y_0'}f(t,x_0,y_0) \mathrm{d}\eta(t)\\
&= \covol(X_0) \int_{\Z}f \mathrm{d}\eta_{\Z}
\end{align*}
This proves the first formula claimed and the other one follows similarly.
\end{proof}
Since $K\curvearrowright \Omega$ is assumed free, for every $(x_0,y_0)\in \S$ there exists a unique $\Lambda(x_0,y_0)\in K$ such that $\Lambda(x_0,y_0)y_0=x_0$ and the map $\Lambda\colon \S\to K$ defined this way is Borel:  Indeed $X_0' \times Y_0'$ and $\mathcal R_{G \times H}$ are Borel subsets of $\Omega \times \Omega$.  So if $I$ denotes the inverse image of $X_0' \times Y_0' \cap \mathcal R_{G \times H}$ under the action map $\delta\colon K \times \Omega \to \Omega \times \Omega$, then Lemma \ref{lem:injective-borel} shows that $\delta|_I$ is a Borel isomorphism onto its image.  Thus $\Lambda = p_K \circ (\delta_I)^{-1}$ is a Borel map, where $p_K\colon K \times \Omega \to K$  denotes the first coordinate projection.  Consider now the Borel map $\zeta\colon K\times \S \to K\times \S$ given by $\zeta(k,x_0,y_0)=(k\Lambda(x_0,y_0), x_0, y_0)$ and note that since $K$ is unimodular $\zeta_*(\lambda_K\times \gamma_l)=\lambda_K\times \gamma_l$. Moreover, a direct computation shows that $\Phi_1=\Phi_2\circ \zeta$ and hence we conclude that
\begin{align*}
\frac{1}{\covol(Y_0)}{\Phi_2}_*(\lambda_K\times \gamma_r) &= \frac{1}{\covol(X_0)}{\Phi_1}_*(\lambda_K\times \gamma_l) \tag{Claim \ref{pf-formula-2}}\\
&= \frac{1}{\covol(X_0)}(\Phi_2\circ \zeta)_*(\lambda_K\times \gamma_l)\\
&= \frac{1}{\covol(X_0)}{\Phi_2}_*(\lambda_K\times \gamma_l),
\end{align*}
and since $\Phi_2$ is a Borel isomorphism this implies that $\covol(X_0)(\lambda_K\times \gamma_r)=\covol(Y_0)(\lambda_K\times \gamma_l)$ which, in turn, yields 
\begin{align}\label{left-right-formula}
\covol(X_0)\gamma_r=\covol(Y_0)\gamma_l.
\end{align}
With the formula \eqref{left-right-formula} at our disposal we can now finish the proof. Consider any $K$-invariant, $\eta$-non-negligible measurable subset $\Omega_1\subset \Omega$ and  define 
and $\S_1:=\S\cap \Omega_1\times \Omega_1$.  By the Lusin-Novikov Theorem \ref{thm:lusin-novikov} we can find Borel partitions $\S_1 = \bigcup_{n \in \NN} E_n$ and $\S_1 = \bigcup_{n \in \NN} F_n$ such that $\pi_l|_{E_n}$ and $\pi_r|_{F_n}$ are injective for all $n \in \NN$.  At least one of the intersections $E_n \cap F_m$ for $n,m \in \NN$ is non-negligible, so that we can pick a Borel subset $\S_2\subset \S_1$ with $\gamma_l(\S_2)>0$ and on which both $\pi_l$ and $\pi_r$ are injective.  We put $X_2':=\pi_l(\S_2)$ and $Y_2':=\pi_r(\S_2)$ and observe that $\pi_l|_{\S_2}: \S_2 \to X_2'$ and $\pi_r|_{\S_2}: \S_2 \to Y_2'$ are Borel isomorphisms by Lemma \ref{lem:injective-borel}.  Note also that
\[
0<\gamma_l(\S_2)=\int_{X_0'} |\pi_l^{-1}(x_0)\cap \S_2|d\mu_0'(x_0)=\mu_0'(X_2').
\]
 We may then define a Borel isomorphism $\psi:=\pi_r\circ (\pi_l\restriction_{\S_2})^{-1}\colon X_2'\to Y_2'$ and the formula \eqref{left-right-formula} now implies that
\[
\psi_*\left(\mu_0'\restriction_{X_2'}\right)=\frac{\covol(X_0)}{\covol(Y_0)}\nu_0'\restriction_{Y_2'},
\]
and, by definition, $\psi$ preserves the restrictions of the orbit equivalence relation of $K\curvearrowright \Omega$ to $X_2'$ and $Y_2'$, respectively.  Moreover, we have $K.X_2'=K.Y_2'$ 
and since $\mu_0'(X_2')>0$ it follows from Claim \ref{pf-formula-1} that this set is non-negligible in $\Omega$.  The proof is now concluded by applying a maximality argument:  Denote by $\mathbb{A}$ the set of countable families $(\S_i)$ of pairwise disjoint Borel subsets $\S_i \subset \S$ such that $\pi_l$ and $\pi_r$ are injective on $\bigcup_i \S_i$ and such that $K.\pi_l(\S_i)=K.\pi_r(\S_i)$ are pairwise disjoint and non-negligible subsets of $\Omega$.  Then $\mathbb{A}$ is inductively ordered by inclusion of families.  So we can pick a maximal element $(\S_{\max, i})$ and put $\S_\max = \bigcup_i \S_{\max, i}$.  Maximality implies that $X_\max':=\pi_l(\S_\max)$ and $Y_\max':=\pi_r(\S_\max)$ satisfy that $K.X_\max'=K.Y_\max'$ is conull in $\Omega$.
Moreover, by what was shown above the associated map $\psi_{\max}:=\pi_r \circ (\pi_l\restriction_{\S_\max})\colon X_\max' \to Y_{\max}'$ scales the restricted measures and maps orbits to orbits; thus $\psi_\max$ is an orbit equivalence if we can show that the saturations of $X_\max'$ and $Y_{\max}'$ are conull. 
To this end, note that the saturation of $X_\max'$ equals $K.X_\max'\cap X_0'$ and hence its complement in $X_0'$ is $X_0'\cap (\Omega\setminus K.X_\max)$. By construction $N:= \Omega\setminus K.X_\max$ is a $K$-invariant $\eta$-null set and applying Claim \ref{pf-formula-1} again we get
\begin{align*} 
\tfrac{1}{\covol(X_0)}(\lambda_K \times \mu_0')(K\times (X_0'\cap N)) &= \eta_{\Z_{X_0'}}\left( \{ (k.x_0,x_0)\mid k\in K, x_0\in X_0'\cap N     \} \right)\\
&=\int_{t\in \Omega} \left| \pi_l^{-1}(t)\cap  \{ (k.x_0,x_0)\mid k\in K, x_0\in X_0'\cap N     \}     \right| \mathrm{d} \eta(t)\\
&=\int_{t\in N} \left| \pi_l^{-1}(t)\cap  \{ (k.x_0,x_0)\mid k\in K, x_0\in X_0'\cap N     \}     \right| \mathrm{d}\eta(t)\\
&=0.
\end{align*}
Thus $\mu_0'(X_0'\cap N)=0$ and the proof is complete.
\end{proof}

The proof of the following lemma is inspired by the one of Theorem 2.3 in \cite{gaboriau}.

\begin{lem}
  \label{oe-implies-me}  
  If $G$ and $H$ are unimodular lcsc groups admitting orbit equivalent, essentially free, ergodic, pmp actions on standard Borel probability spaces then $G$ and $H$ are measure equivalent.  
\end{lem}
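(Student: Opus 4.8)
The plan is to build an explicit measure equivalence coupling out of the orbit equivalence, modelled on the classical construction for discrete groups. First I would reduce to the case of genuinely free actions: by \cite[Lemma 10]{stefaan-sven-niels} the sets of points with trivial stabiliser are conull, invariant and Borel, so we may restrict to them. Let $\Delta\colon X\to Y$ be the given orbit equivalence with $\Delta_*\mu=\nu$, and let $c\colon G\times X\to H$ be the associated cocycle constructed above, which we may take to be strict by \cite[Theorem B.9]{zimmer-book}; thus $\Delta(g.x)=c(g,x).\Delta(x)$ holds everywhere. Set $\Omega:=X\times H$ and $\eta:=\mu\times\lambda_H$, and define two actions by
$$ g\triangleright(x,k):=(g.x,\,c(g,x)k),\qquad h\triangleright(x,k):=(x,\,kh^{-1}). $$
The cocycle identity makes the first a genuine $G$-action, the two actions commute, and both preserve $\eta$: for the $G$-action because $G\curvearrowright(X,\mu)$ is pmp and $k\mapsto c(g,x)k$ is a left translation, and for the $H$-action because $k\mapsto kh^{-1}$ is a right translation, which preserves $\lambda_H$ since $H$ is unimodular. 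Hence $(\Omega,\eta)$ is a standard Borel measure $G\times H$-space.

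Next I would exhibit the two fundamental domains required by Definition \ref{def:me}. On the $H$-side the map $j\colon H\times X\to\Omega$, $j(h,x):=(x,h^{-1})$, is a Borel isomorphism, is $H$-equivariant for the action $h_0.(h,x)=(h_0h,x)$, and satisfies $j_*(\lambda_H\times\mu)=\eta$, again using unimodularity of $H$ so that $h\mapsto h^{-1}$ preserves $\lambda_H$; this gives item (iii). On the $G$-side, a direct computation shows that $q\colon\Omega\to Y$, $q(x,k):=k^{-1}.\Delta(x)$, is $G$-invariant, $H$-equivariant, and separates $G$-orbits; moreover $s(y):=(\Delta^{-1}(y),e)$ is a Borel section of $q$. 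I would then set $i\colon G\times Y\to\Omega$, $i(g,y):=g\triangleright s(y)=(g.\Delta^{-1}(y),\,c(g,\Delta^{-1}(y)))$, which is manifestly $G$-equivariant, and check that it is a Borel isomorphism onto a conull set: given $(x,k)$, freeness together with the orbit equivalence relation recovers a unique $g$ with $g.\Delta^{-1}(q(x,k))=x$, and the cocycle relation then forces $c(g,\Delta^{-1}(q(x,k)))=k$, so $i$ is invertible with Borel inverse.

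The remaining, and main, point is the measure identity $i_*(\lambda_G\times\nu)=\eta$; equivalently $\Psi_*(\lambda_G\times\mu)=\mu\times\lambda_H$ for $\Psi(g,x):=(g.x,c(g,x))$, since $i=\Psi\circ(\id\times\Delta^{-1})$ and $\Delta^{-1}_*\nu=\mu$. In the discrete case this is a clean Fubini computation: for fixed $k\in H$ and $x$, freeness makes $\{g:c(g,x)=k\}$ a singleton meeting $\{g.x\in B\}$ exactly when $\Delta^{-1}(k.\Delta(x))\in B$, and summing over $g$ turns the left-hand side into $\nu(k^{-1}.\Delta(B))$, which equals $\nu(\Delta(B))=\mu(B)$ by $H$-invariance of $\nu$. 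For general unimodular groups I would reach the same conclusion in two steps. First, Remark \ref{rem:oe-similarity} gives the measure-class equality $i_*(\lambda_G\times\nu)\sim\eta$. Second, pulling $\eta$ back through $i$ yields a $G$-invariant $\sigma$-finite measure on $G\times Y$ equivalent to $\lambda_G\times\nu$, hence to a probability measure projecting to $\nu$, so Lemma \ref{lem:invariance}(ii) applies and produces a Borel density $b\colon Y\to\;]0,\infty[$ with $i^{-1}_*\eta=\lambda_G\times b\nu$; the commuting $H$-action, transported to $G\times Y$ and preserving this measure, forces $b$ to be $H$-invariant, and the mass computation above, carried out with a cross-section disintegration of $\lambda_G$ along the $G$-orbits, shows $b\equiv 1$.

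I expect this last step---upgrading the measure-class statement of Remark \ref{rem:oe-similarity} to the exact equality, that is, controlling the density $b$---to be the principal obstacle, precisely because it is where the unimodularity of both groups and the $H$-invariance of $\nu$ must be used in tandem. Once $i_*(\lambda_G\times\nu)=\eta$ is established, $(\Omega,\eta,X,\mu,Y,\nu,i,j)$ is a measure equivalence coupling between $G$ and $H$, and the lemma follows.
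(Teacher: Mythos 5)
Your skeleton is sound and close to the paper's: the coupling space $X\times H$ with the two commuting actions, the $H$-side trivialization $j$, the $G$-side map $i(g,y)=(g.\Delta^{-1}(y),\,c(g,\Delta^{-1}(y)))$, and the use of Remark \ref{rem:oe-similarity} together with Lemma \ref{lem:invariance}(ii) to produce an $H$-invariant density $b$ with $i^{-1}_*\eta=\lambda_G\times b\nu$ are all exactly the ingredients of the paper's argument. The gap is your final claim that $b\equiv 1$, i.e.\ that $i_*(\lambda_G\times\nu)=\eta$ on the nose: this is false for non-discrete groups. Take $G=H=\RR$, let $\RR$ act freely and pmp on $(X_1,\mu_1)$ and $(X_2,\mu_2)$, set $X=Y=X_1\sqcup X_2$ with $\mu=\nu=\tfrac12(\mu_1+\mu_2)$, let $G$ act as given and let $H$ act by $s\cdot_H x=(s/2)\cdot_G x$ on $X_1$ and $s\cdot_H x=s\cdot_G x$ on $X_2$. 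Then $\Delta=\operatorname{id}$ is an orbit equivalence of essentially free pmp actions, $c(t,x)=2t$ on $X_1$, and the substitution $s=2t$ gives $i^{-1}_*\eta=\lambda_G\times b\nu$ with $b=2$ on $X_1$ and $b=1$ on $X_2$. The density $b$ records the relative speed with which $G$ and $H$ parametrize the common orbits; it is $H$-invariant but in general neither constant nor even bounded (a skew product $s\cdot_H(x,r)=((s\,\theta(r))\cdot_G x,\,r)$ over $r\in[0,1]$ with $\theta$ unbounded makes $b\nu$ an infinite measure). The discrete-group computation does not transfer because the fibre $\{g : c(g,x)=k\}$ is a single point and hence $\lambda_G$-null, so no Fubini argument over it can pin down the density; under ergodicity $b$ would at least be constant and absorbable into a rescaling, but the lemma does not assume ergodicity.

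The conclusion survives, but the endgame must be changed: use only the $H$-invariance of $b$, truncate to $b_0=\min\{1,b\}$ so that $\nu_0:=b_0\nu$ is a finite $H$-invariant measure, transport $\lambda_H\times\nu_0$ back to the $G$-side and apply Lemma \ref{lem:invariance} once more to write it as $\lambda_G\times\mu_0$ with $\mu_0\leq\mu$ finite; the coupling then carries the adjusted finite measures $\mu_0$ and $\nu_0$ rather than $\mu$ and $\nu$, which Definition \ref{def:me} permits. This is precisely what the paper does. A second, smaller gap: your formulas are only equivariant almost everywhere for each fixed group element, since $\Delta(g.x)=c(g,x).\Delta(x)$ holds only for a.e.\ $x$ for each $g$, so your $\Omega$ is a priori only a near $G\times H$-space and $i$ only an a.e.\ section; promoting this to a genuine standard Borel measure $G\times H$-space with pointwise equivariant Borel trivializations is not automatic and is the reason the paper routes the argument through Boolean algebras and Mackey's point-realization theorem.
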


\begin{proof}
 Let $G\curvearrowright (X,\mu)$ and $H\curvearrowright (Y,\nu)$ be orbit equivalent, essentially free, ergodic, pmp actions. By \cite[Lemma 10]{stefaan-sven-niels} we may assume that both actions are genuinely free.    Choose an orbit equivalence $\Delta\colon X\to Y$ and denote by $X_0\subset X$ and $Y_0\subset Y$ the conull Borel subsets between which the restriction $\Delta_0:=\Delta\restriction_{X_0}\colon X_0 \to Y_0$ is a Borel isomorphism mapping orbits to orbits.  Denote by $\mathcal{R}_G = \{(gx, x) \in X \times X \mid g \in G , x \in X\}$ the orbit equivalence relation associated with the action $G \curvearrowright X$.  The Borel map $\alpha\colon G \times X \to X \times X$, $\alpha(g,x):=(gx,x)$, is injective, and hence a Borel isomorphism onto its image $\mathcal{R}_G$, by Lemma \ref{lem:injective-borel}.  Similarly, we denote the orbit equivalence relation of $H \curvearrowright Y$ by $\mathcal{R}_H$ and obtain a Borel isomorphism $\beta\colon H \times Y \to \mathcal{R}_H$.  Yet another application of Lemma \ref{lem:injective-borel} shows that the map $\Delta_0 \times \Delta_0$ restricts to a Borel isomorphism $\varphi\colon \mathcal{R}_G|_{X_0} \to \mathcal{R}_H|_{Y_0}$.  \\

 Consider the measure $\lambda_G \times \mu$ on $G \times X$ and denote its push-forward to $H \times Y$ by $\sigma = (\beta^{-1} \circ \varphi \circ \alpha)_*(\lambda_G \times \mu)$.  We will show that there is a Haar measure $\lambda_H$ of $H$ such that $\sigma = \lambda_H \times \nu$.  First observe that the equality $\pi_{Y_0} \circ \beta^{-1} \circ \varphi \circ \alpha = \Delta_0 \circ \pi_{X_0}$ on $\alpha^{-1}(\mathcal{R}_G|_{X_0})$ implies that $(\pi_Y)_*(\sigma) \sim \nu$.  Let $d\colon H \times Y \to G$ be the cocycle associated with $\Delta$ by Lemma~\ref{lem:cocycles} and denote by $\tilde d\colon H \times X \to G$ a strict cocycle that agrees with $d \circ (\id \times \Delta)$ almost everywhere, which exists by \cite[Theorem B.9]{zimmer-book}.  Define an $H$-action on $G \times X$ by $h * (g, x) = (\tilde d(h, gx) g, x)$.  For almost all $h \in H$ and almost all $(g,x) \in G \times X$ the equality
 \begin{gather*}
   (\beta^{-1} \circ \varphi \circ \alpha) (h * (g,x)) = h \bigl ( (\beta^{-1} \circ \varphi \circ \alpha)(g,x) \bigr )
 \end{gather*}
 is well-defined and holds.  For a positive measurable function $f: G \times X \to \mathbb{R}_{\geq 0}$, we have
 \begin{align*}
   \int_{G \times X} f(h*(g,x)) \mathrm{d}(\lambda_G \times \mu)(g,x)
   & =
     \int_{G \times X} f(\tilde d(h, gx)g,x) \mathrm{d}(\lambda_G \times \mu)(g,x) \\
   & =
     \int_{G \times X} f(\tilde d(h, x)g,g^{-1}x) \mathrm{d}(\lambda_G \times \mu)(g,x) \\
   & =
     \int_{G \times X} f(g,g^{-1}x) \mathrm{d}(\lambda_G \times \mu)(g,x) \\
   & =
     \int_{G \times X} f(g,x) \mathrm{d}(\lambda_G \times \mu)(g,x)
     \text{.}
 \end{align*}
Denoting by $\sigma = \int_Y \sigma^y \mathrm{d}\nu(y)$ the decomposition of $\sigma$ with respect to $\pi_Y$, the Fubini theorem combined with the previous calculation implies that for almost all $y \in Y$ the subgroup ${\{h \in H \mid h\sigma^y = \sigma^y\}}$ is conull in $H$.  It is hence equal to $H$ by \cite[Proposition B.1]{zimmer-book}.  So for almost all $y \in Y$ the measure $\sigma^y$ is $H$-invariant.  By uniqueness of the Haar measure, this implies that there is a Haar measure $\lambda_H$ on $H$ and a measurable function $b\colon Y \to \mathbb{R}_{> 0}$ such that $\sigma = \lambda_H \times b \nu$.  Making use of the action $(g,x) \mapsto (g, d(h, \Delta(x))x)$, a short calculation similar to the above-one shows that $b$ must be $H$-invariant.  By ergodicity it is hence almost surely constant, so that after rescaling the Haar measure on $H$, we obtain $\sigma = \lambda_H \times \nu$ as desired. \\

Let us now consider $\mathcal{R}  = (\id_{X_0} \times \Delta_0)(\mathcal{R}_G|_{X_0}) = (\Delta_0^{-1} \times \id_{Y_0})(\mathcal{R}_H|_{Y_0}) \subset X_0 \times Y_0$.  On $\mathcal{R}$ we define a near action of $G \times H$ by the formulas
 \begin{gather*}
     g (x,y) =
   \begin{cases}
     (gx,y) & \text{if } gx \in X_0,  \\
     (x,y) & \text{otherwise}
   \end{cases}
 \end{gather*}
 and 
 \begin{gather*}
     h (x,y) =
   \begin{cases}
     (x,hy) & \text{if } hy \in Y_0,  \\
     (x,y) & \text{otherwise.}
   \end{cases}
 \end{gather*}
 Note that both formulas define Borel maps since $\{(g,x) \in G \times X_0 \mid g x \in X_0\}$ is a Borel subset of $G \times X_0$ as it is the image of $\mathcal{R}_G|_{X_0}$ and similarly $\{(h,y ) \in H \times Y_0 \mid hy \in Y_0\}$ is a Borel subset of $H \times Y_0$.  By Lemma \ref{lem:near-action-to-Borel-action}, there is a standard Borel $G \times H$-space $\Omega$ and a measure space isomorphism $\psi\colon \Omega \to \mathcal{R}$ such that for all $(g,h) \in G \times H$ the equality $(g,h)\psi(\omega) = \psi((g,h)\omega)$ holds for almost all $\omega \in \Omega$.  We equip $\Omega$ with the measure 
 \[
 \eta := (\psi^{-1} \circ  (\id_{X_0}\times \Delta_0)\circ \alpha)_*(\lambda_G \times \mu) = (\psi^{-1} \circ (\Delta_0^{-1}\times \id_{Y_0})\circ\beta)(\lambda_H \times \nu).
\]
 The composition of the measure space isomorphisms $i = \alpha^{-1} \circ (\id_{X_0}\times \Delta_0^{-1})\circ \psi \colon \Omega \to G \times X$ induces a $G$-equivariant isomorphism on measure algebras $B(\Omega, \eta) \to B(G \times X, \lambda_G \times \mu)$, which by \cite[Theorem 2]{mackey-point-realization} implies that it is an isomorphism of measure $G$-spaces $\Omega \to G \times X$.  Similarly, the composition $j = \beta^{-1} \circ \psi \colon \Omega \to H \times Y_0$ is an isomorphism of measure $H$-spaces.  This shows that $(\Omega, \eta, X, \mu, Y, \nu, i, j)$ is a measure equivalence coupling between $(G, \lambda_G)$ and $(H, \lambda_H)$, which finishes the proof. 
\end{proof}

Finally, we gather the results above into a proof of Theorem \ref{thm:me-and-soe-equivalence}.
\begin{proof}[Proof of Theorem \ref{thm:me-and-soe-equivalence}]
Assuming $G$ and $H$ measure equivalent, we may, by Proposition \ref{prop:free-ergodic-coupling},  choose a strict, free, ergodic measure coupling $(\Omega, \eta, X,\mu, Y,\nu, i,j)$. The induced actions $G\curvearrowright (X,\mu)$ and $H\curvearrowright (Y,\nu)$ are then also  free and ergodic  and by Lemma \ref{lem:ME-implies-SOE-cross} it follows that the cross section equivalence relations associated with any choice of cross sections $X_0\subset X$ and $Y_0\subset Y$ are stably orbit equivalent; thus (i) implies (iii).
 Assuming (iii), we have essentially free, ergodic, pmp actions $G\curvearrowright (X,\mu)$ and $H\curvearrowright (Y,\nu)$ with cross sections $X_0$ and $Y_0$ for which the associated cross section equivalence relations, $\R_{X_0}$ and $\R_{Y_0}$, are stably orbit equivalent. 
 Since $\mathrm S^1\curvearrowright (\mathrm{S^1},\lambda_{\mathrm{S^1}})$ is free and ergodic so is the diagonal action $G\times \mathrm{S^1}\curvearrowright  X\times \mathrm{S^1}$ (freeness is clear and any invariant subset is of the form $A\times \mathrm{S^1}$ for a $G$-invariant subset $A$ in  $X$) and $X_0\times \{1\}$ is a cross section for this action and the associated cross section equivalence relation is orbit equivalent with $\R_{X_0}$. So, we obtain essentially free, ergodic, pmp actions of $G\times \mathrm{S^1}$ and $H\times \mathrm{S^1}$ with stably orbit equivalent cross section equivalence relation and by Lemma \ref{lem:SOE-cross-eq-implies-OE-actions} we conclude that the original actions are orbit equivalent; this shows that (iii) implies (ii). Lastly, assuming (ii) we get  that $G \times \mathrm{S^1}$ is measure equivalent with $H\times \mathrm{S^1}$ from Lemma  \ref{oe-implies-me}, and hence (i) holds because any unimodular lcsc group $G$ is a cocompact subgroup of $G\times \mathrm S^1$ and in particular measure equivalent to it.
 
Finally we need to address the two special cases: If $G$ and $H$ are discrete, the equivalence of (i) and (ii)' is well known \cite{Furman-OE-rigidity} and the equivalence of (ii)' and (iii) is obvious as the whole space is a cross section for an essentially free action of a countable, discrete group. If both $G$ and $H$ are non-discrete, the proof given above goes through without passing to an amplification with $\mathrm{S^1}$ before applying Lemma \ref{lem:SOE-cross-eq-implies-OE-actions}, thus showing the equivalence between  (i), (ii)''  and (iii).
\end{proof}

\section{Measure equivalence of amenable lcsc groups}
\label{sec:me-amenable-groups}

In this section we prove Theorem \ref{thm:amenable-grps-are-me-intro} and show,  conversely,  that if a unimodular lcsc  group is measure equivalent to a unimodular amenable group, then it is itself amenable. 

\begin{thm}
  \label{thm:amenable-groups-are-ME}
  All non-compact, amenable, unimodular, lcsc groups are pairwise measure equivalent.
\end{thm}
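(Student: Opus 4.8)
The strategy is to combine Theorem \ref{thm:me-and-soe-equivalence} with the Connes--Feldman--Weiss uniqueness theorem for amenable equivalence relations. The plan is to produce, for each of the two groups, an essentially free, ergodic, pmp action whose cross section equivalence relation is the (unique) amenable ergodic type $\mathrm{II}_1$ relation, and then to invoke the implication (iii)$\Rightarrow$(i) of that theorem.

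First I would fix an essentially free, ergodic, pmp action of each group on a standard Borel probability space. An essentially free pmp action exists by \cite[Proposition 1.2]{adams-elliot-giordano}, and since the conull set of points with trivial stabiliser is invariant, passing to an ergodic component via the Ergodic Decomposition Theorem \cite[Theorem 7.8]{grigorchuk-de-la-harpe} preserves essential freeness; this yields essentially free, ergodic, pmp actions $G \curvearrowright (X,\mu)$ and $H \curvearrowright (Y,\nu)$.

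Next, fix cross sections $X_0 \subset X$ and $Y_0 \subset Y$ with invariant probability measures $\mu_0$ and $\nu_0$ as in Proposition \ref{prop:cross-section}, and consider the cross section equivalence relations $\R_{X_0}$ and $\R_{Y_0}$. By Proposition \ref{prop:cross-section} these are countable, pmp Borel equivalence relations which are ergodic (part (3)) and amenable (part (4)). The crucial remaining point -- and the only place where non-compactness is used -- is that they are \emph{aperiodic}, i.e.~almost every class is infinite. To see this for $\R_{X_0}$, recall from Proposition \ref{prop:cross-section}(2) that the Borel isomorphism $\Psi\colon G \times X_0 \to Z$, $\Psi(g,x_0) = (g.x_0, x_0)$, satisfies $\Psi_*(\lambda_G \times \mu_0) = \covol(X_0)\rho$, where $\rho(E) = \int_X |\pi_l^{-1}(x) \cap E|\,\mathrm d\mu(x)$. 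Comparing total masses gives
\[
\infty = \lambda_G(G) = \covol(X_0)\int_X |G.x \cap X_0|\,\mathrm d\mu(x),
\]
the left-hand side being infinite precisely because $G$ is non-compact. Since $\covol(X_0)\in ]0,\infty[$, the integral is infinite, and as $x \mapsto |G.x \cap X_0|$ is $G$-invariant, ergodicity forces $|G.x \cap X_0| = \infty$ for almost every $x$. Thus $\R_{X_0}$ has infinite classes almost everywhere; in particular $\mu_0$ is non-atomic, since an atom would generate a countable orbit of equal-mass atoms of infinite total mass.

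Finally, $\R_{X_0}$ and $\R_{Y_0}$ are both ergodic, amenable, type $\mathrm{II}_1$, countable pmp equivalence relations, so by the Connes--Feldman--Weiss theorem \cite{Connes-Feldman-Weiss} each is isomorphic to the unique hyperfinite ergodic $\mathrm{II}_1$ equivalence relation. In particular they are orbit equivalent, hence stably orbit equivalent in the sense of Definition \ref{def:OE-for-eq-rel} (take $A = X_0$, $B = Y_0$, whose saturations are the whole spaces and whose normalized measures agree under the resulting isomorphism), and the implication (iii)$\Rightarrow$(i) of Theorem \ref{thm:me-and-soe-equivalence} then shows that $G$ and $H$ are measure equivalent. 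The main obstacle I expect is the aperiodicity step: it both pins down the role of the non-compactness hypothesis and places the cross section relations in the exact regime (ergodic, amenable, $\mathrm{II}_1$) in which the Connes--Feldman--Weiss classification applies.
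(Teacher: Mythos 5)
Your proposal is correct and follows essentially the same route as the paper: produce essentially free, ergodic, pmp actions, pass to cross section equivalence relations, identify them via Connes--Feldman--Weiss, and conclude with Theorem \ref{thm:me-and-soe-equivalence}. The only differences are cosmetic: the paper first splits off the discrete case via Ornstein--Weiss and the measure equivalence of $\ZZ$ and $\RR$, whereas your argument is uniform, and your explicit verification that the cross section relations are aperiodic (hence of type $\mathrm{II}_1$) makes precise a point the paper leaves implicit.
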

\begin{proof}
By the Ornstein-Weiss theorem \cite[Theorem 6]{ornstein-weiss} (see also \cite{Furman-OE-rigidity, Dye-mp-trans-I, Dye-mp-trans-II}) all infinite, discrete, countable, amenable groups are measure equivalent, and since $\ZZ$ is measure equivalent to $\RR$ we only need to prove that any pair of non-discrete, non-compact amenable, lcsc unimodular groups $G$ and $H$ are measure equivalent.  Let therefore $G$ and $H$ be two such groups  and pick essentially free, ergodic pmp actions $G\curvearrowright (X,\mu)$, $H\curvearrowright (Y,\nu)$; cf.~\cite[Proposition 1.2]{adams-elliot-giordano} and \cite[Remark 1.1]{KPV} for the existence of such actions.  Choose cross sections $X_0\subset X$ and  $Y_0\subset Y$ for the two actions and recall that the induced cross section equivalence relations are ergodic and amenable \cite[Proposition 4.3]{KPV}, and hence  orbit equivalent by \cite{Connes-Feldman-Weiss} and \cite{Dye-mp-trans-I}.
Hence $G$ and $H$ are measure equivalent by Theorem \ref{thm:me-and-soe-equivalence}.
\end{proof}

The techniques used in the proof above also provides an explicit proof of the following well-known consequence of \cite{Connes-Feldman-Weiss}  (cf.~\cite[Theorem B]{bowen-hoff-ioana} for a converse statement): all essentially free, ergodic, probability measure preserving  actions of non-compact, non-discrete, amenable, unimodular, locally compact, second countable groups on standard Borel probability spaces are pairwise orbit equivalent. Namely, given two such groups $G$ and $H$ with essentially free, ergodic, pmp action on standard Borel probability spaces $(X,\mu)$ and $(Y,\nu)$, respectively, then, as in the proof just given we conclude that the cross section equivalence relations associated with any choice of cross sections for the two actions are orbit equivalent, and hence the original actions are orbit equivalent by Lemma \ref{lem:SOE-cross-eq-implies-OE-actions}.


\begin{thm}  \label{thm:amenability-ME-invariant}
If $G$ and $H$ are measure equivalent,  unimodular,  lcsc groups and one of them is amenable then so is the other.
\end{thm}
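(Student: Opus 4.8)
The plan is to transport amenability back and forth through the dictionary between groups and cross section equivalence relations established earlier in the paper. Since the statement is symmetric in $G$ and $H$, I would assume that $G$ is amenable and prove that $H$ is. Applying Theorem \ref{thm:me-and-soe-equivalence} to the measure equivalent groups $G$ and $H$, I first choose essentially free, ergodic, pmp actions $G \curvearrowright (X,\mu)$ and $H \curvearrowright (Y,\nu)$ on standard Borel probability spaces, together with cross sections $X_0 \subset X$ and $Y_0 \subset Y$ whose associated cross section equivalence relations $\R_{X_0}$ and $\R_{Y_0}$ are stably orbit equivalent in the sense of Definition \ref{def:OE-for-eq-rel}, with respect to the canonical invariant probability measures $\mu_0$ and $\nu_0$ from Proposition \ref{prop:cross-section}(2).

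The argument then runs in three moves. First I would use Proposition \ref{prop:cross-section}(4) to translate amenability of $G$ into amenability of its cross section equivalence relation: since $G$ is amenable, $(\R_{X_0},\mu_0)$ is amenable. The decisive step is to carry amenability across the stable orbit equivalence to $(\R_{Y_0},\nu_0)$. Finally I would invoke Proposition \ref{prop:cross-section}(4) once more, this time for the group $H$, to conclude that amenability of $(\R_{Y_0},\nu_0)$ forces $H$ to be amenable. Thus everything reduces to the middle move, and the roles of the two outer moves are purely to translate between the group-theoretic and measured-relation-theoretic sides.

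The main obstacle is therefore the claim that amenability of a countable, measure preserving Borel equivalence relation is invariant under stable orbit equivalence. I would establish this by reducing it to two elementary stability properties. By the Connes--Feldman--Weiss theorem \cite{Connes-Feldman-Weiss}, amenability coincides with hyperfiniteness for countable pmp equivalence relations, and hyperfiniteness is visibly preserved under the two operations from which a stable orbit equivalence is built: (a) restriction of a hyperfinite relation to a non-negligible Borel subset is again hyperfinite, since an exhausting increasing sequence of finite subrelations restricts to one on the subset; and (b) hyperfiniteness is transported by any Borel isomorphism that intertwines the two relations and scales the normalized measures. The one point requiring genuine care is the converse to (a): if the restriction $\R_{Y_0}\restriction_B$ is hyperfinite and the $\R_{Y_0}$-saturation of $B$ is conull, then $\R_{Y_0}$ itself is hyperfinite; this is the standard fact that, up to a null set, the full relation is generated by its restriction to a subset meeting almost every class together with a countable choice of orbit representatives. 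Writing the stable orbit equivalence as a Borel isomorphism $\Delta\colon A \to B$ between non-negligible sets with conull saturations and matching normalized measures, amenability of $\R_{X_0}$ gives amenability of $\R_{X_0}\restriction_A$ by (a), hence of $\R_{Y_0}\restriction_B$ by (b) via $\Delta$, and finally of $\R_{Y_0}$ by the converse to (a), since the $\R_{Y_0}$-saturation of $B$ is conull. This closes the middle move and completes the proof.
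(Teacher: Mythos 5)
Your proposal is correct, and its skeleton coincides with the paper's: pass to essentially free, ergodic, pmp actions whose cross section equivalence relations are stably orbit equivalent, translate group amenability into amenability of the cross section relation via Proposition \ref{prop:cross-section}(4), transfer across the (stable) orbit equivalence, and translate back. The two arguments diverge only in how the transfer step is executed. You prove outright that amenability of a countable pmp equivalence relation is invariant under stable orbit equivalence, by invoking Connes--Feldman--Weiss to replace amenability with hyperfiniteness and then using the three standard stability facts (restriction to a non-negligible Borel set, transport along a measure-scaling Borel isomorphism, and the converse passage from a complete section with conull saturation back to the full relation). All of these are correct, and the last one -- the only delicate point, since the naive pullback of finite subrelations along a retraction onto $B$ can have infinite classes -- is indeed the standard fact you cite; one exhausts the countable fibres of the retraction to restore finiteness. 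The paper instead sidesteps this entire discussion: using ergodicity of the ambient actions it observes that the sets $A\subset X_0$ and $B\subset Y_0$ implementing the stable orbit equivalence are themselves cross sections of the original $G$- and $H$-actions, and that their canonical probability measures from Proposition \ref{prop:cross-section}(2) are exactly the normalized restrictions $\mu(A)^{-1}\mu_0\restriction_A$ and $\nu(B)^{-1}\nu_0\restriction_B$; hence Proposition \ref{prop:cross-section}(4) applies directly to $\R_A$ and $\R_B$, which are genuinely (not just stably) orbit equivalent, and simultaneity of their amenability is immediate from the definition in \cite{Connes-Feldman-Weiss}. Your route buys a more general and self-contained statement (invariance of amenability under stable orbit equivalence, with no ergodicity hypothesis in the middle step) at the cost of importing the hyperfiniteness machinery; the paper's route is shorter but leans on the covolume computation and on ergodicity to recognize $A$ and $B$ as cross sections. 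Either is a complete proof.
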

\begin{proof}
Invoking Proposition \ref{prop:free-ergodic-coupling},  we may choose a strict, free and ergodic measure coupling $(\Omega, \eta, X, \mu, Y,\nu, i,j)$ for which  the induced actions $G\curvearrowright (X,\mu)$ and $H\curvearrowright (Y,\nu)$ are therefore free and ergodic as well. By Lemma \ref{lem:ME-implies-SOE-cross}, we moreover have that for any choice of cross sections $X_0\subset X$ and $Y_0\subset Y$ the associated cross section equivalence relations are stably orbit equivalent. Denoting by $\mu_0$ and $\nu_0$ the measures on $X_0$ and $Y_0$ given by Proposition \ref{prop:cross-section}, we may therefore find  Borel subsets $A\subset X_0$ and $B\subset Y_0$ that are non-negligible with respect to $\mu_0$ and $\nu_0$, respectively, and such that the restricted orbit equivalence relation $\R_{X_0}\restriction_A$ is orbit equivalent with $\R_{Y_0}\restriction_B$. However, since the actions $G\curvearrowright (X,\mu)$ and $H\curvearrowright (Y,\nu)$ are ergodic, the sets $A$ and $B$ are also cross sections  and as the following computation shows,  the probability measure $\mu_A$ associated with $A$ by proposition Proposition \ref{prop:cross-section} is just a re-scaling of $\mu_0\restriction_A$: choosing an identity neighbourhood $U$ for $X_0$ as in Proposition \ref{prop:cross-section} and a Borel set $E\subset A$ we get
\[
\lambda_G(U)\mu_A(E)= \covol(A)\mu(U.E)=\frac{\covol(A)}{\covol(X_0)}\lambda_G(U)\mu_0(E).
\]
Since $\mu_A$ is a probability measure, it must be equal to $\mu(A)^{-1}\mu_0\restriction_A$ (and similarly for $B$). Now, by Proposition \ref{prop:cross-section} we have that $G$ is amenable if and only if the cross section equivalence relation $\R_{A}$ associated with $A$ is amenable and $H$ is amenable if and only if $\R_B$ is amenable and since $\R_A$ and $\R_B$ are orbit equivalent they are amenable (cf.~\cite[Definition 6]{Connes-Feldman-Weiss}) simultaneously.
\end{proof}

\section{Measure equivalence and property (T)}
\label{sec:T}
The aim of this section is to provide a proof of Theorem \ref{thm:me-and-property-T}, stating that property (T) is preserved under measure equivalence. This result may be known to experts in the field, but to the best of our knowledge has not been stated or proven explicitly anywhere. The proof follows the rough outline of the corresponding proof for discrete groups presented in \cite[Theorem  6.3.13]{BHV}, with a few additional measure theoretical wrinkles. Note  that groups with property (T) are automatically unimodular \cite[Corollary 1.3.6]{BHV}.

\begin{proof}[Proof of Theorem  \ref{thm:me-and-property-T}]
Assume towards a contradiction that $G$ has property (T) and $H$ does not. Then $H$ admits a strongly continuous  unitary representation $\pi\colon H\to \mathrm{U}(\H)$ with almost invariant vectors which does not contain any finite-dimensional subrepresentations; see \cite[Remark 2.12.11]{BHV} for this. 
Before embarking on the actual proof, we first show that the space $\H$ can be chosen separable.
  Since $H$ is lcsc, thus in particular hemicompact, we may choose an increasing sequence $K_n\subset H$ of compact subsets that is cofinal in the family of all compact subsets and has union $H$, and since $\pi$ has almost invariant vectors, for each $K_n$ there exists a unit vector $\xi_n\in \H$ such that $\|\pi(h).\xi_n-\xi_n\|<\tfrac{1}{n}$ for all $h\in K_n$. As $H$ is separable, the subspace
\[
\H_0:=\overline{\text{span}}\left\{ \pi(h).\xi_n \mid h\in H, n\in \NN \right\} \subset \H
\]
is separable and $H$-invariant and the restriction of $\pi$ to $\H_0$ still has almost invariant vectors and of course no finite dimensional subrepresentations since this was already the case for $\pi$. Thus, by replacing $\pi$ with its subrepresentation on $\H_0$ we may as well assume that $\H$ is separable.
Let $(\Omega,\eta, X,\mu, Y, \nu, i,j)$ be measure equivalence coupling between $G$ and $H$ and assume, as we may by Theorem \ref{thm:strict-coupling}, that $\Omega$ is strict, and denote by $\omega_G\colon H\times Y\to G $ and $\omega_H\colon G\times X \to H$ the associated (strict) measurable cocycles.  The representation $\pi$ can be induced up to a unitary representation $\tilde{\pi}$ of $G$ on $L^2(X,\H)$  given by
\[
\tilde{\pi}(g)(\xi)(x):=\pi\left(\omega_H(g^{-1},x)^{-1}\right)\xi(g^{-1}.x)= \pi\left(\omega_H(g,g^{-1}.x)\right)\xi(g^{-1}.x).
\]
Since $\omega_H$ is a measurable map, it follows that $g\mapsto \ip{\tilde{\pi}(g)\xi}{\xi}$ is measurable for all $\xi\in L^2(X,\H)$ and since  $(X,\mu)$ is standard Borel probability space and $\H$ is separable, $L^2(X, \H)$ is also separable. Lemma A.6.2 in \cite{BHV} therefore applies to show that
 $\tilde{\pi}$ is  a strongly continuous unitary representation. We now prove that the induced representation $\tilde{\pi}$ also has almost invariant vectors. To this end, note first that by \cite[Theorem 4.1]{deprez-li-ME} the cocycle $\omega_H\colon G\times X\to H$ is \emph{proper}  in the sense of \cite[Definition 2.2]{deprez-li-ME}, meaning  that there exists a family $\mathscr{A}$ of Borel sets in $X$ with, among others, the following two properties:
\begin{enumerate}
\item For every compact set $K\subset G$ and every $A,B\in \mathscr{A} $ there exists a precompact set $L\subset H$ such that for all $g\in K$
$
\omega_H(g,x)\in L \ \text{  and almost all $x\in A\cap g^{-1}.B$}.
$
\item For every $\varps>0$ there exists $A\in \mathscr{A}$ such that $\mu(X\setminus A)<\varps$.
\end{enumerate}
To prove that $\tilde{\pi}$ has invariant vectors,  let a compact subset $K\subset G$ and $\varps>0$ be given and choose a set $A\in \mathscr {A} $ with $\mu(X\setminus A)<\varps $ and a precompact set $L\subset H$ satisfying (1) with respect to the given compact set $K$ and $B=A$. Since $\pi$ is assumed to have almost invariant vectors, there exists a unit vector $\eta\in \H$ such that $\|\pi(h)\eta-\eta\|<\varps$ for all $h\in L$, and since $\mu$ is finite, the function $\tilde{\eta}(y):=\tfrac{1}{\sqrt{\mu(X)}}\eta$ is a unit vector in in $L^2(X,\H)$.  For $g\in K$ we therefore have
\begin{align*}
\|\tilde{\pi}(g) \tilde{\eta}- \tilde{\eta}  \|^2 &= \int_{X} \| \tilde{\pi}(g)\tilde{\eta}(x)-\tilde{\eta}(x)  \|^2 \mathrm{d}\mu(x)\\
&= \tfrac{1}{\mu(X)}\int_{X} \| \pi(\omega_H(g,g^{-1}.x))\eta- \eta  \|^2 \mathrm{d}\mu(x) \\
&=  \tfrac{1}{\mu(X)}\int_{A\cap g.A} \| \pi(\omega_H(g,g^{-1}.x))\eta- \eta  \|^2 \mathrm{d}\mu(x)  \\
&+ \tfrac{1}{\mu(X)} \int_{X\setminus (A\cap g.A)} \| \pi(\omega_H(g,g^{-1}.x))\eta- \eta  \|^2 \mathrm{d}\mu(x)\\
&\leq \varps^2 + \tfrac{1}{\mu(X)}4\mu(X\setminus A) +\tfrac{1}{\mu(X)}4\mu(X\setminus g.A)\\
&< \varps^2 + \tfrac{8\varps}{\mu(X)}.
\end{align*} 
As $\varps>0$ was arbitrary this shows that $\tilde{\pi}$ has almost invariant vectors and since $G$ is assumed to have property (T), this means that  $\tilde{\pi}$ must have a non-trivial invariant vector $ L^2(X,\H)$. Thus, any Borel representative $\xi_0\colon X \to \H$ for such an invariant vector satisfies that for all $g\in G$
\begin{align}\label{eq:tilde-xi-inv}
\pi\left(\omega_H\left(g^{-1}, x\right)^{-1}\right)\xi_0(g^{-1}.x)=\xi_0(x) \ \text{ for almost all } x\in X.
\end{align}
Consider now the unitary representation $\rho\colon H\to \mathrm{U}(L^2(Y,\H))$ given by
\[
\rho(h)(\zeta)(y):=\pi(h)\zeta(h^{-1}.y).
\]
The representation $\rho$ is unitarily equivalent with $\lambda_Y\otimes \pi$ on $L^2(Y)\bar{\otimes} \H$, where $\lambda_Y$ is the unitary representation induced by the measure preserving action $H\curvearrowright Y$. To reach the desired contradiction, we now prove that $\rho$ has a non-trivial invariant vector.  Indeed by \cite[Proposition A.1.12]{BHV}, this implies that $\pi$ contains a finite dimensional subrepresentation contradicting its defining properties.
Consider again the representative $\xi_0$ for the vector $L^2(X,\H)$ fixed by $\tilde{\pi}$ and extend $\xi_0$ to a Borel map $\tilde{\xi}_0\colon H\times X\to \H$ by setting
$
\tilde{\xi}_0(h,x):=\pi(h)\xi_0(x).
$
Note that for any $h_0\in H$ we have
\begin{align*}\label{eq:H-equiv-of-xi-zero}
\tilde{\xi}_0(h_0h,x)=\pi(h_0)\tilde{\xi}_0(h,x).
\end{align*}
Considering the $G$-action on $H\times X$ induced by $j^{-1}$ we therefore have that for all $g\in G$, all $h\in H$ and almost all $x\in X$
\[
\tilde{\xi}_0\left(g.(h,x)\right)=\tilde{\xi}_0\left(h\omega_H(g,x)^{-1}, g.x\right)=\pi(h)\tilde{\xi}_0\left(\omega_H(g,x)^{-1},g.x\right) 
\xlongequal{\eqref{eq:tilde-xi-inv}}
\pi(h)\xi_0(x)=\tilde{\xi}_0(h,x),
\]
Since $\H$ is separable it is, in particular, a Polish space and hence a standard Borel space with respect to its Borel $\sigma$-algebra. Moreover, for any $\eta\in\H $ the map $H\times \H\to \CC$ given by $(h,\xi)\mapsto \ip{\pi(h)\xi}{\eta}=\ip{\xi}{\pi(h^{-1})\eta}$ is jointly continuous, showing that the action map $H\times \H\to \H$ is weakly measurable, and thus measurable by Pettis' measurability theorem \cite[Theorem 1.1]{pettis}. 
Hence,  $\H$ is a standard Borel $G\times H$-space for the action $(g_0,h_0).\xi:=\pi(h_0)\xi$ and considering $H\times X$ as a $G\times H$-space for the action
\[
(g_0,h_0).(h,x):=(h_0h\omega_H(g_0,x)^{-1}, g_0.x),
\]
we obtain that the function $\tilde{\xi}_0\colon H\times X \to \H$ is almost everywhere $G\times H$-equivariant.
An application of \cite[Proposition B.5]{zimmer-book} provides us with a $G\times H$-invariant, conull, Borel subset $S \subset H\times X$ and a measurable function $\tilde{\xi}_0' \colon S \to \H $ which is genuinely $G\times H$-equivariant and agrees with $\tilde{\xi}_0$ almost everywhere. From $G\times H$-invariance of $S$ it follows that it has the form $S=H\times X_0$ for a $G$-invariant, conull, Borel subset $X_0\subset X$. Thus, by extending $\tilde{\xi}_0'$ by zero on $H\times (X\setminus X_0)$, we obtain  a measurable  map  $\tilde{\xi}_0'\colon H\times X\to \H $ such that
\begin{align*}
\tilde{\xi}_0' (h_0h,x)&=\pi(h_0)\tilde{\xi}_0'(h,x) \ \text{ for all }h_0\in H, (h, x)\in H\times  X,\\
\tilde{\xi}_0'\left(h\omega_H(g_0,x)^{-1}, g_0.x\right) &=\tilde{\xi}_0'(h,x) \ \text{ for all } g_0\in G, (h, x)\in H\times  X.
\end{align*}
The function $\tilde{\xi}_0'':=\tilde{\xi}_0'\circ j^{-1}\circ i \colon G\times Y \to \H$ therefore satisfies
\begin{align}
\tilde{\xi}_0''(g_0g,y)&=\tilde{\xi}_0''(g,y) \ \text{ for all } g_0\in G,(g,y)\in G\times Y, \label{eq:G-inv}\\
\tilde{\xi}_0''(g\omega_G(h_0,y)^{-1}, h_0.y) &=\pi(h_0)\tilde{\xi}_0''(g,y) \ \text{ for all } h_0\in H, (g,y)\in G\times Y \label{eq:H-inv}.  
\end{align}
Since $\xi_0\neq 0$, the function $\tilde{\xi}_0''$ cannot be zero almost everywhere and combining this with the $G$-equivariance \eqref{eq:G-inv} we infer that the Borel map $\zeta\colon Y\to \H $ given by $\zeta(y):= \tilde{\xi}_0''(e_G,y)$ is not $\nu$-almost everywhere zero. Hence, for a suitable choice of $M,\delta>0$, the set 
\[
Y_0:=\{y\in Y \mid \delta<\|\zeta(y)\|< M\}
\]
is $\nu$-non-negligible.
Moreover, for $h\in H$ and $y\in Y$ we have

\begin{align*}
\zeta(h.y)=\tilde{\xi}_0''(e_G,h.y) \xlongequal{\eqref{eq:G-inv}} \tilde{\xi}_0''\left(\omega_G(h,y)^{-1},h.y\right)  \xlongequal{\eqref{eq:H-inv}}\pi(h)\tilde{\xi}_0''(e_G,y) 
=\pi(h)\zeta(y),
\end{align*}
and since $\pi$ is a unitary representation it follows that $Y_0$ is $H$-invariant. Thus, setting $\zeta_0:=\bbb_{Y_0}\zeta$ we have that $\zeta_0\in L^2(Y,\H)\setminus \{0\} $.  By the previous computation it follows that
\[
\zeta_0(h.y)=\bbb_{Y_0}(h.y)\zeta(h.y)=\bbb_{h^{-1}.Y_0}(y)\pi(h)\zeta(y)= \bbb_{Y_0}(y) \pi(h)\zeta(y)= \pi(h)\zeta_0(y),
\]
showing that $\zeta_0$ is invariant for the representation $\rho$.  By the remarks above, this finishes the proof.
\end{proof}

\section{Uniform measure equivalence}\label{sec:uniform-me}

Measure equivalence can, via Gromov's dynamic criterion discussed below,  be seen as  a measure theoretic analogue of coarse equivalence, although, in general, neither of these notions implies the other (cf.~\cite{carette-haagerup}).
 Heuristically,  uniform measure equivalence should provide a notion of simultaneous measure equivalence and coarse equivalence, but one should note that it is not true that two (even discrete) groups that are measure equivalent and coarsely equivalent  are automatically uniformly measure equivalent, as \cite{das-tessera-integrable-me} shows. In this {section, we} provide a definition of uniform measure equivalence for unimodular lcsc groups extending the existing definition for finitely generated discrete groups. We  begin by recalling the notion of coarse equivalence, following the presentation in \cite{roe-lecture-notes}.

\begin{definition}
  \label{def:coarse-space}
  Let $X$ be a set.  A \emph{coarse structure} on $X$ is a collection of subsets $\mathcal E \subset \mathcal P(X \times X)$ called \emph{controlled sets} such that the following requirements are satisfied{:}
  \begin{itemize}
  \item[(i)] {t}he diagonal is controlled{;}
  \item[(ii)] {a} subset of a controlled set is controlled{;}  
  \item[(iii)] {a} finite union of controlled sets is controlled{;}
  \item[(iv)] {i}f $E \in \mathcal E$ is controlled, then so is $\{(x,y) \in X \times X \mid (y,x) \in E\}${;}
  \item[(v)] {i}f $E_1, E_2 \in \mathcal E$, then so is $\{(x, z) \in X \times X \mid \exists y \in X: (x,y) \in E_1, (y, z) \in E_2\} ${.}
  \end{itemize}
  A set equipped with a coarse structure is called a \emph{coarse space}.
\end{definition}

\begin{definition}
  \label{def:bounded-cobounded}
  Let $X$ be a coarse space.
  \begin{itemize}
  \item[(i)] A subset $B \subset X$ is called \emph{bounded} if $B \times B$ is controlled.
  \item[(ii)] A subset $C \subset X$ is \emph{cobounded} if there is a controlled set $E$ such that for all $x\in X$ there exists $c\in C$ such that $(c,x)\in E$.
  \end{itemize}
\end{definition}

\begin{definition}
  \label{def:coarse-maps}
  Let $f\colon X \rightarrow Y$ be a map between coarse spaces.  Then $f$ is \emph{bornologous} if $f \times f$ maps controlled sets to controlled sets and $f$ is \emph{proper} if preimages of bounded sets  are bounded.  If $f$ is bornologous and proper, then it is said to be a \emph{coarse map}.  Further, a \emph{coarse embedding} is a map $f\colon X \rightarrow Y$ such that $E \subset X \times X$ is controlled if and only if $(f \times f)({E}) \subset Y \times Y$ is controlled.  Finally, $f$ is a \emph{coarse equivalence} if it is a coarse map and there is a coarse map $g\colon Y \rightarrow X$ such that $\{(x, g \circ f(x)) \in X \times X \mid x \in X\}$ and $\{(y, f \circ g(y) \in Y \times Y \mid y \in Y\}$ are controlled.
\end{definition}

\begin{remark}
  \label{rem:characterisation-coarse-equivalence}
    Coarse equivalences between two coarse spaces $X$ and $Y$ can be characterized as those coarse embeddings $f: X \rightarrow Y$ whose image is cobounded.
\end{remark}

Let us describe the main examples of coarse spaces relevant in the context of uniform measure equivalence.
  \begin{enumerate}
  \item If $G$ is a locally compact group it admits two natural coarse structures: 
   The controlled sets of the {\emph{left coarse structure}} on $G$ are all subsets of sets of the form
    \begin{equation*}
       \{(g,h) \in G \times G \mid g^{-1}h \in K\}
       =
       \{(g,h) \in G \times G \mid h \in g K\}
       \text{,}
    \end{equation*}
    where $K$ runs through all compact subsets of $G$. The controlled sets of the {\emph{right coarse structure}} on $G$ are all subsets of sets of the form
    \begin{equation*}
       \{(g,h) \in G \times G \mid hg^{-1} \in K\}
       =
       \{(g,h) \in G \times G \mid h \in Kg \}
       \text{,}
    \end{equation*}
    where again $K$ runs through all compact subsets of $G$.   The inversion of $G$ is a bijective coarse equivalence between these two coarse structures.  Note that the left coarse structure of $G$ is left-invariant, while the right coarse structure of $G$ is right-invariant.
    
  \item Any set $X$ can be turned into a bounded coarse space, by declaring all subsets of $X \times X$ controlled.
  \item If $X$, $Y$ are coarse spaces, then $X \times Y$ is a coarse space whose controlled sets are those subsets of $X \times Y \times X \times Y$ whose projection to $X \times X$ and $Y \times Y$ are controlled.  
 \item If $(X,d)$ is a metric space then the \emph{bounded coarse structure} on $X$ is defined by declaring a set $E\subset X\times X$ controlled if
 \[
 \sup\{d(x,y)\mid x,y\in E\}<\infty.
 \]
 In this situation, a coarse equivalence between metric spaces $(X,d_X)$ and $(Y,d_Y)$ can be described in terms of the metrics as a map $f\colon X\to Y$ with cobounded image ({i.e. $\sup_{y\in Y}d(f(X),y)<\infty$}) which satisfies that
 \[
 \lim_{n\to \infty} d_X(x_n,x_n')=\infty \quad \textrm{{if and only if}} \quad \lim_{n\to \infty }d_Y(f(x_n), f(x_n'))=\infty,
 \]
 for all sequences $(x_n), (x_n')$ in $X$.
 
 \end{enumerate}
If $G$ and $H$ are lcsc groups we say that they are coarsely equivalent  if if they are so when endowed with their left (equivalently right) coarse structures.
Equivalently, $G$ and $H$ are coarsely equivalent if they are so as metric spaces when both are endowed with any proper, compatible, left (equivalently right) invariant metric. Recall that a metric on $G$ is \emph{compatible} if it induces the original topology on $G$ and \emph{proper} if closed balls are compact, and that {any lcsc group admits a compatible, proper and left invariant  metric \cite{Struble}}. 
Note also that for compactly generated  lcsc groups, coarse equivalence coincides with the notion of {quasi-isometry with} respect to the {word metrics} arising from some/{any} compact generating sets \cite[Proposition 4.B.10]{yves-book}.

\begin{defi}\label{def:UME}
Let $G$ and $H$ be unimodular lcsc groups. A strict {measure equivalence} $G$-$H${-}coupling $(\Omega, \eta, X, \mu, Y, \nu, i,j) $ is said to be \emph{uniform} if the compositions $i^{-1}\circ j$ and $j^{-1}\circ i$ are proper maps {with respect to the product coarse structure} when $G$ and $H$ are endowed with the left coarse structure and $X$ and $Y$ are declared bounded. A {measure equivalence} $G$-$H${-}coupling $(\Omega, \eta, X, \mu, Y, \nu, i, j) $ is said to be uniform if there exist conull Borel subsets $\Omega_0\subset \Omega$, $X_0\subset X$ and $Y_0\subset Y$ such that $(\Omega_0, \eta, X_0, \mu, Y_0, \mu, \nu, i\restriction_{G\times Y_0}, j\restriction_{H\times X_0}) $ is a strict uniform measure equivalence $G$-$H$-coupling.  Lastly, $G$ and $H$ are said to be {uniformly measure equivalent if} they admit a uniform measure equivalence coupling.
\end{defi}

Since bounded sets in $G$ and $H$ are simply pre-compact sets, {cf. \cite[Example 2.24]{roe-lecture-notes}}, the condition that a strict {measure equivalence} $G$-$H$-coupling be uniform simply amounts to the following:
\begin{itemize}
\item {f}or every compact $C\subset G$ there exists a compact $D\subset H$ such that $j^{-1}\circ i(C\times Y)\subset D\times X${;} 
\item {f}or every compact $D\subset H$ there exists a compact $C\subset G$ such that $i^{-1}\circ j(D\times X)\subset C\times Y$.
\end{itemize}

\begin{lem}
Uniform measure equivalence is an equivalence relation
\end{lem}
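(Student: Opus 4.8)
The plan is to check the three defining properties of an equivalence relation separately; reflexivity and symmetry are immediate from the shape of Definition \ref{def:UME}, while transitivity requires building an explicit composite coupling and propagating the compactness conditions through it.

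For \textbf{reflexivity} I would exhibit the self-coupling $\Omega=G$ with Haar measure $\eta=\lambda_G$ and $G\times G$-action $(g_1,g_2)\triangleright t = g_1 t g_2^{-1}$, which is measure preserving by unimodularity, together with the one-point spaces $X=Y=\{*\}$ carrying the Dirac probability measure and the Borel isomorphisms $i(g,*)=g$ and $j(h,*)=h^{-1}$. This is a strict coupling, and since $j^{-1}\circ i(C\times Y)=C^{-1}\times X$ for every compact $C\subset G$, taking $D:=C^{-1}$ (and arguing symmetrically for $i^{-1}\circ j$) verifies the two compactness conditions recorded after Definition \ref{def:UME}; hence $G$ is uniformly measure equivalent to itself. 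For \textbf{symmetry} I would simply note that the uniformity conditions are symmetric in the two structure maps: if $(\Omega,\eta,X,\mu,Y,\nu,i,j)$ is a uniform $G$--$H$-coupling, then $(\Omega,\eta,Y,\nu,X,\mu,j,i)$ is an $H$--$G$-coupling whose two uniformity requirements are exactly the assumptions on $i^{-1}\circ j$ and $j^{-1}\circ i$ that already hold, so it too is uniform.

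The substance lies in \textbf{transitivity}. Suppose $G$ is uniformly measure equivalent to $H$ and $H$ to $K$. Using Definition \ref{def:UME} together with Theorem \ref{thm:strict-coupling} I may assume the two couplings are strict and uniform, say a $G$--$H$-coupling $(\Omega_1,\eta_1,X_1,\mu_1,Y_1,\nu_1,i_1,j_1)$ and an $H$--$K$-coupling $(\Omega_2,\eta_2,X_2,\mu_2,Y_2,\nu_2,i_2,j_2)$. I would form the usual composite $\Omega:=(\Omega_1\times\Omega_2)/H$, where $H$ acts by the second ($H$-)factor on $\Omega_1$ and the first ($H$-)factor on $\Omega_2$, equipped with the residual commuting $G$- and $K$-actions; that this is again a measure equivalence coupling is guaranteed by transitivity of measure equivalence (\cite[Appendix A]{BFS-integrable}). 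Identifying $\Omega\cong\Omega_1\times Y_2$ using $i_2$ shows that $Y:=Y_1\times Y_2$ is a $G$-fundamental domain and that $i(g,y_1,y_2):=(i_1(g,y_1),y_2)$ is a $G$-equivariant isomorphism; dually, identifying $\Omega\cong X_1\times\Omega_2$ using $j_2$ shows $X:=X_1\times X_2$ is a $K$-fundamental domain with $j(k,x_1,x_2):=(x_1,j_2(k,x_2))$ a $K$-equivariant isomorphism. Both fundamental domains carry the finite product measures, and after passing to conull subsets I obtain a strict composite coupling between $G$ and $K$.

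It then remains to check uniformity of this composite, and here the key point is that its coordinate-change map $j^{-1}\circ i\colon G\times Y\to K\times X$ factors through the analogous maps of the two given couplings: tracing a point $(g,y_1,y_2)$ through the two identifications of $\Omega$, the $K$-component of $j^{-1}\circ i(g,y_1,y_2)$ is, up to the measure-preserving bookkeeping of the quotient, given by the $K$-valued cocycle data of $\Omega_2$ evaluated at the $H$-valued cocycle data of $\Omega_1$ (in the notation of Section \ref{sec:me}, a composite of $\omega_H$ for $\Omega_1$ with $\omega_G$ for $\Omega_2$). Consequently, if $C\subset G$ is compact, uniformity of $\Omega_1$ produces a compact $D\subset H$ capturing the $H$-coordinate of $j_1^{-1}\circ i_1(C\times Y_1)$, and uniformity of $\Omega_2$ then produces a compact $E\subset K$ capturing the $K$-coordinate obtained from $D$; chaining the two containments gives $j^{-1}\circ i(C\times Y)\subset E\times X$, and the reverse condition follows symmetrically by composing the bounds in the other order. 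I expect the main obstacle to be precisely this middle step: writing the composite coupling map explicitly enough to identify it as a composition of the two coordinate-change maps, while keeping the measure-theoretic identifications (the choice of $H$-orbit representatives and the attendant null sets) rigorous in the non-discrete setting; once that identification is in place, the properness conclusion is a purely formal composition of the precompactness estimates.
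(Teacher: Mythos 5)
Your proposal is correct and follows essentially the same route as the paper: the identity coupling $(G,\lambda_G)$ with the two-sided translation action for reflexivity, the flipped coupling for symmetry, and the Bader--Furman--Sauer composition for transitivity, with uniformity of the composite obtained by chaining the properness of $j_1^{-1}\circ i_1$ and $i_2^{-1}\circ j_2$. The ``middle step'' you flag is handled in the paper simply by writing the composite's structure maps explicitly, as $(j_1^{-1}\circ i_1)\times\id_{X_2}$ and $(\mathrm{inv}\times\sigma)\circ\bigl((i_2^{-1}\circ j_2)\times\id_{X_1}\bigr)$ into the model $H\times X_1\times X_2$, so that the coordinate change of the composite coupling is literally a composition of proper maps.
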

\begin{proof}
Fixing a Haar measure $\lambda_G$ on $G$, it is clear that $(G,\lambda_G)$ is a strict uniform $G$-$G$ measure equivalence coupling when endowed with the action $(g_1,g_2).g:=g_1gg_2^{-1}$, thus showing reflexivity of the relation. If $(\Omega,\eta)$ is a uniform $G$-$H$ measure equivalence coupling then the dual coupling $(\overline{\Omega},\bar{\eta})$, which as a measure space is identical to $(\Omega,\eta)$ {but with the} $H\times G$-action $(h,g)\triangleright t:=(g,h).t$, is uniform as well. Hence, {the relation is symmetric}.
To show transitivity, we recall the composition of measure equivalence couplings as described in Section A.1.3 of \cite{BFS-integrable}{:  if} $G_1, G_2, H$ are {unimodular lcsc groups}, and $(\Omega_1, \eta_1, X_1,\mu_1,Y_1,\nu_1,i_1,j_1)$ is a (strict uniform) measure equivalence $G_1$-$H$-coupling and $(\Omega_2,\eta_2, Y_2,\mu_2,X_2,\nu_2,i_2,j_2) $ is a (strict uniform) measure equivalence $H$-$G_2$-coupling, then their composition is defined as $(\Omega,\eta):=(H\times X_1\times X_2, \lambda_H\times \mu_1\times \mu_2)$ with $G_1\times G_2$-action given by
\[
(g_1,g_2). (h,x_1,x_2):=\left(\omega_H^{(2)}(g_2,x_2)h\omega_H^{(1)}(g_1,x_1)^{-1},g_1.x_1,g_2.x_2 \right),
\]
where $\omega_H^{(1)}$ and $\omega_H^{(2)}$ are the cocycles associated with $\Omega_1$ and $\Omega_2$, respectively.
By Section A.1.3 of \cite{BFS-integrable}, the following maps witness that $\Omega$ is indeed a $G_1$-$G_2$ measure equivalence coupling:
\begin{equation*}
  \xymatrix@C+6pc
  {
    G_1 \times Y_1 \times X_2 \ar[r]^{(j_1^{-1} \circ \, i_1) \times \id_{X_2}} &
    H \times X_1 \times X_2 &
    G_2 \times Y_2 \times X_1 \ar[l]_{  (\text{inv} \times \sigma) \circ (i_2^{-1} \circ \, j_2\times \id_{X_1})}
    \text{,}
  }
\end{equation*}
where $\text{inv}$ denotes inversion in $H$ and $\sigma$ denotes the coordinate flip on $X_1\times X_2$.
Assuming now that both $\Omega_1$ and $\Omega_2$ are {strict uniform} measure equivalence couplings then both $j_1^{-1} \circ \, i_1$ and $i_2^{-1} \circ \, j_2$ as well as their inverses are proper maps, and {{since $(\sigma \times \text{inv})$ is proper as well,} $\Omega$} is also a strict uniform measure equivalence coupling. 
 \end{proof}

\begin{lem}\label{lem:cocycles-loc-bd}
  A strict measure equivalence {$G$-$H$-}coupling $(\Omega, \eta, X, \mu, Y, \nu, i,j) $ is uniform  if and only if
  \begin{itemize}
  \item the sets $i^{-1} \circ j(\{e_H\} \times X) \subset G \times Y$ and $j^{-1} \circ i (\{e_G\} \times Y) \subset H \times X$ are bounded, and 
  \item the associated cocycles {$\omega_G$ and $\omega_H$} are locally bounded; i.e.~for every compact {subset} $C\subset G$ there exists a compact {subset} $D\subset H$ such that $\{\omega_H(g,x)\mid g\in C, x\in X\}\subset D$ and similarly for $\omega_G$.
  \end{itemize}
\end{lem}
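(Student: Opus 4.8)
The plan is to convert both ``uniformity'' and the two asserted conditions into explicit formulas for the maps $j^{-1}\circ i$ and $i^{-1}\circ j$, and then read off the equivalence. First I would record, using strictness of the coupling and the defining relation $(g,e_H)\triangleright j(h,x)=j(h\omega_H(g,x)^{-1},g.x)$ for $\omega_H$ (and its analogue for $\omega_G$), the section maps obtained by evaluating at the identity: writing $j^{-1}\circ i(e_G,y)=(\beta(y),\alpha(y))\in H\times X$ and $i^{-1}\circ j(e_H,x)=(\gamma(x),\delta(x))\in G\times Y$, a direct computation with the commuting $G$- and $H$-actions gives
\[
j^{-1}\circ i(g,y)=\bigl(\beta(y)\,\omega_H(g,\alpha(y))^{-1},\,g.\alpha(y)\bigr),\qquad i^{-1}\circ j(h,x)=\bigl(\gamma(x)\,\omega_G(h,\delta(x))^{-1},\,h.\delta(x)\bigr).
\]
Since bounded subsets of $H\times X$ (with $X$ declared bounded and $H$ carrying its left coarse structure) are exactly those with precompact $H$-projection, the reformulation of uniformity stated just after Definition \ref{def:UME} amounts to saying that $j^{-1}\circ i(C\times Y)$ has precompact $H$-projection for every compact $C\subset G$, and symmetrically for $i^{-1}\circ j$.

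Evaluating the first bullet is immediate: taking $g=e_G$ (so $\omega_H(e_G,\cdot)=e_H$) gives $j^{-1}\circ i(\{e_G\}\times Y)=\{(\beta(y),\alpha(y))\mid y\in Y\}$, whose boundedness is equivalent to precompactness of $\beta(Y)$; likewise boundedness of $i^{-1}\circ j(\{e_H\}\times X)$ is equivalent to precompactness of $\gamma(X)$. For the backward implication I would argue directly from the displayed formula: if $\beta(Y)$ is precompact and $\omega_H$ is locally bounded, then for compact $C\subset G$ the $H$-coordinate $\beta(y)\omega_H(g,\alpha(y))^{-1}$ lies in the compact set $\overline{\beta(Y)}\cdot \hat D^{-1}$, where $\hat D$ is a compact set containing $\omega_H(C\times X)$. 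This gives the first uniformity bullet, and the second follows symmetrically from precompactness of $\gamma(X)$ and local boundedness of $\omega_G$.

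The forward implication is where the real work lies, and the main obstacle is that properness of $j^{-1}\circ i$ only bounds $\omega_H(g,\cdot)$ on the transversal $\alpha(Y)\subset X$, which is in general far from conull; the cocycle identity alone cannot upgrade this to a bound over all of $X$, since reaching an arbitrary $x$ from $\alpha(Y)$ a priori requires an unbounded group element. To overcome this I would exploit the first condition in the other direction: precompactness of $\gamma(X)$, say $\gamma(X)\subset C_0$ with $C_0$ compact, together with the identity $x=\gamma(x).\alpha(\delta(x))$ — obtained by comparing $X$-coordinates in $j(e_H,x)=(\gamma(x),e_H)\triangleright i(e_G,\delta(x))$ — shows that every $x\in X$ is reached from the transversal point $\alpha(\delta(x))$ by the uniformly bounded element $\gamma(x)\in C_0$. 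Hence $X=C_0.\alpha(Y)$, and for $g$ in a compact set $C$ the cocycle identity yields
\[
\omega_H(g,x)=\omega_H\bigl(g\gamma(x),\alpha(\delta(x))\bigr)\,\omega_H\bigl(\gamma(x),\alpha(\delta(x))\bigr)^{-1},
\]
where now both first arguments $g\gamma(x)\in CC_0$ and $\gamma(x)\in C_0$ lie in a fixed compact set while both second arguments lie in $\alpha(Y)$. Applying to each factor the transversal bound coming from uniformity (with the compact sets $CC_0$ and $C_0$) shows $\omega_H(C\times X)$ is precompact; the symmetric argument bounds $\omega_G$, completing the forward direction.

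I expect the only delicate points beyond this to be the bookkeeping of the conull Borel sets on which the Borel isomorphisms $i,j$, the section maps $\alpha,\beta,\gamma,\delta$, and the strict cocycles are honestly defined. This is routine given the strictness and everywhere-defined equivariance arranged in Theorem \ref{thm:strict-coupling}, so I would treat it briefly and focus the exposition on the propagation step in the third paragraph, which is the genuinely new ingredient.
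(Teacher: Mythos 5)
Your argument is correct and follows essentially the same route as the paper's: both directions ultimately rest on the identity $(\omega_H(g,x)^{-1},g.x)=(j^{-1}\circ i)\bigl(g\gamma(x),\delta(x)\bigr)$, with precompactness of $\gamma(X)$ (the first bullet) doing the propagation from the identity slice to all of $X$. The paper's forward direction gets the bound in a single application of properness to $(j^{-1}\circ i)(CC'\times Y)$ with $\gamma(X)\subset C'$, rather than splitting $\omega_H(g,x)$ via the cocycle identity into two factors bounded on the transversal $\alpha(Y)$, but this is only a repackaging of the same computation.
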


\begin{proof}
  First assume that $(\Omega, \eta, X, \mu, Y, \nu, i,j) $ is a strict uniform measure equivalence {$G$-$H$-}coupling.  Then the sets $i^{-1} \circ j(\{e_H\} \times X) \subset G \times Y$ and $j^{-1} \circ i (\{e_G\} \times Y) \subset H \times X$ are bounded by assumption.  Further, let a compact subset $C \subset G$ be given and fix $g \in C$ and $x \in X$. Then we have
\begin{align*}
\left(\omega_H(g,x)^{-1}, g.x\right) =j^{-1}\left(g.j(e_H,x)\right)
&= \left(j^{-1}\circ i\right)\left(g.i^{-1}\circ j\right)(e_H,x).
\end{align*}
Since $(i^{-1}\circ j)^{-1} = j^{-1} \circ i$ is proper there exists a compact subset  $C'\subset G$ such that ${i^{-1}\circ j}(\{e_H\}\times X)\subset C' \times Y$ and hence
$
g.(i^{-1}\circ j)(e_H,x) \in CC'\times Y.
$
Since $(j^{-1}\circ i)^{-1} = i^{-1} \circ j$ is also proper, there exists a compact subset $D\subset H$ such that ${j^{-1}\circ i}(CC'\times Y)\subset D\times X$ and we therefore obtain that $\{\omega_H(g,x)\mid g\in C, x\in X\}\subset D^{-1}$ as desired. The similar claim about $\omega_G$ follows by symmetry. So we proved that the cocycles $\omega_G$ and $\omega_H$ are locally bounded.

Vice versa, let us assume that the sets $i^{-1} \circ j(\{e_H\} \times X) \subset G \times Y$ and $j^{-1} \circ i (\{e_G\} \times Y) \subset H \times X$ are bounded and the associated cocycles {$\omega_G$ and $\omega_H$} are locally bounded.  Let $C \subset G$ be a compact set and find a compact subset $D \subset H$ satisfying $\omega_H(C, X) \subset D$.  Let $D' \subset H$ be a compact subset such that $(j^{-1} \circ i)(\{e_G\} \times Y) \subset D' \times X$.  Then
\begin{gather*}
  (j^{-1} \circ i)(C \times Y)
  \subset
  \omega_H(C, X) (j^{-1} \circ i)(\{e_G\} \times Y)
  \subset
  D D' \times X
\end{gather*}
shows that $i \circ j^{-1} = (j^{-1} \circ i)^{-1}$ is proper.  Properness of $j \circ i^{-1}$ follows by symmetry.
\end{proof}

\begin{remark}
{The above lemma shows that} the notion of {uniform measure equivalence} introduced in Definition \ref{def:UME} agrees with the already established notion for discrete groups; cf.~\cite[Definition 2.23 \& Lemma 2.24]{sauer-thesis}, where the terminology \emph{bounded} measure equivalence is used.

\end{remark}

Our definition of uniform measure equivalence is motivated by the fact that two 
cocompact, unimodular, closed subgroups of the same  lcsc group should be uniformly measure equivalent. {This fact was also among the} motivations for introducing uniform measure equivalence in the setting of discrete groups.  Before showing that this is indeed the case, we prove a  lemma that describes the relationship between coarse structures and Haar measures of an lcsc group and its homogeneous spaces. For its proof we need the existence of certain {Borel sections} and since these will appear in a number of instances in the sequel we single this out in form of the following theorem, definition and example.

\begin{thm}[Arsenin and Kunugui. See {\cite[Theorem (18.18)]{kechris-book}}]
  \label{thm:borel-section}
  If $X$ is a standard Borel space and $Y$ is a Polish space and $P\subset X\times Y$ is a Borel set for which $P_x:=\{y\in Y \mid (x,y)\in P\}$ is $\sigma$-compact for every $x\in X$, then the image $\pi_X(P)$ under the projection $\pi_X\colon X\times Y\to X$ is Borel in $X$ and there exists a Borel function $s\colon \pi_X(P) \to Y$ with the property that $(x,s(x))\in P$ for all $x\in \pi_X(P)$.
\end{thm}

\begin{defi}
  Let $X$ and $Y$ be  standard Borel spaces and $P\subset X\times Y$ a Borel set such that $\pi_X(P)$ is Borel.  A Borel function $s \colon \pi_X(P) \to Y$ such that that $(x,s(x))\in P$ for all $x\in \pi_X(P)$ is called a \emph{Borel section} for $P$.
\end{defi}

\begin{ex}
  If $H \leq G$ is a closed subgroup of an lcsc group, we obtain {Borel sections} $G/H \rightarrow G$ and $H \backslash G \rightarrow G$.  If furthermore, $H$ is cocompact in $G$, then there is a compact subset $K \subset G$ that maps surjectively onto $G/H$ and $H \backslash G$ 
and the {sections} may be chosen to have their image in $K$. More generally, if an lcsc group $G$ acts continuously, properly and cocompactly on an lcsc Hausdorff space $\Omega$, then there exists a {Borel section} $\Omega/G \to \Omega$ which is \emph{bounded}, in the sense that it takes values in a compact set.
\end{ex}


\begin{lemma}
  \label{lem:group-homomogeneous-space-coarse-measure-structure}
  Let $H$ be a closed subgroup of an lcsc group $G$.
  Assume that $G/H$ and $H \backslash G$ carry  $G$-invariant measures $\lambda_{G/H}$ and $\rho_{H \backslash G}$, respectively.  If $s\colon G/ H \rightarrow G$ is a Borel section, then the push-forward of $\lambda_{G/H} \times \lambda_H$ along the map $G / H \times H \rightarrow G: (x,h) \mapsto s(x)h$ is a left Haar measure of $G$.  Similarly, if $s\colon H \backslash G \rightarrow G$ is a Borel section, then the push-forward of $\rho_H \times \rho_{H \backslash G}$ along $H \times H \backslash G \rightarrow G: (h,x) \mapsto h s(x)$ is a right Haar measure for $G$.
\end{lemma}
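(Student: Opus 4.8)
The plan is to recognise the map $\Phi\colon G/H \times H \to G$, $\Phi(x,h) = s(x)h$, as a Borel isomorphism and then to prove directly that $\mu := \Phi_*(\lambda_{G/H}\times\lambda_H)$ is a nonzero, locally finite, left-invariant Borel measure, so that it is a left Haar measure by uniqueness of Haar measure. First I would check that $\Phi$ is a Borel bijection with Borel inverse: since the quotient map $\pi\colon G \to G/H$ satisfies $\pi\circ s = \id_{G/H}$, every $g\in G$ is written uniquely as $g = s(\pi(g))\cdot\bigl(s(\pi(g))^{-1}g\bigr)$ with $s(\pi(g))^{-1}g\in H$, and $g\mapsto (\pi(g), s(\pi(g))^{-1}g)$ is the Borel inverse of $\Phi$. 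Consequently, for every Borel $f\colon G\to[0,\infty]$ one has
\[
\int_G f\,\mathrm d\mu = \int_{G/H}\int_H f(s(x)h)\,\mathrm d\lambda_H(h)\,\mathrm d\lambda_{G/H}(x).
\]

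The core of the argument is the function $Tf(x):=\int_H f(s(x)h)\,\mathrm d\lambda_H(h)$, which I would show descends to a well-defined function on $G/H$ transforming equivariantly under the $G$-action. The key mechanism is left-invariance of $\lambda_H$: if $g\in G$ is any representative of the coset $x$, then $g = s(x)h_0$ for some $h_0\in H$, and the substitution $h\mapsto h_0^{-1}h$ gives $\int_H f(gh)\,\mathrm d\lambda_H(h) = Tf(x)$; in particular, for $g_0\in G$ the element $g_0 s(x)$ represents the coset $g_0 x$, whence $\int_H f(g_0 s(x)h)\,\mathrm d\lambda_H(h) = Tf(g_0 x)$. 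Left-invariance of $\mu$ then follows from $G$-invariance of $\lambda_{G/H}$ via
\[
\int_G f(g_0 g)\,\mathrm d\mu(g) = \int_{G/H} Tf(g_0 x)\,\mathrm d\lambda_{G/H}(x) = \int_{G/H} Tf(x)\,\mathrm d\lambda_{G/H}(x) = \int_G f\,\mathrm d\mu,
\]
valid for all $g_0\in G$, which is exactly the left-invariance $\mu(g_0 E)=\mu(E)$.

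It remains to see that $\mu$ is nonzero, $\sigma$-finite and locally finite, so that uniqueness of Haar measure identifies it as a left Haar measure. Nonvanishing and $\sigma$-finiteness are immediate, since $\Phi$ is a Borel isomorphism and $\lambda_{G/H}$, $\lambda_H$ are nonzero and $\sigma$-finite (the spaces being $\sigma$-compact). For local finiteness I would fix a compact $K\subset G$, choose $f\in \mathrm{C}_{\mathrm{c}}(G)$ with $\bbb_K\le f$ and set $L:=\supp f$, and bound $Tf$: its support lies in the compact set $\pi(L)$, and for $x\in\pi(L)$, picking $h_0\in H$ with $s(x)h_0\in L$ yields, once more by left-invariance of $\lambda_H$, the uniform estimate $Tf(x)\le \|f\|_\infty\,\lambda_H(L^{-1}L\cap H)$, where $L^{-1}L\cap H$ is compact. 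Hence $\mu(K)\le\int_G f\,\mathrm d\mu\le \|f\|_\infty\,\lambda_H(L^{-1}L\cap H)\,\lambda_{G/H}(\pi(L))<\infty$, the invariant measure $\lambda_{G/H}$ being finite on compact sets.

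Finally, the statement for $H\backslash G$ and the right Haar measure follows by the symmetric argument applied to $\Psi\colon H\times H\backslash G\to G$, $(h,x)\mapsto h\,s(x)$, using right-invariance of the Haar measure $\rho_H$ on $H$ together with $G$-invariance of $\rho_{H\backslash G}$; alternatively it can be deduced from the first part via the inversion $g\mapsto g^{-1}$, which interchanges the left and right coset structures and carries left Haar measure to right Haar measure. The main obstacle is the local finiteness step: one must control $Tf$ uniformly in the choice of coset representative even though the Borel choice function $s$ need not be bounded, and the device that makes this work---and that in fact underlies the whole proof---is the reduction to a representative lying in a fixed compact set by means of the left-invariance of $\lambda_H$.
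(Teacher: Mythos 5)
Your proof is correct, but it takes a genuinely different route at the decisive step. Both you and the paper begin identically, by observing that $(x,h)\mapsto s(x)h$ is a Borel bijection whose inverse $g\mapsto(\pi(g),s(\pi(g))^{-1}g)$ is Borel, so that the push-forward is computed by the iterated integral $\int_{G/H}\int_H f(s(x)h)\,\mathrm d\lambda_H(h)\,\mathrm d\lambda_{G/H}(x)$. At that point the paper simply identifies this expression with $\int_G f\,\mathrm d\lambda_G$ by citing the Weil integration formula \cite[Corollary B.1.7]{BHV} (noting that the inner integral depends only on the coset, so the choice function $s$ is harmless), whereas you re-derive that formula from scratch: you verify that the push-forward is nonzero, $\sigma$-finite, locally finite and left-invariant, and then invoke uniqueness of Haar measure on $G$. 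Your invariance computation (reduction of $Tf$ to a coset function via left-invariance of $\lambda_H$, then $G$-invariance of $\lambda_{G/H}$) and your local-finiteness estimate $Tf(x)\leq\|f\|_\infty\,\lambda_H(L^{-1}L\cap H)$ on $\pi(L)$ are both sound; the only implicit assumption is that the invariant measure $\lambda_{G/H}$ is finite on compact sets, i.e.\ Radon, which is the standard convention (and the one under which the uniqueness statement in \cite[Corollary B.1.7]{BHV} cited by the paper holds), so nothing is lost. The trade-off is the usual one: your argument is self-contained and makes the mechanism visible, at the cost of essentially reproving a textbook quotient-integration theorem that the paper outsources in one line.
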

\begin{proof}
  We only prove the cases of right cosets.  Let $s\colon H \backslash G \rightarrow G$ be a Borel section for $G \rightarrow H \backslash G$ and  let $r\colon G \rightarrow H$ be the retract defined by the requirement that $r(g) s(Hg)  = g$ for all $g \in G$.  Define $i\colon H \times H \backslash G \rightarrow G$ by $i(h, Hg) := h s(Hg)$ and note that $i$ is a {Borel} isomorphism with inverse $i^{-1}(g) = (r(g), Hg)$, since $r(g) s(Hg) = g$ for all $g \in G$ and
\begin{align*}
  (r(h s(Hg)), H h s(Hg))
  & =
  (hs(Hg) s(Hs(Hg))^{-1}, Hs(Hg)) \\
  & = 
  (h s(Hg) s(Hg)^{-1}, Hg) \\
  & =
  (h, Hg)
\end{align*}
for all $(h, Hg) \in H \times H \backslash G$. \\
We now assume that $H \backslash G$ admits a $G$-invariant measure $\rho_{H \backslash G}$ and denote by $s\colon H \backslash G \rightarrow G$  a Borel section.  Note that  $\rho_{H \backslash G}$ is unique up to scaling by \cite[Corollary B.1.7]{BHV}.
Let $f \in \mathrm{C}_{\mathrm c}(G)$.  Then
  \begin{align*}
    \int_G f(g) \mathrm{d} i_*(\rho_H \times \rho_{H \backslash G}) (g)
    & =
      \int_{H \times H \backslash G} (f \circ i) (h,Hg) \mathrm{d} (\rho_H \times \rho_{H \backslash G})(h,Hg)  \\
    & =
    \int_{H \backslash G} \int_H f(h s(H g)) \mathrm{d} \rho_H(h) \mathrm{d} \rho_{H \backslash G}(Hg) \\
    & =
    \int_G f(g) \mathrm{d} \rho_G(g),
  \end{align*}
  were that last equality follows from \cite[Corollary B.1.7]{BHV} for a suitable choice of right Haar measure $\rho_G$.  This finishes the proof.
\end{proof}

\begin{proposition}
  \label{prop:cocompact-ume}
 Let $H_1$ and $H_2 $ be two cocompact,  unimodular, closed subgroups of an lcsc group $G$.  Then $H_1$ and $H_2$ are uniformly measure equivalent.
\end{proposition}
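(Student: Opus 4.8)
The plan is to realize $G$ itself, equipped with its left Haar measure $\lambda_G$, as a strict uniform measure equivalence coupling between $H_1$ and $H_2$. Let $H_1\times H_2$ act on $\Omega := G$ by $(h_1,h_2).g := h_1 g h_2^{-1}$. Before anything else I would record the standard fact that a closed cocompact subgroup $H$ of an lcsc group satisfies $\Delta_G|_H = \Delta_H$; applied to the unimodular groups $H_1,H_2$ this gives $\Delta_G|_{H_1} \equiv \Delta_G|_{H_2} \equiv 1$. This has two consequences I will use repeatedly: first, right translation by $H_2$ preserves $\lambda_G$ (it scales $\lambda_G$ by $\Delta_G$), so $\eta := \lambda_G$ is genuinely $H_1\times H_2$-invariant; and second, the compact homogeneous spaces $G/H_2$ and $H_1\backslash G$ carry finite $G$-invariant measures, putting us in the situation of Lemma \ref{lem:group-homomogeneous-space-coarse-measure-structure}.

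Next I would set up the two identifications. Using Remark \ref{rem:borel-selector} and cocompactness, choose bounded Borel choice functions $s_1\colon H_1\backslash G\to G$ and $s_2\colon G/H_2\to G$, with images in compact sets $K_1,K_2\subset G$, and define $i\colon H_1\times (H_1\backslash G)\to G$ by $i(h_1,y):=h_1 s_1(y)$ and $j\colon H_2\times (G/H_2)\to G$ by $j(h_2,x):=s_2(x)h_2^{-1}$. A direct check shows these are Borel isomorphisms that are genuinely $H_1$- and $H_2$-equivariant for the left and right translation actions, so the resulting coupling is strict in the sense of Definition \ref{def:UME}. For the measures, Lemma \ref{lem:group-homomogeneous-space-coarse-measure-structure}, together with unimodularity of $H_2$, shows that the $G$-invariant probability measure $\mu$ on $G/H_2$ satisfies $j_*(\lambda_{H_2}\times\mu)=\lambda_G$. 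On the $H_1$-side the same lemma produces right Haar measure rather than left Haar, and here the main technical point arises: since $G$ need not be unimodular I cannot use the invariant measure directly. The remedy is that $\Delta_G$, being trivial on $H_1$, is constant on each coset $H_1g$ and hence descends to a continuous function on the compact space $H_1\backslash G$; twisting the invariant measure $\rho_{H_1\backslash G}$ by this descended $\Delta_G$ yields a finite measure $\nu$ with $i_*(\lambda_{H_1}\times\nu)=\lambda_G$. Thus both $i$ and $j$ push their respective product measures to the single measure $\eta=\lambda_G$ (after rescaling), giving a strict measure equivalence coupling.

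Finally I would verify uniformity through the criterion of Lemma \ref{lem:cocycles-loc-bd}. A direct computation expresses everything in terms of the choice functions: the associated cocycles are $\omega_{H_2}(h_1,x)=s_2(h_1.x)^{-1}h_1 s_2(x)$ and symmetrically for $\omega_{H_1}$, while $i^{-1}\circ j(\{e_{H_2}\}\times X)$ and $j^{-1}\circ i(\{e_{H_1}\}\times Y)$ have their $H_i$-coordinates of the form $s_2(x)s_1(\cdot)^{-1}$ and $s_1(y)s_2(\cdot)^{-1}$. Since $s_1,s_2$ take values in the compact sets $K_1,K_2$ and these quantities lie in the closed subgroups $H_i$, all of them range over compact sets such as $K_2K_1^{-1}\cap H_1$ or $K_2^{-1}CK_2\cap H_2$ for $C\subset H_1$ compact; together with the fact that $X$ and $Y$ are declared bounded, this yields boundedness of the two sets and local boundedness of both cocycles. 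Lemma \ref{lem:cocycles-loc-bd} then certifies that the coupling is uniform, so $H_1$ and $H_2$ are uniformly measure equivalent. I expect the main obstacle to be precisely the measure-theoretic reconciliation in the second paragraph: ensuring one and the same invariant measure $\eta$ serves both legs of the coupling when $G$ is non-unimodular, which forces the modular twist on the $H_1$-side and relies essentially on $\Delta_G|_{H_i}\equiv 1$.
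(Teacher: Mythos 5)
Your overall architecture is exactly that of the paper's proof: take $\Omega = G$ with the action $(h_1,h_2).g = h_1gh_2^{-1}$, use bounded Borel choice functions together with Lemma \ref{lem:group-homomogeneous-space-coarse-measure-structure} to identify the two legs, and verify uniformity from boundedness of the sections (the paper checks properness of $i^{-1}\circ j'$ directly rather than via Lemma \ref{lem:cocycles-loc-bd}, but that difference is cosmetic). However, the fact you ``record'' at the outset --- that a closed cocompact subgroup $H\leq G$ automatically satisfies $\Delta_G\restriction_H=\Delta_H$ --- is false. Take $G=\mathrm{SL}_2(\RR)$ and $H=AN$ the subgroup of upper triangular matrices with positive diagonal: $H$ is closed and cocompact ($G/H\cong \mathrm{SO}(2)$ by the Iwasawa decomposition), $G$ is unimodular, yet $H$ is isomorphic to the $ax+b$ group and hence non-unimodular, so $\Delta_G\restriction_H=1\neq\Delta_H$; correspondingly $G/H$ carries no invariant measure. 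Since every subsequent step of your argument (invariance of $\lambda_G$ under right translation by $H_2$, existence of the finite invariant measures on $G/H_2$ and $H_1\backslash G$, the descent of $\Delta_G$ to $H_1\backslash G$) rests on $\Delta_G\restriction_{H_i}\equiv 1$, this is a genuine gap as written.

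The gap is repairable, but you must use the unimodularity of $H_1$ and $H_2$, not merely their cocompactness; this is precisely what the paper extracts from \cite[Corollary B.1.8]{BHV}. Moreover, once one knows $\Delta_G\restriction_{H_i}=\Delta_{H_i}=1$, a short argument (for instance, bounding the Radon--Nikodym cocycle of a quasi-invariant probability measure on the compact space $G/H_1$ by a rho-function computation) shows that $\Delta_G$ is bounded below on all of $G$ and hence identically $1$, i.e.\ $G$ itself is unimodular. Consequently the ``modular twist'' that you single out as the main technical point is vacuous: the function by which you twist $\rho_{H_1\backslash G}$ is identically $1$. The twist computation is not incorrect, but it guards against a situation that cannot occur under the hypotheses; the honest technical content of the proposition lies in the step you dismissed as standard.
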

\begin{proof}
  By Corollary B.1.8 of \cite{BHV}, the group $G$ is unimodular and $H_1, H_2 \leq G$ have finite covolume; i.e.~there is a finite left-invariant measure $\lambda_{G_2/H}$ on $G / H_2$ and a finite right-invariant measure $\rho_{H_1 \backslash G}$ on $H_1 \backslash G$.
  After a choice of bounded Borel sections $s_1\colon   H_1 \backslash G \rightarrow G$ and $s_2\colon   G/H_2 \rightarrow G$, Lemma \ref{lem:group-homomogeneous-space-coarse-measure-structure} says that the map $i\colon   H_1 \times H_1 \backslash G \rightarrow G$ defined by $i(h, x) = h s_1(x)$ is a Borel isomorphism
  such that $i_*(\rho_{H_1} \times \rho_{H_1 \backslash G}) = \rho_G$ for a right Haar measure $\rho_G$.  Similarly, the map $j\colon   G/H_2 \times H_2 \rightarrow G$ defined by  $j(x,h) = s_2(x)h$ is a Borel isomorphism which satisfies $j_*(\lambda_{G/H_2} \times \lambda_{H_2}) = \lambda_G$ for a left Haar measure $\lambda_G$.  Since $H_1,H_2$ and $G$ are unimodular their left and right Haar measure agree, so by fixing the (left) Haar measure $\lambda_G$ on $G$ and rescaling the measure $\rho_{H_1\backslash G}$ suitably, we obtain that $(G,\lambda_G)$ is a strict measure equivalence coupling between $H_1$ and $H_2$ when endowed with the action $(h_1,h_2).g:=h_1gh_2^{-1}$ and maps
  \begin{align*}
  i&\colon H_1\times H_1 \backslash G \To G, \ (h,y)\mapsto i(h,y),\\
  j'&\colon H_2 \times G/H_2 \To G, \  (h,x)\mapsto j(x,h^{-1}).
  \end{align*}
We are therefore done if we can show that $i^{-1}\circ j'$ and $j'^{-1}\circ i$ are proper maps. To this end, fix a compact subset $D\subset H_2$ and note that since $s_2$ is bounded there exists a compact set $K\subset G$ such that
\[
j'(D\times G/H_2 ):=s_2(G/H_2).D^{-1}\subset K.
\]
Now, 
\[
i^{-1}(K)=\{(h,y)\in H_1\times H_1 \backslash G\mid hs_1(y)\in K \} \subset \{(h,y)\in H_1\times H_1\backslash G\mid h\in Ks_1(H_1\backslash G)^{-1} \}
\]
so that
\[
i^{-1}\circ j'(D\times  G/H_2) \subset (H_1 \cap Ks_1(H_1\backslash G)^{-1})\times Y,
\]
and since $s_1$ is bounded this shows that $i^{-1}\circ j'$ is proper. Properness of $j'^{-1}\circ i$ follows by a similar argument. 
\end{proof}

For finitely generated groups, it was shown in \cite{sauer-thesis} that {uniform measure equivalence} implies quasi-isometry,  and the following proposition shows that this result extends to {lcsc groups}. The proof {follows that in} \cite{sauer-thesis}, with a few additional technicalities stemming from the more general topological setting. They key to the result is the following lemma. 

\begin{lemma}\label{lem:set-coupling-to-qi}
Let $G$ and $H$ be two lcsc groups. If $G \times H$ acts on a set $\Omega$ and there exists a subset {$Z \subset \Omega$} for which 
\begin{itemize}
\item[(i)] $G.Z=\Omega=H.Z$
\item[(ii)] $\lbrace g \in G \colon g.Z \cap Z \neq \emptyset \rbrace$ and $\lbrace h \in H \colon h.Z \cap Z \neq \emptyset \rbrace$ are precompact;
\item[(iii)] for every compact subset {$K \subset G$} there exists a compact subset {$L_K \subset H$} such that {$K.Z \subset L_K.Z$}; and for every compact subset {$L \subset H$} there exists a compact subset {$K_L \subset G$} such that {$L.Z \subset K_L.Z$},  
\end{itemize}
then $G$ and $H$ are coarsely equivalent.
\end{lemma}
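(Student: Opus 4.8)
The plan is to build a single map $\phi\colon G\to H$ from the coupling and to show that it is a coarse equivalence from $G$, equipped with its right coarse structure, to $H$, equipped with its left coarse structure; since inversion identifies the left and right coarse structures of each group (as recorded among the examples above), this yields coarse equivalence of $G$ and $H$. Fix a basepoint $z_0\in Z$ and write $F_G:=\{g\in G\mid g.Z\cap Z\neq\emptyset\}$ and $F_H:=\{h\in H\mid h.Z\cap Z\neq\emptyset\}$, which are precompact by (ii). Since $\Omega=H.Z$, for each $g\in G$ there is some $h\in H$ with $g.z_0\in h.Z$, and I define $\phi(g)$ to be such an element. If $h$ and $h'$ both satisfy $g.z_0\in h.Z\cap h'.Z$, say $g.z_0=h.a=h'.a'$ with $a,a'\in Z$, then $h'^{-1}h.a=a'\in Z$ shows $h'^{-1}h\in F_H$, so $\phi$ is determined up to the precompact set $F_H$, which is harmless for coarse considerations.

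Throughout I use repeatedly that the $G$- and $H$-actions commute. Writing $g.z_0=\phi(g).a_g$ with $a_g\in Z$, the core step is to prove that $\phi$ is a coarse embedding in the sense that, for $g_1,g_2\in G$ and $h_i:=\phi(g_i)$, the element $g_2g_1^{-1}$ is precompact in $G$ if and only if $h_1^{-1}h_2$ is precompact in $H$. For the ``only if'' direction, suppose $g_2g_1^{-1}=k$ with $k$ in a compact set $K\subset G$. Using commutativity and condition (iii) to replace the $G$-translate $k.a_{g_1}$ by an $H$-translate, one computes
\begin{align*}
h_2.a_{g_2}=g_2.z_0=k.(g_1.z_0)=k.(h_1.a_{g_1})=h_1.(k.a_{g_1})=h_1\ell.c,
\end{align*}
where $k.a_{g_1}=\ell.c$ with $\ell$ in the compact set $L_K\subset H$ furnished by (iii) and $c\in Z$. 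Hence $(\ell^{-1}h_1^{-1}h_2).a_{g_2}=c\in Z$, so $\ell^{-1}h_1^{-1}h_2\in F_H$ and therefore $h_1^{-1}h_2\in L_K\overline{F_H}$ is precompact. The converse implication is proved symmetrically, now using the second half of (iii) to turn an $H$-translate of $Z$ into a $G$-translate and reading off an element of $F_G$; this is precisely where both halves of condition (iii) are used.

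It remains to show that the image of $\phi$ is cobounded in the left coarse structure of $H$, after which Remark \ref{rem:characterisation-coarse-equivalence} completes the argument. Here the key observation is that for a given $h\in H$ one can arrange the relevant point of $Z$ to be $z_0$ itself: since $h^{-1}.z_0\in\Omega=G.Z$ we may write $h^{-1}.z_0=g_0.z$ with $g_0\in G$ and $z\in Z$, and then commutativity gives $g_0^{-1}.z_0=h.z\in h.Z$. Thus $\gamma:=g_0^{-1}$ satisfies $\gamma.z_0\in h.Z$, while $\gamma.z_0\in\phi(\gamma).Z$ by definition of $\phi$; hence $\phi(\gamma).Z\cap h.Z\neq\emptyset$, which forces $h^{-1}\phi(\gamma)\in F_H$. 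So every $h\in H$ lies within the precompact set $F_H$ of the point $\phi(\gamma)$ of $\phi(G)$, establishing coboundedness.

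The main obstacle is the recurring difficulty that the hypotheses only control a given element of $G$ or $H$ through its effect on a single point of $Z$, whereas the natural computations produce its effect on another, a priori unrelated, point of $Z$; with no metric on $\Omega$ there is nothing to bridge the two. In the coarse-embedding step this gap is closed by condition (iii), which compares compact families of $G$- and $H$-translates of all of $Z$ uniformly, and in the coboundedness step it is closed by the freedom to choose $\gamma$ so that the point in question is exactly $\gamma.z_0$. Verifying that $\phi$ is genuinely bornologous and expansive, rather than merely along a single orbit, amounts to checking that these two devices suffice, and this is the heart of the proof.
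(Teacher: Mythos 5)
Your argument is correct and follows essentially the same route as the paper's proof: the same choice function $g\mapsto\phi(g)$ determined by $g.z_0\in\phi(g).Z$ (the paper uses $g^{-1}.z_0$ instead, which lets it work with left-invariant proper metrics throughout rather than passing through the right coarse structure and inversion), the same coboundedness argument from (i) and (ii), and the same use of (ii) and (iii) to compare $g_2g_1^{-1}$ with $\phi(g_1)^{-1}\phi(g_2)$. The one place where you are too quick is the converse half of the coarse-embedding step: it is not \emph{literally} symmetric, since the defining relation $g.z_0=\phi(g).a_g$ has no counterpart of the form $h.z_0\in\psi(h).Z$, and a naive mirror image of your forward computation produces two points of $h_1.Z$ rather than two points of $Z$. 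What rescues it is the identity
\[
(h_1^{-1}h_2).a_{g_2}\;=\;h_1^{-1}.(g_2.z_0)\;=\;(g_2g_1^{-1}).\bigl(h_1^{-1}g_1.z_0\bigr)\;=\;(g_2g_1^{-1}).a_{g_1},
\]
valid by commutativity of the actions and the relations $g_i.z_0=h_i.a_{g_i}$: if $h_1^{-1}h_2\in L$ then this common point lies in $L.Z\subset K_L.Z$, so writing it as $k.c$ with $k\in K_L$, $c\in Z$ yields $(k^{-1}g_2g_1^{-1}).a_{g_1}=c\in Z$, hence $k^{-1}g_2g_1^{-1}\in F_G$ and $g_2g_1^{-1}\in K_L F_G$, which is precompact. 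With that line inserted the proof is complete, and its controlled-set formulation is arguably cleaner than the paper's metric version with the auxiliary increasing families $L(r)$, $D(r)$ and the sequence arguments.
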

\begin{proof}
Fix a compatible, proper, left-invariant metrics $d_G$ and $d_H$ on $G$ and $H$, respectively and, for $E\subset H$ denote  $\sup \lbrace d_H(e_H,a) \colon a \in E \rbrace$ by $\ell_H(E)$ and similarly for $G$. 
Fix $z_0 \in Z$ and write $Z_H := \lbrace h \in H \colon h.Z \cap Z \neq \emptyset\rbrace$, noting that $K_H$ is bounded with respect to $d_H$ so that  $b := \ell_H(Z_H) < \infty$. We now proceed with the actual proof.
By (iii) and the axiom of choice there exists a function $f \colon G \rightarrow H$ for which $g^{-1}.z_0 \in  f(g).Z$. We claim that $f \colon (G,d_G) \rightarrow (H,d_H)$ is  a coarse equivalence. We first prove that $f(G)$ is cobounded. Towards this, fix $h \in H$. Then $h^{-1}.z_0 \in g.Z$ for some $g \in G$ by (i) and since the actions commute $g^{-1}.z_0 \in h.Z$. Since also $g^{-1}.z_0 \in f(g).Z$ it follows that $h^{-1}f(g) \in Z_H$ and hence $d_H(h,f(g)) = d_H(e_H,h^{-1}f(g)) \leq b$ from which it follows that $f$ has cobounded image. By properness of $d_G$ and (iii), for each $r\geq 0$ let {$L(r) \subset H$} denote a compact set for which {$\overline{B}(e_G,r).Z \subset L(r).Z$}. Clearly this can be done so that {$L(r_1) \subset L(r_2)$} when $r_1 \leq r_2$. We now prove that $f$ is a coarse embedding. Towards this, let $g,g' \in G$ and put $r :=d_G(g,g')$. 
Since ${g'}^{-1}.z_0 \in f(g').Z$ and the actions commute ${g'}^{-1}f(g)^{-1}.z_0 \in f(g)^{-1}f(g').Z$; on the other hand
\begin{align}
{g'}^{-1}f(g)^{-1}.z_0 &= {g'}^{-1}(gg^{-1})f(g)^{-1}.z_0 = {g'}^{-1}g f(g)^{-1}g^{-1}.z_0 \nonumber \\ &\in ({g'}^{-1}g)f(g)^{-1}f(g).Z = ({g'}^{-1}g).Z \nonumber \\ &{\subset} \overline{B}(e_G,r).Z \subseteq L(r).Z \nonumber
\end{align}
so  $(f(g)^{-1}f(g').Z) \cap L(r).Z \neq \emptyset$ and hence  $f(g')^{-1}f(g)h \in Z_H$ for for some $h \in L(r)$. Since $Z_H = Z_H^{-1}$ it follows that $f(g) ^{-1}f(g') \in L(r).Z_H$ and by the triangle inequality and left-invariance of $d_H$
\begin{align}
d_H(f(g),f(g')) &= d_H(e_H, f(g)^{-1}f(g')) \leq \ell_H(L(r).Z_H) \nonumber \leq \ell_H(L(r)) + b. \nonumber 
\end{align}
To see that $f$ is a coarse embedding we need to first show that if $(g_n)$ and $(g_n')$ are sequences in $G$ and $\lim_{n \rightarrow \infty}d_H(f(g_n),f(g_n')) = \infty$ then $\lim_{n \rightarrow \infty}d_G(g_n,g_n')= \infty$. So suppose $\lim_{n \rightarrow \infty}d_G(f(g_n),f(g_n'))= \infty$. Then by the above we have that $\lim_{n \rightarrow \infty}\ell_H(L(d_G(g_n,g_n'))) = \infty$.
Towards a contradiction, suppose that $ d_G(g_n, g_n')$ does not diverge towards infinity, and hence that there exists $M>0$ and a subsequences $(g_{n_k}), (g_{n_k}')$ such that $d_G(g_{n_k},g_{n_k}') \leq M$. Then, since $L$ is increasing, 
we have {$L(d_G(g_{n_k},g_{n_k}')) \subset L(M)$}   for all $k \in \mathbb{N}$ and hence that $\sup_{k} \ell_H(L(d_G(g_{n_k},g_{n_k}')))<\infty$, but this contradicts the fact $\lim_{n \rightarrow \infty}\ell_H(L(d_G(g_n,g_n'))) = \infty$.
  Thus $\lim_{n \rightarrow \infty}d_G(g_n,g_n') = \infty$. It remains to prove that if $\lim_{n \rightarrow \infty}d_G(g_n,g_n')= \infty$ then $\lim_{n \rightarrow \infty}d_H(f(g_n),f(g_n')) = \infty$.  By properness of $d_H$ and (iii) let {$D(r) \subset G$} denote a compact set for which {$\overline{B}(e_H,r).Z \subset D(r).Z$} and {$D(r_1) \subset D(r_2)$} whenever $r_1 \leq r_2$. Consider now $f(g),f(g') \in H$ and let put $r:=d_H(f(g),f(g'))=r$ and note, as above, that ${g'}^{-1}. f(g)^{-1}.z_0 \in f(g)^{-1}f(g').Z$ so ${g'}^{-1}. f(g)^{-1}.z_0 \in D(r).Z$. On the other hand, ${g'}^{-1}. f(g)^{-1}.z_0 = {g'}^{-1}.(gg^{-1})f(g)^{-1}.z_0 = {g'}^{-1}gf(g)^{-1}g^{-1}.z_{0} \in {g'}^{-1}g.Z$ so ${g'}^{-1}g.Z \cap D(r).Z \neq \emptyset$ and hence ${g'}^{-1}g \in D(r).Z$. So as previously $$d_G(g,g') \leq \ell_G(D(r)) + b'$$   
where $b'=\ell_G(Z_G) < \infty$ and now a similar argument as before shows that if $\lim_{n \rightarrow \infty}d_G(g_n,g_n')= \infty$ then $\lim_{n \rightarrow \infty}d_H(f(g_n),f(g_n')) = \infty$.
\end{proof}


\begin{prop}\label{prop:UME-implies-QI}
If $G$ and $H$ are uniformly measure equivalent, unimodular, lcsc groups then $G$ and $H$ are coarsely equivalent.
\end{prop}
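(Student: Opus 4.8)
The plan is to reduce the statement to the purely coarse-geometric criterion of Lemma \ref{lem:set-coupling-to-qi} by exhibiting a suitable set $Z$ inside a strict uniform coupling. Since $G$ and $H$ are uniformly measure equivalent, Definition \ref{def:UME} furnishes a strict uniform measure equivalence coupling $(\Omega, \eta, X, \mu, Y, \nu, i, j)$; here $i\colon G\times Y\to\Omega$ and $j\colon H\times X\to\Omega$ are genuine, globally equivariant Borel isomorphisms, and the associated cocycles $\omega_G\colon H\times Y\to G$ and $\omega_H\colon G\times X\to H$ are strict. Write $Y_\Omega:=i(\{e_G\}\times Y)$ and $X_\Omega:=j(\{e_H\}\times X)$ for the two genuine fundamental domains, of the $G$- and $H$-action respectively, and set $Z:=Y_\Omega\cup X_\Omega$. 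I will verify that $G\times H\curvearrowright\Omega$ together with $Z$ satisfies hypotheses (i)--(iii) of Lemma \ref{lem:set-coupling-to-qi}, which then yields coarse equivalence of $G$ and $H$. Note that the lemma is purely set-theoretic, so no measurability of $Z$ is needed.

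Condition (i) is immediate and is the reason for taking the union: since $Y_\Omega$ is a fundamental domain for the $G$-action we have $G.Z\supset G.Y_\Omega=\Omega$, and since $X_\Omega$ is a fundamental domain for the $H$-action we have $H.Z\supset H.X_\Omega=\Omega$. The remaining verifications rest on the two consequences of uniformity recorded in Lemma \ref{lem:cocycles-loc-bd}: the sets $i^{-1}\circ j(\{e_H\}\times X)$ and $j^{-1}\circ i(\{e_G\}\times Y)$ are bounded --- so that, writing $j(e_H,x)=i(\lambda(x),\tau(x))$ and $i(e_G,y)=j(\kappa(y),\sigma(y))$, the images $\lambda(X)\subset G$ and $\kappa(Y)\subset H$ are precompact --- and the cocycles $\omega_G,\omega_H$ are locally bounded.

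For condition (ii) I would read everything in the coordinates provided by $i$, in which $Y_\Omega=\{e_G\}\times Y$, $g.Y_\Omega=\{g\}\times Y$, and $X_\Omega=\{(\lambda(x),\tau(x)):x\in X\}\subset\lambda(X)\times Y$. Expanding $g.Z\cap Z$ into its four pieces and using disjointness of the $G$-translates of $Y_\Omega$ shows that $g.Z\cap Z\neq\emptyset$ forces $g$ to lie in $\{e_G\}\cup\lambda(X)\cup\lambda(X)^{-1}\cup\lambda(X)\lambda(X)^{-1}$, a precompact set; the corresponding statement for $H$ follows symmetrically in the $j$-coordinates using $\kappa(Y)$. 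For condition (iii), the point is that every $t\in\Omega$ is of the form $h_t.z$ with $z=j(e_H,x_t)\in X_\Omega\subset Z$, where $h_t$ is the $H$-coordinate of $j^{-1}(t)$; thus it suffices to show that $\{h_t:t\in K.Z\}$ is precompact for each compact $K\subset G$. Splitting $K.Z=K.X_\Omega\cup K.Y_\Omega$ and computing $h_t$ in $j$-coordinates, one finds $h_t=\omega_H(k,x)^{-1}$ on the first piece and $h_t=\kappa(y)\,\omega_H(k,\sigma(y))^{-1}$ on the second; local boundedness of $\omega_H$ on the compact set $K$ together with precompactness of $\kappa(Y)$ bounds both, giving a compact $L_K\subset H$ with $K.Z\subset L_K.Z$. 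The reverse inclusion, for compact $L\subset H$, is obtained symmetrically using the $G$-fundamental domain $Y_\Omega\subset Z$, local boundedness of $\omega_G$, and precompactness of $\lambda(X)$.

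With (i)--(iii) in hand, Lemma \ref{lem:set-coupling-to-qi} applies and shows that $G$ and $H$ are coarsely equivalent. I expect the only genuinely delicate point to be the bookkeeping in (ii) and (iii): one must consistently translate the two fundamental domains between the $i$- and $j$-coordinate systems and keep track of which cocycle, and which of $\lambda(X)$ or $\kappa(Y)$, controls each term. The conceptual heart, by contrast, is the observation that taking $Z$ to be the union of the two fundamental domains makes the double covering condition (i) hold on the nose, circumventing the usual difficulty that a measure-theoretic fundamental domain for one action need not cover $\Omega$ under the other.
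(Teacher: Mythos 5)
Your proposal is correct and follows essentially the same route as the paper: the same set $Z = i(\{e_G\}\times Y)\cup j(\{e_H\}\times X)$, the same reduction to Lemma \ref{lem:set-coupling-to-qi}, and the same inputs from Lemma \ref{lem:cocycles-loc-bd} (boundedness of the transported fundamental domains and local boundedness of the cocycles). The only difference is cosmetic bookkeeping: you verify (ii) and (iii) by explicit coordinate computations, whereas the paper argues slightly more compactly by enclosing $Z$ in a single set of the form $j(D\times X)$ and invoking properness of $j^{-1}\circ i$ directly.
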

\begin{proof}
Let $(\Omega, \eta, X,\mu, Y,\nu, i,j)$ be a strict, uniform, measure equivalence coupling and
define {$Z := i(\{e_G\}\times Y )\cup j(\{e_H\}\times X)\subset \Omega$}. We now consider $\Omega$ simply as a set with commuting, set-theoretical actions of $G$ and $H$ and aim at proving that $Z$ satisfies the assumptions (i)-(iii) in  Lemma \ref{lem:set-coupling-to-qi}.  Condition (i) is trivially satisfied. To verify (ii), we need to prove that $\{h\in H\mid h.Z\cap Z\neq \emptyset\}$ is precompact in $H$ (the corresponding statement for $G$ then follows by symmetry). Since $j^{-1}\circ i$ is proper, there exists a compact set $D\subset H$ such that $Z\subset j(D\times X)$ and we therefore have
\[
\{h\in H\mid h.Z\cap Z\neq \emptyset\}\subset \{h\in H\mid h.D\cap D\neq \emptyset \},
\]
and the latter is compact since $H$ acts properly on itself. Lastly we need to see that {(iii)} is satisfied, which will follow from the cocycles being locally bounded; more precisely, if $C\subset G$ is compact then, by Lemma \ref{lem:cocycles-loc-bd}, the set $D:=\{\omega_H(g,x)\mid g\in C, x\in X\}$ is precompact and we have
\[
C.j(\{e_H\}\times X)\subset j(D \times X)=D .j(\{e_H\}\times X).
\]
Moreover, we have $C.i(\{e_G\}\times Y)=i(C\times Y)\subset j(D'\times X)$ for some compact set $D'\subset H$ and hence the closure, $D''$, of $D \cup D'$ satisfies
\[
C.Z\subset j(D''\times X)=D''.j(\{e\}\times X)\subset D''.Z. \qedhere
\]

\end{proof}

 For discrete groups, Gromov's dynamic criterion for quasi-isometry \cite[0.2.C$_2'$]{gromov-asymptotic-invariants} (see also \cite{shalom-harmonic-analysis, sauer-thesis}) plays a prominent role, as it allows one to treat quasi-isometry within a purely topological framework, and in \cite{bader-rosendal} Bader and Rosendal generalized this to the locally compact setting by proving the following result.
\begin{thm}[{\cite[Theorem 1]{bader-rosendal}}]
Two lcsc groups, $G$ and $H$, are coarsely equivalent if and only if there exists a locally compact Hausdorff space $\Omega$ with commuting, continuous,  proper, cocompact actions of $G$ and H.
\end{thm}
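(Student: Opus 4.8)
The plan is to prove the two implications separately: the backward direction is essentially the content of Lemma~\ref{lem:set-coupling-to-qi}, while the forward direction requires constructing a topological coupling from scratch and is where the real work lies.

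For the backward implication, suppose $\Omega$ is a locally compact Hausdorff space carrying commuting, continuous, proper, cocompact actions of $G$ and $H$. By cocompactness there are compact sets $Q_G, Q_H \subseteq \Omega$ with $G.Q_G = \Omega = H.Q_H$, and I would set $Z := Q_G \cup Q_H$, which is compact and satisfies $G.Z = \Omega = H.Z$, giving condition~(i) of Lemma~\ref{lem:set-coupling-to-qi}. Condition~(ii) is immediate from properness: for a proper action and a compact set $Z$, the return set $\{g \in G : g.Z \cap Z \neq \emptyset\}$ is the image under coordinate projection of the compact preimage of $Z \times Z$ under the map $(g,\omega) \mapsto (g.\omega, \omega)$, hence precompact, and similarly for $H$. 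For condition~(iii), given a compact $K \subseteq G$ the set $K.Z$ is compact, and properness of the $H$-action shows that $L_K := \overline{\{h \in H : h.Z \cap K.Z \neq \emptyset\}}$ is compact; since $H.Z = \Omega \supseteq K.Z$, every point of $K.Z$ lies in some $h.Z$ with $h \in L_K$, whence $K.Z \subseteq L_K.Z$, and the symmetric statement follows the same way. Lemma~\ref{lem:set-coupling-to-qi} then yields that $G$ and $H$ are coarsely equivalent.

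For the forward implication I would fix, by \cite{Struble}, proper, compatible, left-invariant metrics $d_G$ and $d_H$, and a coarse equivalence $f \colon G \to H$ together with control data: non-decreasing functions $\rho_\pm \colon [0,\infty) \to [0,\infty)$ with $\rho_-(t) \to \infty$ and a constant $C \geq 0$ such that $\rho_-(d_G(x,y)) \leq d_H(f(x),f(y)) \leq \rho_+(d_G(x,y))$ for all $x,y \in G$ and such that $f(G)$ is $C$-cobounded. The coupling is then built inside the Chabauty--Fell hyperspace of closed subsets of the locally compact space $G \times H$: I would let $\Omega$ be the set of closed subsets $R \subseteq G \times H$ that are \emph{uniformly controlled coarse graphs}, meaning that $\pi_G(R)$ and $\pi_H(R)$ are $C$-cobounded and any two points $(x,h),(x',h') \in R$ satisfy $\rho_-(d_G(x,x')) - C \leq d_H(h,h') \leq \rho_+(d_G(x,x')) + C$. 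The group $G$ acts by translating the first coordinate and $H$ by translating the second; these actions visibly commute, are continuous for the Chabauty--Fell topology, and preserve the defining uniform bounds, so $\Omega$ is invariant, and the graph $R_f = \{(x,f(x)) : x \in G\}$ lies in $\Omega$ after enlarging the constants, so $\Omega \neq \emptyset$.

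The properties of these actions then have to be checked. Properness of the $G$-action amounts to the statement that translating the first coordinate far out drives a controlled graph off every compact subset of $G \times H$: since $f$, and hence every $R \in \Omega$, is metrically proper, for $R$ meeting a fixed compact box the set of $g \in G$ with $g.R$ still meeting that box is precompact, and the same argument handles $H$. Cocompactness is an Arzel\`a--Ascoli-type compactness statement: after normalizing a graph by a first-coordinate translation so that it meets a fixed compact neighbourhood of the identity, the uniform bounds $\rho_\pm$ together with the coboundedness constant $C$ confine the remaining data to a compact set, so that $G \backslash \Omega$, and symmetrically $H \backslash \Omega$, is compact. I expect the main obstacle to be precisely the topological bookkeeping in this last step: arranging the defining constants so that $\Omega$ is simultaneously locally closed in the compact hyperspace, and hence locally compact Hausdorff, \emph{and} large enough that both quotients are compact, while keeping both actions proper. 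Once $\Omega$ is shown to be a locally compact Hausdorff space with commuting, continuous, proper, cocompact $G$- and $H$-actions, the forward implication is complete.
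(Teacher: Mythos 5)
The paper does not actually prove this statement; it imports it verbatim from Bader--Rosendal, and the only glimpse of their construction appears in the proof of Lemma \ref{lem:sc-coupling}: the coupling is realized as a locally compact closed subspace of $C(H,X)$ with the topology of pointwise convergence in $1$-norm, where $X$ is a $G$-invariant subset of $L^1(G,\lambda_G)$. Your route is therefore genuinely different. The backward implication via Lemma \ref{lem:set-coupling-to-qi} is correct and is the exact topological analogue of how the paper uses that lemma in Proposition \ref{prop:UME-implies-QI}; your verification of conditions (i)--(iii) from continuity, properness and cocompactness is fine. For the forward implication you build the coupling in the Chabauty--Fell hyperspace of controlled closed relations in $G\times H$, which is the direct generalization of Gromov's dynamic criterion for discrete groups. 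Bader--Rosendal pass to $L^1(G)$ precisely to sidestep the fact that a coarse equivalence between lcsc groups need not be continuous, so that spaces of maps with pointwise topologies are delicate; your hyperspace of closed \emph{relations} sidesteps the same issue differently, and the payoff is a more elementary and more symmetric construction. The cost is that local compactness of $\Omega$ must be verified by hand; this works because the only Fell-limit of controlled, cobounded closed relations that can violate the defining conditions is the empty set, so $\overline{\Omega}\subset\Omega\cup\{\emptyset\}$ and $\Omega$ is open in its compact Hausdorff closure.

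Three points need repair, all local. First, the graph $R_f=\{(x,f(x)) : x\in G\}$ need not be closed, since $f$ need not be continuous; you must place its \emph{closure} in $\Omega$, and for the closure to inherit the two-point control condition you should replace $\rho_\pm$ by continuous (or suitably semicontinuous) minorant and majorant rather than merely ``enlarging the constants''. Second, in the cocompactness step, ``normalizing by a first-coordinate translation so that the graph meets a fixed compact neighbourhood of the identity'' cannot be taken literally: a $G$-translation leaves the $H$-coordinate untouched, so you must anchor via the second coordinate. Use the $C$-coboundedness of $\pi_H(R)$ to find $(x_0,h_0)\in R$ with $d_H(h_0,e_H)\leq C$ and translate by $x_0^{-1}$; the set $\{R\in\Omega : R\cap(\{e_G\}\times \overline{B}_H(e_H,C))\neq\emptyset\}$ is closed in $\overline{\Omega}$ and omits $\emptyset$, hence is a compact set meeting every $G$-orbit. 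Third, the properness argument should be run against compact subsets of $\Omega$, which are exactly the closed families all of whose members meet one fixed compact box in $G\times H$; with that description in hand, two elements of such a family related by $g$ produce two points of a single relation with bounded $H$-distance, and unboundedness of $\rho_-$ confines $g$ to a compact set. None of these affects the underlying idea; the construction does go through.
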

Recall that an action $G\curvearrowright \Omega$ on a locally compact Hausdorff space $\Omega$ is said to be \emph{continuous} if the action map $G\times \Omega \to \Omega$ is continuous, \emph{cocompact} if it is continuous and there exists a compact subset {$C_0\subset \Omega$} such that $G.C_0=\Omega$ and \emph{proper} if the action map $G\times \Omega \to \Omega \times \Omega$, $(g,t)\mapsto (g.t,t)$, is proper; i.e.~the inverse image of any compact set is compact.
A locally compact Hausdorff space $\Omega$ with commuting, continuous, proper, cocompact actions of $G$ and $H$ is called a \emph{topological coupling between $G$ and $H$}. \\

The main result in this section is the following theorem showing that, in analogy with the discrete case \cite[Theorem 2.38]{sauer-thesis}, uniform measure equivalence agrees with coarse equivalence on the class of amenable unimodular lcsc groups.

\begin{thm}
  \label{thm:uniform-me-amenable-groups}
  Let $G$ and $H$ be amenable,  unimodular, lcsc groups.  Then $G$ and $H$ are coarsely equivalent if and only if they are uniformly measure equivalent.
\end{thm}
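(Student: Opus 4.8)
The plan is to prove the two implications separately; one is immediate from earlier results and the other carries all the content.

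For the direction \emph{uniformly measure equivalent $\Rightarrow$ coarsely equivalent} there is nothing new to do: Proposition \ref{prop:UME-implies-QI} already establishes this for arbitrary unimodular lcsc groups, so in particular under the standing amenability hypothesis. Thus the heart of the theorem is the converse, and here I would start from the theorem of Bader and Rosendal \cite{bader-rosendal} quoted above, which turns a coarse equivalence into a \emph{topological coupling}: a locally compact Hausdorff space $\Omega$ carrying commuting, continuous, proper and cocompact actions of $G$ and $H$. The first task is to arrange that both actions are free (this is the main obstacle, discussed at the end). Granting freeness, the quotient $Y:=G\backslash\Omega$ is compact Hausdorff, the $H$-action descends to it continuously, and since $H$ is amenable there is an $H$-invariant Borel probability measure $\nu$ on $Y$; symmetrically one has the compact $G$-space $X:=H\backslash\Omega$.

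The key step is to promote $\nu$ to a nonzero $G\times H$-invariant Radon measure $\eta$ on $\Omega$. Choosing a Bruhat function $c\in C_{\mathrm c}(\Omega)$ for the proper cocompact $G$-action, so that $\int_G c(g^{-1}t)\,\mathrm d\lambda_G(g)=1$ for all $t\in\Omega$, and setting $(Tf)(t):=\int_G f(g^{-1}t)\,\mathrm d\lambda_G(g)$ for $f\in C_{\mathrm c}(\Omega)$, the assignment $f\mapsto \nu(Tf)$ defines a $G$-invariant Radon measure $\eta$; since $G$ and $H$ commute, $T$ is $H$-equivariant, so $\eta$ is moreover $H$-invariant precisely because $\nu$ is. Using a bounded Borel choice function $s_Y\colon Y\to\Omega$ (Remark \ref{rem:borel-selector}) and a Weil-type integration formula (cf.\ Lemma \ref{lem:group-homomogeneous-space-coarse-measure-structure}), freeness shows that $i\colon G\times Y\to\Omega$, $i(g,y)=g.s_Y(y)$, is a Borel isomorphism with $i_*(\lambda_G\times\nu)=\eta$. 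As $\eta$ is also $H$-invariant, disintegrating it along the free $H$-action yields symmetrically a finite $G$-invariant measure $\mu$ on the compact space $X$ and a Borel isomorphism $j\colon H\times X\to\Omega$, $j(h,x)=h.s_X(x)$, with $j_*(\lambda_H\times\mu)=\eta$. Since $\mu$ and $\nu$ are finite and $\eta$ is nonzero and $\sigma$-finite (as $\Omega$ is $\sigma$-compact and Polish, hence standard Borel), the tuple $(\Omega,\eta,X,\mu,Y,\nu,i,j)$ is a strict measure equivalence coupling in the sense of Definition \ref{def:me}. Uniformity is then checked via Lemma \ref{lem:cocycles-loc-bd}: since $s_X$ and $s_Y$ take values in compact sets, the sets $i^{-1}\circ j(\{e_H\}\times X)$ and $j^{-1}\circ i(\{e_G\}\times Y)$ lie in preimages of compacta and are bounded by properness, while for compact $C\subset G$ the value $\omega_H(g,x)$ is the $H$-element carrying $s_X(g.x)$ to $g.s_X(x)$, both of which range over compact subsets of $\Omega$ as $g$ ranges over $C$; properness of the $H$-action then forces $\omega_H(C\times X)$ to be precompact, and symmetrically for $\omega_G$. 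Hence the cocycles are locally bounded, the coupling is uniform, and $G$ and $H$ are uniformly measure equivalent in the sense of Definition \ref{def:UME}.

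The hard part is the freeness of the two actions. A proper cocompact action need not be even essentially free — its point stabilisers are compact but may be nontrivial — and, as a product-type construction shows, one cannot in general free such an action up without destroying cocompactness; yet essential freeness is exactly what makes the identifications $\Omega\cong G\times Y$ and $\Omega\cong H\times X$, and hence the isomorphisms $i,j$ of Definition \ref{def:me}, legitimate. I would therefore arrange the Bader--Rosendal coupling to have free $G$- and $H$-actions by exploiting the translation-type nature of its construction, so that the Weil decomposition applies verbatim. The remaining technical points, which I expect to be routine but in need of care, are the measurability of the Bruhat lift $f\mapsto\nu(Tf)$ and the verification that the disintegration of $\eta$ along the free $H$-action genuinely produces the claimed Borel isomorphism $j$ together with a finite $G$-invariant $\mu$ on $X$.
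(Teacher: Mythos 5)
Your overall architecture matches the paper's: one direction is Proposition \ref{prop:UME-implies-QI}, and for the converse you pass through the Bader--Rosendal topological coupling, use amenability of $H$ to produce an invariant probability measure on $\Omega/G$, transport it to a $G\times H$-invariant measure $\eta$ on $\Omega$, and read off a strict coupling from bounded Borel choice functions, checking uniformity via Lemma \ref{lem:cocycles-loc-bd}. Your Bruhat-function averaging is a legitimate variant of the paper's more direct route (the paper simply sets $\eta:=i_*(\lambda_G\times\nu)$, verifies $G\times H$-invariance of $\lambda_G\times\nu$ in coordinates, and then recovers $\mu$ on $X$ by observing that $B\mapsto \rho(\,\cdot\,\times B)$ is a left-invariant measure on $H$, hence a multiple $c_B\lambda_H$, finite on compacta by properness of $j^{-1}\circ i$); either way works once freeness is in hand.

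The genuine gap is the freeness step, which you correctly identify as the crux but then leave unresolved. Your proposal to ``exploit the translation-type nature'' of the Bader--Rosendal construction is not an argument: their coupling is built inside $C(H,X)$ for an invariant subset $X\subset L^1(G)$ and there is no reason its $G$- and $H$-actions are free. Worse, you explicitly dismiss the product construction (``one cannot in general free such an action up without destroying cocompactness''), and that is precisely how the paper closes the gap: by \cite[Proposition 5.3]{adams-stuck} there is a \emph{free} action of $G\times H$ on a \emph{compact} metrizable space, and amplifying $\Omega$ by this space with the diagonal action preserves both properness and cocompactness exactly because the new factor is compact, while freeness of the $G\times H$-action (hence of each of the $G$- and $H$-actions) is inherited from the new factor. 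Without this (or some substitute), the identifications $\Omega\cong G\times Y$ and $\Omega\cong H\times X$ on which your entire construction rests are not available, so as written the proof is incomplete at its load-bearing point.
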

For the proof of Theorem \ref{thm:uniform-me-amenable-groups}, we {need the observation that the topological coupling by} {Bader--Rosendal is second countable.} 
We remark that the only novelty in Lemma \ref{lem:sc-coupling} compared to \cite[Theorem 1]{bader-rosendal} is the fact that the  topological space witnessing the coarse equivalence can be chosen to be second countable.  This technicality, however, will allow us to work exclusively within the class of standard Borel spaces. 

\begin{lem}\label{lem:sc-coupling}
If $G$ and $H$ are coarsely equivalent lcsc  groups then there exists an lcsc Hausdorff space $\Omega$ with commuting, continuous,  proper, cocompact actions of $G$ and $H$.
\end{lem}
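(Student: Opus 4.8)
The plan is to take the topological coupling furnished by the Bader--Rosendal theorem as given and to manufacture a second countable one as the Gelfand spectrum of a carefully chosen separable subalgebra of its function algebra. So I would start with a locally compact Hausdorff space $\Omega$ carrying commuting, continuous, proper, cocompact actions of $G$ and $H$; since $G$ is $\sigma$-compact and $G.C_0=\Omega$ for some compact $C_0$, the space $\Omega$ is itself $\sigma$-compact. The aim is then to produce a $G\times H$-invariant, separable, nondegenerate $C^*$-subalgebra $A\subseteq \mathrm{C}_0(\Omega)$, whose spectrum $\Omega':=\widehat A$ will be the desired second countable coupling.

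To build $A$, I would first write $\Omega=\bigcup_n C_n$ with $C_n$ compact and increasing, and use Urysohn's lemma to choose $f_n\in \mathrm{C}_{\mathrm c}(\Omega)$ with $0\leq f_n\leq 1$ and $f_n\equiv 1$ on $C_n$. These functions have no common zero and, since every $g\in \mathrm{C}_0(\Omega)$ is uniformly small off a compact set, $(f_n)_n$ is an approximate unit of $\mathrm{C}_0(\Omega)$. Next I would fix countable dense subgroups $G_0\leq G$ and $H_0\leq H$ and let $A$ be the $C^*$-algebra generated by the countable set $\{(g_0,h_0).f_n \mid (g_0,h_0)\in G_0\times H_0,\ n\in\NN\}$, where $G\times H$ acts on $\mathrm{C}_0(\Omega)$ by $((g,h).f)(t)=f((g,h)^{-1}.t)$. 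By construction $A$ is separable and $G_0\times H_0$-invariant, and since the action on $\mathrm{C}_0(\Omega)$ is strongly continuous and $A$ is norm-closed, approximating an arbitrary $(g,h)$ by elements of $G_0\times H_0$ upgrades this to full $G\times H$-invariance. As $A$ contains the $f_n$, it has no common zeros and hence acts nondegenerately.

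I would then set $\Omega':=\widehat A$, which is a second countable, locally compact Hausdorff space because $A$ is separable and commutative. The inclusion $A\hookrightarrow \mathrm{C}_0(\Omega)$ is an injective, nondegenerate $*$-homomorphism, so Gelfand duality yields a proper, continuous map $p\colon \Omega\to \Omega'$; injectivity of the inclusion forces $p(\Omega)$ to be dense, while properness forces it to be closed, so $p$ is surjective. The commuting $G$- and $H$-actions restrict from $\mathrm{C}_0(\Omega)$ to $A$ and dualize to commuting actions on $\Omega'$ for which $p$ is equivariant; joint continuity of these actions follows from strong continuity on $A$ together with the estimate $|\chi_k(b_k)-\chi(b)|\leq \|b_k-b\|+|\chi_k(b)-\chi(b)|$ for characters $\chi_k,\chi$. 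Cocompactness is then immediate, since $G.p(C_0)=p(G.C_0)=\Omega'$ with $p(C_0)$ compact.

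The one point requiring genuine checking---and the main obstacle---is that properness of the actions passes to $\Omega'$; all the other properties are soft consequences of $p$ being a proper equivariant surjection. For a compact $L'\subseteq \Omega'$ the preimage $L:=p^{-1}(L')$ is compact by properness of $p$, and equivariance gives the implication $g.L'\cap L'\neq\emptyset \Rightarrow g.L\cap L\neq\emptyset$, whence $\{g\in G \mid g.L'\cap L'\neq\emptyset\}\subseteq \{g\in G \mid g.L\cap L\neq\emptyset\}$, and the latter is precompact because $G$ acts properly on $\Omega$. The same argument applies verbatim to $H$. This shows that $\Omega'$ is a second countable, locally compact Hausdorff space with commuting, continuous, proper, cocompact $G$- and $H$-actions, which completes the proof.
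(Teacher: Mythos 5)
Your argument is correct, but it takes a genuinely different route from the one in the paper. The paper does not modify the Bader--Rosendal coupling at all: it opens up the proof of \cite[Theorem 1]{bader-rosendal}, observes that the coupling there is realized as a locally compact closed subspace of $C(H,X)$ with the topology of pointwise convergence, where $X$ sits inside the separable Banach space $L^1(G,\lambda_G)$, and then invokes Michael's theory of $\aleph_0$-spaces and cosmic spaces \cite{Michael2} to conclude that this particular model is already second countable. You instead treat the Bader--Rosendal theorem as a black box and replace the given coupling $\Omega$ by the spectrum of a separable, $G\times H$-invariant, nondegenerate $C^*$-subalgebra $A\subset \mathrm{C}_0(\Omega)$; the induced proper equivariant surjection $p\colon\Omega\to\widehat{A}$ then transports continuity, cocompactness and --- the only point needing real care, which you handle correctly via $p^{-1}$ of compact sets --- properness down to the second countable quotient. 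Both arguments work. Yours is independent of the internals of the Bader--Rosendal construction and in fact proves the more general statement that \emph{any} locally compact Hausdorff topological coupling between lcsc groups admits an lcsc quotient coupling; the paper's argument avoids the $C^*$-algebraic detour and yields the slightly stronger conclusion that the Bader--Rosendal coupling itself is second countable, with no passage to a quotient. Two points you leave implicit are routine but worth recording: the compact exhaustion $\Omega=\bigcup_n C_n$ should be chosen with $C_n\subset\operatorname{int}(C_{n+1})$ (available since cocompactness of the $G$-action makes $\Omega$ $\sigma$-compact), so that every compact set is absorbed by some $C_n$ and $(f_n)_n$ is indeed an approximate unit; and the norm continuity of $(g,h)\mapsto (g,h).f$ on $\mathrm{C}_0(\Omega)$, which you use both to upgrade $G_0\times H_0$-invariance of $A$ to $G\times H$-invariance and to get joint continuity of the dual action on $\widehat{A}$, requires the standard compactness argument from joint continuity of the action on $\Omega$.
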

{For the proof, recall from \cite{Michael2} that a collection of} {(not necessarily open) subsets $\mathcal{P} \subset  \mathcal{P}(Y)$ of a topological space $Y$ is a \emph{pseudobase} if whenever} {$K \subset  U$ where $U \subset  Y$ is open and $K \subset  Y$ is compact there exists $P \in \mathcal{P}$ for which $K \subset  P \subset  U$.} {The set $\mathcal{P}$ is a \emph{network} for $Y$ if whenever $x \in U$ where $U \subset  Y$ is open there exists $P \in \mathcal{P}$ for} {which $x \in P \subset  U$. A regular Hausdorff space $Y$ is an \emph{$\aleph_0$-space} if it has a countable pseudobase} {and \emph{cosmic} if it has a countable network.}
\begin{proof}
  From the proof of \cite[Theorem 1]{bader-rosendal}, {there is a topological coupling $\Omega$ between $G$ and $H$} {that is a locally compact closed subspace of $C(H,X)$}, where $X$ is a certain {$G$-}invariant subset of the separable Banach space $L^1(G,\lambda_G)$ and $C(H,X)$ is the space of continuous functions from $H$ to $X$ {with} the topology of pointwise convergence in 1-norm.
We now prove that $\Omega$ is second countable with respect to the subspace topology. 
As $H$ and $X$ are second countable, regular, Hausdorff spaces they are $\aleph_0$-spaces and $C(H,X)$, with the topology of pointwise convergence,  is cosmic \cite[Proposition 10.4]{Michael2}. Now $\Omega \subset  C(H,X)$ 
so $\Omega$ is cosmic as well \cite[Condition (E)]{Michael2} (for cosmic spaces, page 994). Since $\Omega$ is locally compact {\cite[Proof of Theorem 1, Claim 4]{bader-rosendal}} and every locally compact cosmic space is separable and metrizable \cite[Condition (C)]{Michael2} (for cosmic spaces, page 994), $\Omega$ is second countable. 
\end{proof}

\begin{proof}[Proof of Theorem \ref{thm:uniform-me-amenable-groups}]
  By Proposition \ref{prop:UME-implies-QI}, we know that $G$ and $H$ are coarsely equivalent if they are uniformly measure equivalent, so we have to show the converse.  If $G$ and $H$ are coarsely equivalent  then, by \cite[Theorem 1]{bader-rosendal} and Lemma \ref{lem:sc-coupling}, they admit an lcsc topological coupling $\Omega$.  There exists is a free action of $G \times H$ on a compact metrizable (and hence second countable) space by \cite[Proposition 5.3]{adams-stuck}, so by amplifying $\Omega$ with such a space we may assume that the $G\times H$-action is free as well as proper.
 We now show that $\Omega$ can be endowed with a measure turning it into a uniform measure equivalence coupling. Let $\pi_X\colon  \Omega \rightarrow \Omega/H =: X$ and $\pi_Y \colon   \Omega \rightarrow  \Omega / G =: Y$ denote the quotient maps.  Since the actions of $G$ and $H$ on $\Omega$ are proper and cocompact, $X$ and $Y$ are compact Hausdorff spaces and since $\Omega$ is separable so are $X$ and $Y$, and hence they are metrizable.  
 By Theorem \ref{thm:borel-section}, there exist bounded Borel sections $s_X\colon X\to \Omega$ and $s_Y\colon Y \to \Omega$ for $\pi_X$ and $\pi_Y$, respectively.  Since $\Omega$ is a free topological coupling, we obtain Borel isomorphisms
\begin{gather*}
  i\colon G \times Y \rightarrow \Omega: \, (g,y) \mapsto g. s_Y(y), \\
  j\colon H \times X \rightarrow \Omega: \, (h,x) \mapsto h. s_X(x)
  \text{.}
\end{gather*}
Since $H$ acts freely on $\Omega$ a map $r_X\colon \Omega \to H$ is defined by the relation $r_X(t)\pi_X(s_X(t))=t$ and similarly we obtain a map $r_Y\colon \Omega \to G$. We have $j^{-1}(t)=(r_H(t), \pi_X(t))$ so that $j^{-1}\circ i (g,y)=(r_H(g.s_Y(y)), \pi_X(g.s_Y(y)))$, and similarly for $i^{-1}\circ j$

We now show that $i^{-1}\circ j$ and $j^{-1}\circ i$ are proper, and by  symmetry of the situation it suffices to treat $i^{-1}\circ j$. So, for a given compact subset $D\subset H$ we need to show that there exists a compact subset $C\subset G$ such that $i^{-1}\circ j(D\times X)\subset C\times Y$.  Since the action $H\curvearrowright \Omega$ is continuous and $s_X(X)$ is precompact there exists a compact subset $K\subset \Omega$ such that $D.s_X(X)\subset K$.  Then
$
i^{-1}(K)=\{(g, y)\in G\times Y\mid {g.s_Y(y)}\in K\},
$
and since the action $G\curvearrowright \Omega $ is proper and $s_Y$ is bounded, the set $\{g\in G \mid g.s_Y(Y)\cap K\neq \emptyset \}$ is precompact, and its closure, $C$, therefore satisfies that
\[
i^{-1}\circ j (D\times X)\subset i^{-1}(K)\subset C\times Y,
\]
as desired.
We now endow $G \times Y$ with an $H$-action, by pulling back the $H$-action from $\Omega$, and similarly, we pull back the $G$-action on $\Omega$ to a $G$-action on $H \times X$.  By $G$- and $H$-equivariance of $i$ and $j$, respectively, we obtain free  Borel actions of $G \times H$ on $G \times Y$ and on $H \times X$ with respect to which $i$ and $j$ are now, by design, $G\times H$-equivariant.  Hence, there is 
 a Borel $H$-action on $Y$ defined by the composition
  \begin{equation*}
    H \times Y
    \xrightarrow{(h,y)\mapsto (h,e_G,y)}
    H \times G \times Y
    \xrightarrow{\id \times i}
    H \times \Omega
    \longrightarrow
    \Omega
    \stackrel{\pi_Y}{\longrightarrow}
    Y,
  \end{equation*}
and a Borel $G$-action on $X$ defined by the composition
  \begin{equation*}
    G \times X
   \xrightarrow{(g,x)\mapsto (g,e_H,x)}
    G \times H \times X
      \xrightarrow{\id \times j}
    G \times \Omega
    \longrightarrow
    \Omega
    \stackrel{\pi_X}{\longrightarrow}
    X.
  \end{equation*}
We next note that the $H$-action on $Y$ and the $G$-action on $X$ are continuous, since they agree with the actions induced by the $G \times H$-action on $\Omega$ via the continuous projections $\pi_Y$ and $\pi_X$, respectively.   Indeed, for $y \in Y$, we have $h.y = \pi_Y(h .i(e, y)) = \pi_Y(h. s_Y(y))$ so that the action agrees with the natural continuous action $H\curvearrowright \Omega/G$. So $H \curvearrowright Y$ is a continuous action by homeomorphisms on a compact metrizable space,  and since $H$ is amenable, there is an $H$-invariant probability measure $\nu$ on $Y$ (cf.~\cite[Chapter 4]{zimmer-book}).  Let $\lambda_G$ be a Haar measure on $G$ and put $\eta := i_*(\lambda_G \times \nu)$.  Note that the action $H\curvearrowright G\times Y$ is given by
\begin{align*}
h.(g,y) &= i^{-1}(h.i(g,y))\\
&=i^{-1}(hg.s_Y(y))\\
&=g.i^{-1}(h.s_Y(y))\\
&=g.(r_Y(h.s_Y(y)),\pi_Y(h.s_Y(y)) )\\
&=(gr_Y(h.s_Y(y)),h.y )
\end{align*}
and since $G$ is unimodular and $\nu$ is $H$-invariant it follows that $\lambda_G\times \nu$ is $G\times H$-invariant.
Since both $i$ and $j$ are $H$-equivariant, the measure $\rho:=(j^{-1})_* \eta= (j^{-1}\circ i)_*(\lambda_G\times \nu)$ on $H \times X$ is therefore invariant for the action of $H$ on the left leg. For each Borel set $B\subset X$ we can define a left-invariant Borel measure $\rho_B$ on $H$ by setting $\rho_B(U):=\rho(U\times B)$ and since $j^{-1}\circ i$ is proper and $\lambda_G\times \nu$ is finite on sets of the form $K\times Y$ with $K\subset G$ compact, we have that $\rho_B$ is finite on compact subsets and hence there exists $c_B\in [0,\infty[$ so that $\rho_B=c_B\lambda_H$. Setting $\mu(B):=c_B$ defines a finite measure on $X$ with the property that $\rho(A\times B)=\lambda_H(A)\mu(B)$ for all Borel sets $A\subset H$ and $B\subset X$, and by uniqueness of the product measure we conclude that $\rho=\lambda_H\times \mu$. Since we already saw that  $j^{-1}\circ i $ and $i^{-1}\circ j$ are proper this shows that $(\Omega,\eta)$ is a strict uniform measure equivalence coupling. 
\end{proof}

As a consequence of the proof just given, we obtain the following slightly more specific statement.
\begin{por}
If $G$ and $H$ are coarsely equivalent, amenable, {unimodular lcsc groups} then they admit a free, {Hausdorff}, lcsc topological coupling and any such coupling admits a Borel measure with respect to which it is a strict uniform measure coupling.
\end{por}

\def\cprime{$'$} \def\cprime{$'$}


\begin{thebibliography}{BdlHV08}

\bibitem[AEG94]{adams-elliot-giordano}
Scot Adams, George~A. Elliott, and Thierry Giordano.
\newblock Amenable actions of groups.
\newblock {\em Trans. Amer. Math. Soc.}, 344(2):803--822, 1994.

\bibitem[AS93]{adams-stuck}
Scot Adams and Garrett Stuck.
\newblock Splitting of nonnegatively curved leaves in minimal sets of foliations.
\newblock {\em Duke Math. J.}, 71(1):71--92, 1993.

\bibitem[BdlHV08]{BHV}
Bachir Bekka, Pierre de~la Harpe, and Alain Valette.
\newblock {\em Kazhdan's property ({T})}, volume~11 of {\em New Mathematical
  Monographs}.
\newblock Cambridge University Press, Cambridge, 2008.

\bibitem[BFS13]{BFS-integrable}
Uri Bader, Alex Furman, and Roman Sauer.
\newblock Integrable measure equivalence and rigidity of hyperbolic lattices.
\newblock {\em Invent. Math.}, 194(2):313--379, 2013.

\bibitem[BR16]{bader-rosendal}
Uri Bader and Christian Rosendal.
\newblock Coarse equivalence and topological couplings of locally compact
  groups.
\newblock {\em Geometriae Dedicata} 196(1):1--9, 2018.

\bibitem[BHI15]{bowen-hoff-ioana}
Lewis Bowen, Daniel Hoff and Adrian Ioana.
\newblock von Neumann's problem and extensions of non-amenable equivalence relations.
\newblock {\em Groups Geom. Dyn.}, 12(2):399--448, 2018

\bibitem[Car14]{carette-haagerup}
Mathieu Carette.
\newblock The {H}aagerup property is not invariant under quasi-isometry.
\newblock {\em Preprint}, 2014.
\newblock With an appendix by Sylvain Arnt, Thibault Pillon and Alain Valette.
  \htmladdnormallink{arXiv:1403.5446}{https://arxiv.org/abs/1403.5446}.

\bibitem[CdlH16a]{yves-book}
Yves Cornulier and Pierre de~la Harpe.
\newblock {\em Metric geometry of locally compact groups}, volume~25 of {\em
  EMS Tracts in Mathematics}.
\newblock European Mathematical Society (EMS), Z\"urich, 2016.

\bibitem[CdlH16b]{yves-survey}
Yves Cornulier and Pierre de~la Harpe.
\newblock Zooming in on the large-scale geometry of locally compact groups.
\newblock {\em Preprint}, 2016.
\newblock
  \htmladdnormallink{arXiv:1606.05539}{https://arxiv.org/abs/1606.05539}.

\bibitem[CFW81]{Connes-Feldman-Weiss}
Alain Connes, Jacob Feldman and Benjamin Weiss.
\newblock An amenable equivalence relation is generated by a single
  transformation.
\newblock {\em Ergodic Theory Dynamical Systems}, 1(4):431--450 (1982), 1981.

\bibitem[CLM16]{carderi-maitre}
Alessandro Carderi and Fran{\c{c}}ois Le~Ma{\^i}tre.
\newblock Orbit full groups for locally compact groups.
\newblock {\em Trans. Amer. Math. Soc},  370(2018):2321-2349, 2016.

\bibitem[DL14]{deprez-li-ME}
Steven Deprez and Kang Li.
\newblock Permanence properties of property A and coarse embeddability for
  locally compact groups.
\newblock {\em Preprint}, 2014.
\newblock \htmladdnormallink{arXiv:1403.7111}{https://arxiv.org/abs/1403.7111}.

\bibitem[DT16]{das-tessera-integrable-me}
Kajal Das and Romain Tessera.
\newblock Integrable measure equivalence and the central extension of surface
  groups.
\newblock {\em Groups Geom. Dyn.}, 10(3):965--983, 2016.

\bibitem[Dye59]{Dye-mp-trans-I}
Henry A. Dye.
\newblock On groups of measure preserving transformation. {I}.
\newblock {\em Amer. J. Math.}, 81:119--159, 1959.

\bibitem[Dye63]{Dye-mp-trans-II}
Henry A. Dye.
\newblock On groups of measure preserving transformations. {II}.
\newblock {\em Amer. J. Math.}, 85:551--576, 1963.

\bibitem[FHM78]{feldman-hahn-moore}
Jacob Feldman, Peter Hahn, and Calvin~C. Moore.
\newblock Orbit structure and countable sections for actions of continuous
  groups.
\newblock {\em Adv. in Math.}, 28(3):186--230, 1978.

\bibitem[FM77]{FM1}
Jacob Feldman and Calvin~C. Moore.
\newblock Ergodic equivalence relations, cohomology, and von {N}eumann
  algebras. {I}.
\newblock {\em Trans. Amer. Math. Soc.}, 234(2):289--324, 1977.

\bibitem[For74]{Forrest}
Peter Forrest.
\newblock On the virtual groups defined by ergodic actions of {$R^{n}$} and
  {${\bf Z}^{n}$}.
\newblock {\em Advances in Math.}, 14:271--308, 1974.

\bibitem[Fur99a]{furman-gromovs-measure-equivalence}
Alex Furman.
\newblock Gromov's measure equivalence and rigidity of higher rank lattices.
\newblock {\em Ann. of Math. (2)}, 150(3):1059--1081, 1999.

\bibitem[Fur99b]{Furman-OE-rigidity}
Alex Furman.
\newblock Orbit equivalence rigidity.
\newblock {\em Ann. of Math. (2)}, 150(3):1083--1108, 1999.

\bibitem[Gab02]{gaboriau}
  Damien Gaboriau.
  \newblock On orbit equivalence of measure preserving actions.
  \newblock In {\em Rigidity in dynamics and geometry}, Proceedings of the Programme on Ergodic Theory, Geometric Rigidity and Number Theory held in Cambridge, January 5–July 7, 2000.
  \newblock Springer, Berlin, 2002.

\bibitem[GdlH15]{grigorchuk-de-la-harpe}
Rostislav Grigorchuk and Pierre de~la Harpe.
\newblock Amenability and ergodic properties of topological groups: from Bogolyubov onwards.
\newblock In {\em Groups, graphs and random walks}, volume 436 of {\em London Math. Soc. Lecture Note Ser.}
\newblock Cambridge Univ. Press, Cambridge, 2017.
  
\bibitem[Gro93]{gromov-asymptotic-invariants}
Mikhael Gromov.
\newblock Asymptotic invariants of infinite groups.
\newblock In {\em Geometric group theory, {V}ol.\ 2 ({S}ussex, 1991)}, volume
182 of {\em London Math. Soc. Lecture Note Ser.}
\newblock Cambridge University Press, Cambridge, 1993.

\bibitem[Hah78]{hahn-haar-measure-groupoids}
Peter Hahn.
\newblock Haar measure for measure groupoids.
\newblock {\em Trans. Amer. Math. Soc.}, 242:1--33, 1978.

\bibitem[Kec95]{kechris-book}
Alexander~S. Kechris.
\newblock {\em Classical descriptive set theory}, volume 156 of {\em Graduate
  Texts in Mathematics}.
\newblock Springer-Verlag, New York, 1995.

\bibitem[KPV15]{KPV}
David Kyed, Henrik~D. Petersen, and Stefaan Vaes.
\newblock {$L^2$}-{B}etti numbers of locally compact groups and their cross
  section equivalence relations.
\newblock {\em Trans. Amer. Math. Soc.}, 367(7):4917--4956, 2015.


\bibitem[KKR18]{KKR18}
Juhani Koivisto, David Kyed, and Sven Raum.
\newblock Measure equivalence for non-unimodular groups.
\newblock {\em Preprint}, 2018.
\newblock
\htmladdnormallink{arXiv:1805.09063}{https://arxiv.org/abs/1805.09063}.



\bibitem[Mac49]{mackey-stone-von-neumann}
George~W. Mackey.
\newblock A theorem of {S}tone and von {N}eumann.
\newblock {\em Duke Math. J.}, 16:313--326, 1949.

\bibitem[Mac57]{mackey-borel-structure}
George~W. Mackey.
\newblock Borel structure in groups and their duals.
\newblock {\em Trans. Amer. Math. Soc.}, 85:134--165, 1957.

\bibitem[Mac62]{mackey-point-realization}
George~W. Mackey.
\newblock Point realizations of transformation groups.
\newblock {\em Illinois J. Math.}, 6:327--335, 1962.

\bibitem[MBT17]{matte-bon-tsankov}
Nicol{\'a}s Matte~Bon and Todor Tsankov.
\newblock Realizing uniformly recurrent subgroups.
\newblock {\em To appear in Ergod. Th. Dynam. Sys.}, 2017.
\newblock
  \htmladdnormallink{doi.org/10.1017/etds.2018.47}{https://doi.org/10.1017/etds.2018.47}.

\bibitem[Mic66]{Michael2}
Ernest Michael.
\newblock {$\aleph _{0}$}-spaces.
\newblock {\em J. Math. Mech.}, 15:983--1002, 1966.

\bibitem[MRV13]{stefaan-sven-niels}
Niels Meesschaert, Sven Raum, and Stefaan Vaes.
\newblock Stable orbit equivalence of {B}ernoulli actions of free groups and
  isomorphism of some of their factor actions.
\newblock {\em Expo. Math.}, 31(3):274--294, 2013.

\bibitem[OW80]{ornstein-weiss}
Donald~S. Ornstein and Benjamin Weiss.
\newblock Ergodic theory of amenable group actions. {I}. {T}he {R}ohlin lemma.
\newblock {\em Bull. Amer. Math. Soc. (N.S.)}, 2(1):161--164, 1980.

\bibitem[Pet38]{pettis}
 Billy J.~Pettis. 
\newblock On integration in vector spaces
\newblock {\em Trans. Amer. Math. Soc.}, 44(2):277--304, 1938

\bibitem[Ram71]{ramsey}
Arlan Ramsay.
\newblock Virtual groups and group actions.
\newblock {\em Advances in Math.}, 6:253--322, 1971.

\bibitem[Roe03]{roe-lecture-notes}
John Roe.
\newblock {\em Lectures on coarse geometry}, volume~31 of {\em University
  Lecture Series}.
\newblock American Mathematical Society, Providence, RI, 2003.

\bibitem[Sau02]{sauer-thesis}
Roman Sauer.
\newblock {$L^2$}-invariants of groups and discrete measured groupoids.
\newblock {\em Ph.D. Thesis}, 2002.
\newblock
  \htmladdnormallink{https://repositorium.uni-muenster.de/document/miami/cf45a284-433f-46e1-82a9-db806727a376/sauer.pdf}{https://repositorium.uni-muenster.de/document/miami/cf45a284-433f-46e1-82a9-db806727a376/sauer.pdf}.

\bibitem[Sha00]{shalom-rigidity}
Yehuda Shalom.
\newblock Rigidity of commensurators and irreducible lattices.
\newblock {\em Invent. Math.}, 141(1):1--54, 2000.

\bibitem[Sha04]{shalom-harmonic-analysis}
Yehuda Shalom.
\newblock Harmonic analysis, cohomology, and the large-scale geometry of
  amenable groups.
\newblock {\em Acta Math.}, 192(2):119--185, 2004.

\bibitem[Str74]{Struble}
Raimond~A. Struble.
\newblock Metrics in locally compact groups
\newblock {\em Compositio Mathematica}, 28(3):217--222, 1974.


\bibitem[Zim84]{zimmer-book}
Robert~J. Zimmer.
\newblock {\em Ergodic theory and semisimple groups}, volume~81 of {\em
  Monographs in Mathematics}.
\newblock Birkh\"auser Verlag, Basel, 1984.

\end{thebibliography}
\end{document}